\newtheorem{lemma}{Lemma}[section]
\newtheorem{proposition}[lemma]{Proposition}
\newtheorem{corollary}[lemma]{Corollary}
\newtheorem{theorem}[lemma]{Theorem}
\title{A comparison of minimal systems for constructive analysis}
\author{Garyfallia Vafeiadou}
\thanks{The content of this paper (except the results of 8.1)
is from \cite{VafeiadouPhD}, written under the supervision of
Prof. Joan Rand Moschovakis, to whom the author is deeply grateful
for the invaluable privilege to learn from her, for the encouragement,
and for her numerous suggestions and comments on this work. The author
also thanks Iris Loeb for some motivating questions.}
\begin{document}

\begin{abstract}

We establish a precise relation between the \emph{minimal system
of analysis} {\bf M}, a subsystem of the formal axiomatic system
of intuitionistic analysis {\bf FIM} of S. C. Kleene, and
\emph{elementary analysis} {\bf EL} of A. S. Troelstra, two weak
formal systems of two-sorted intuitionistic arithmetic, both
widely used as basis for (various forms of) constructive analysis.
We show that {\bf EL} is weaker than {\bf M}, by introducing an
axiom schema $\mathrm{CF\!_d}$ asserting that every decidable (in
the sense that it satisfies the law of the excluded middle)
predicate of natural numbers has a characteristic function. As it
turns out, $\mathrm{CF\!_d}$ captures the essential difference of
the two minimal theories. Moreover, we obtain the conservativity
of {\bf M} over first-order intuitionistic arithmetic, and the
eliminability of Church's $\lambda$ from {\bf EL} by modifying the
proof of J. Rand Moschovakis of the corresponding result for {\bf
M}. We also show that {\bf EL}  and the formal theory {\bf BIM}
(\emph{Basic Intuitionistic Mathematics}) of W. Veldman are
essentially equivalent, and we compare some more systems of
two-sorted intuitionistic arithmetic.

\end{abstract}

\maketitle

\section*{Introduction}

The investigation in constructive analysis is carried out in a
multitude of formal or informal languages and systems, whose
relationships remain in many aspects unclear.  Starting an attempt
to elucidate the relations among the various formalisms used, we
compare particular formal systems of intuitionistic two-sorted
arithmetic that are neutral and formalize the common part of the
main varieties  of constructive analysis (corresponding to
Brouwer's intuitionism, Markov's Russian recursive mathematics and
Bishop's constructivism), and of classical analysis also.

Despite the different ways in which it can be understood,
constructivity affects the logic inherent in mathematical
reasoning,  as L. E. J. Brouwer, the founder of intuitionism,
realized and showed, leading to the rejection of the unrestricted
use of the law of the excluded middle. So all the systems that we
study  are based on intuitionistic (predicate) logic. In addition,
they have the following common features: they are formulated in
two-sorted languages, with variables for natural numbers and
one-place number-theoretic functions (in contrast to set variables
used for the second sort in the classical case, \cite{Simpson}),
and they have constants for (different selections of) only
primitive recursive functions and functionals, and equality
between natural numbers, with equality between functions defined
extensionally. $\lambda$-abstraction is included in some of them.
The function existence principles assumed are all weak and do not
involve real choice.

The differences in the languages as well as the interplay between
the possibilities provided by the languages and the assumed (if
any) function existence principles do not allow, in most cases, to
determine directly how these systems relate to each other. We
first obtain a precise relationship between the systems {\bf M}
and {\bf EL}, and then apply similar arguments to get comparisons
in some other cases.

\section{The formal systems {\bf M} and {\bf EL}}

\subsubsection{}

Starting with Heyting,  intuitionistic logic and arithmetic were
formalized as subsystems  of the corresponding classical formal
theories  (see \cite{JRM2009}). On the contrary, Heyting's
formalization of Brouwer's set theory (the part of intuitionistic
mathematics concerning the continuum and the real numbers) failed
to allow comparison with classical mathematics. S. C. Kleene and
R. E. Vesley  (\cite{FIM}) formalized large parts of
intuitionistic mathematics, corresponding to mathematical
analysis, in the formal system {\bf FIM} whose language is
suitable for classical analysis also, making such a comparison
possible.

The \emph{minimal system of analysis} {\bf M}, identified in
\cite{JRMPhD}, is a subsystem of {\bf FIM} consisting of the
primitive recursive arithmetical basis of {\bf FIM} and a
countable function comprehension principle. Within it S. C. Kleene
developed formally, with great detail, the theory of recursive
partial functionals (\cite{Kleene1969}). The corresponding
informal theory $\mathcal{M}$ is used in \cite{JRM2003}. We also
note that the system {\bf WKV} (Weak Kleene-Vesley) used in the
constructive reverse mathematics paper \cite{Loeb} is a
``minimalistic'' variant of {\bf M}.

The system of \emph{elementary analysis} {\bf EL}
(\cite{Troelstra1973, TvDI}) has been developed mostly by A. S.
Troelstra, to serve as a formal basis for intuitionistic analysis.
It differs from {\bf M} in its arithmetical  basis and in the
function existence principle it assumes. {\bf EL} is used in
recent work for the formalization of Bishop's constructive
analysis, especially in relation to the program of Constructive
Reverse Mathematics (see for example \cite{Berger} and
\cite{Ishihara}).

\subsubsection{}

{\bf M} and {\bf EL} have many similarities: all the common
features mentioned in the introductory paragraph, including
$\lambda$-abstraction, and also the possibility of definition by
primitive recursion, although with different justification.

Their differences are of two kinds. First, they have differences
in their languages. {\bf M} has only finitely many function and
functional constants, following the paradigm of (the usual
presentation of) Peano arithmetic, while {\bf EL} has infinitely
many function constants (not including the functional constants of
{\bf M}), as it extends (first-order intuitionistic) Heyting
arithmetic {\bf HA} (as presented in \cite{TvDI}), which contains
primitive recursive arithmetic {\bf PRA}; {\bf EL} has also a
recursor functional (not included in {\bf M}). Second, they assume
different function existence principles: {\bf M} assumes the axiom
schema $\mathrm{AC_{00}!}$ of countable function comprehension,
while {\bf EL} assumes the axiom schema
$\mathrm{QF\text{-}AC_{00}}$ of countable choice for
quantifier-free formulas, which is a consequence of
$\mathrm{AC_{00}!}$.

The two systems were considered more or less equivalent, but their
exact relation was unknown.\footnote{It is interesting that the
formal development of elementary recursion theory in {\bf EL} by
A. S. Troelstra ``relies heavily'' on the above mentioned work
\cite{Kleene1969}, see \cite{Troelstra1973}, p. 73.} We have found
that {\bf EL} is essentially weaker than {\bf M}, and that their
difference is captured  by the  function existence principle
expressed by an axiom schema which we call $\mathrm{CF\!_d}$,
another consequence of $\mathrm{AC_{00}!}$. After identifying
$\mathrm{CF\!_d}$, we show that {\bf EL} + $\mathrm{CF\!_d}$
entails $\mathrm{AC_{00}!}$, and that {\bf EL} does not entail
$\mathrm{CF\!_d}$. These results suggest that the formal system
{\bf EL} + $\mathrm{CF\!_d}$ is essentially equivalent to {\bf M},
while {\bf EL} is weaker than {\bf M}. In order to establish these
suggested relationships, we have to overcome the differences of
the languages. So we extend both systems up to a common language,
and we show that the corresponding extensions of {\bf M} and  {\bf
EL} + $\mathrm{CF\!_d}$  are conservative, in fact definitional,
and coincide up to trivial notational differences; so we conclude
that the suggested relationships hold indeed. We also show that,
like {\bf EL}, {\bf M} is conservative over first-order
intuitionistic arithmetic and that Church's $\lambda$ is
eliminable from {\bf EL}, by a slight modification of the
corresponding proof for {\bf M} given in \cite{JRMPhD}.

\subsection{Language and underlying logic}

\subsubsection{}

Both systems are based on two-sorted intuitionistic predicate
logic with equality, with number and function variables. Their
languages $\mathcal{L}(\mathrm{{\bf M})}$ and
$\mathcal{L}(\mathrm{{\bf EL})}$ have a common part which
includes: the logical symbols $\rightarrow ,~\& ~ ,\vee ,\neg
,\forall , \exists\,$, commas and parentheses as punctuation
symbols, number variables $\mathrm{x, y, z, \dots }$ intended to
range over natural numbers, and function variables $\alpha ,\beta
,\gamma ,\dots$ intended to range over one-place number-theoretic
functions (or choice sequences in the case of intuitionistic
analysis). The set of individual constants, predicate and function
and functional symbols of each language extends in different ways
a common part, and there are common and different formation rules
for the terms (type-0 terms, expressions for natural numbers) and
the functors (type-1 terms, expressions for one-place
number-theoretic functions (or choice sequences)); these will be
included in the description of the non-logical part of the
formalisms. The number equality predicate symbol = is contained in
both languages.

\subsubsection{}

The logical axioms and rules can be introduced in various ways,
for example in a natural deduction or Hilbert-type style. We will
base our treatment on the formal system of \cite{FIM} which
extends that of \cite{IM} (on p. 13 of \cite{FIM} and pp. 82 and
101 of \cite{IM}). The corresponding system of classical logic is
obtained by replacing axiom schema $\mathrm{\neg A\rightarrow
(A\rightarrow B)}$ of ``ex falso sequitur quodlibet'' by the
schema $\mathrm{\neg \neg A \rightarrow A}$ of double negation
elimination or, equivalently, by adding the schema $\mathrm{A \vee
\neg A}$ of the excluded middle.

\subsubsection{}

Number equality (between terms) is introduced as a primitive which
in {\bf HA} and {\bf EL} satisfies (the universal closures of) the
axiom $\;\mathrm{REFL \;\; x = x}\;$ and the replacement schema
$\;\mathrm{REPL\;\; A(x) ~\&~ x = y \rightarrow A(y)}$, where
$\mathrm{A(z)}$ is a formula and x, y are distinct number
variables free for z in $\mathrm{A(z)}$. In the version of
first-order arithmetic of \cite{IM} which we call $\mathrm{\bf
IA_0}$ and upon which {\bf M} is based, as well as in {\bf M},
these are reduced to a finite number of simple axioms, from which
REFL and the schema corresponding to REPL are provable.

Equality between functors, in all the systems that we consider, is
defined extensionally and is introduced by the abbreviation
$\mathrm{u = v \equiv \forall x \,(u) (x) = (v) (x)}$, where u, v
are functors, x is a number variable not free in u or v, (u)(x)
and (v)(x) are terms obtained by function application, according
to the formation rules given below. Details on the treatment of
equality will be given in a later section.

\subsection{Underlying arithmetic}

\subsubsection{}

The systems {\bf M} and {\bf EL} are based on weak systems of
two-sorted intuitionistic arithmetic, which we call $\mathrm{\bf
IA_1}$ and $\mathrm{\bf HA_1}$, respectively. Both weak theories
are based on the two-sorted intuitionistic predicate logic that we
described; they extend $\mathrm{\bf IA_0}$ and $\mathrm{\bf HA}$,
respectively. The difference between $\mathrm{\bf IA_0}$ and
$\mathrm{\bf HA}$ is that the first one has only finitely many
primitives, while the second has symbols for all the primitive
recursive functions. But it is well-known (see \cite{IM}, $\S$74,
for a detailed proof) that adding function symbols for primitive
recursive functions to $\mathrm{\bf IA_0}$ leads to definitional,
and so inessential extensions.

\subsubsection{}

{\bf IA$_0$} is based on first-order intuitionistic predicate
logic (with only number variables). Besides =, its language
$\mathrm{\mathcal{L}({\bf IA_0})}$ contains the constants 0
(zero), $'\,$ (successor),  + (addition) and $\,\cdot\,$
(multiplication).

Terms are defined inductively, as usual: the constant 0 and the
number variables are terms, and if s, t are terms, then
$\mathrm{(s)',\, (s)+(t),\, (s) \cdot (t)}$ are terms.

The prime formulas are the equalities between terms: if s, t are
terms, then $\mathrm{(s)=(t)}$ is a (prime) formula.

The mathematical axioms of {\bf IA$_0$} are the axioms for
$=,\,0,\, ', +, \cdot \,$ (on p. 82 of \cite{IM}) and the axiom
schema of mathematical induction
$$
\mathrm{IND\;\;\;\;\;\;\;A(0)~\&~ \forall x \,(A(x)\rightarrow
 A(x'))\rightarrow A(x)}.
$$

Classical first-order Peano arithmetic {\bf PA} is {\bf IA$_0$} +
$\mathrm{\neg \neg A \rightarrow A}$.

\subsubsection{}

Heyting arithmetic {\bf HA} differs from {\bf IA$_0$} in the set
of function constants it contains. Its language
$\mathrm{\mathcal{L}({\bf HA})}$ has the constant 0 (zero)  and
countably infinitely many constants $\mathrm{h_0, h_1, h_2, \ldots
}$, including S (successor), for all the primitive recursive
functions, more precisely a function constant for each primitive
recursive description.

The term formation rules are adapted accordingly: the constant 0
and the number variables are terms, and  if $\mathrm{t_1, \ldots
,t_{\textit{k}}}\,$  are terms and h a $k$-place function
constant, then $\mathrm{h(t_1, \ldots ,t_{\textit{k}})}$ is a
term.

The mathematical axioms of  {\bf HA}, besides the equality axioms
given by REFL and REPL, are the axiom schema of induction IND, the
axiom $\mathrm{\neg S(0) = 0}$ and defining axioms for the
function constants, which consist of the equations expressing the
corresponding primitive recursive descriptions.

\subsubsection{}

The language  $\mathrm{\mathcal{L}({\bf IA_1})}$ of {\bf IA$_1$}
(we note that $\mathrm{\mathcal{L}({\bf IA_1})}$ is
$\mathrm{\mathcal{L}({\bf M})}$) extends $\mathrm{\mathcal{L}({\bf
IA_0})}$. It has the (finitely many) function and functional
constants $\mathrm{f_0,\ldots , f_\textit{p}}$ given in a list
below, where each f$_i$ ($i=0, \ldots ,p$) has $k_i$ number
arguments and $l_i$ function arguments. All of them express
functions primitive recursive in their arguments. According to the
needs of the development of the theory, a different selection of
which function constants are included in the alphabet may be done,
in agreement with the intuitionistic view that no formal system
can exhaust the possibilities of  mathematical activity. The
particular formal system that we are considering, {\bf M},
contains the 27 function(al)s contained in the list; the system of
\cite{FIM} contains the first  25 of them, the last two have been
added in \cite{Kleene1969}. There are also parentheses serving as
constant for function application, and Church's $\lambda$ for
$\lambda$-abstraction.

The terms  and functors of {\bf IA$_1$} are defined by
simultaneous induction: 0 and the number variables are terms; the
function variables and each constant f$_i$ with $k_i$ = 1, $l_i$ =
0, are functors; if $\mathrm{t_1,\ldots
,t_{\textit{k}_\textit{i}}}$ are terms and $\mathrm{u_1,\ldots
,u_{\textit{l}_\textit{i}}}$ functors, then
$\mathrm{f_\textit{i}(t_1,\ldots
,t_{\textit{k}_\textit{i}},u_1,\ldots
,u_{\textit{l}_\textit{i}})}$ is a term; if u is a functor and t a
term, then (u)(t) is a term; if x is a number variable and t a
term, then $\lambda$x(t) (we also write $\lambda$x.t) is a
functor.

The prime formulas of {\bf IA$_1$} are the equalities
$\mathrm{(s)=(t)}$ where s, t are terms.

The mathematical axioms of {\bf IA$_1$} are: the axioms of {\bf
~IA$_0$} for $=,0,\,'\,,+, \cdot $, the axiom schema IND for
$\mathrm{\mathcal{L}({\bf IA_1})}$,  the equations expressing the
primitive recursive definitions of the additional function(al)
constants f$_4$ - f$_{26}$ [they are of the following forms,
corresponding to explicit definition and definition by primitive
recursion:
\begin{enumerate}
\item{$\;\;\; \mathrm{f_{\textit{i}}(y,a,\alpha )
 =  p(y,a,\alpha )}$,}
\item{$\left\{
\begin{array}{ll}
    \mathrm{f_{\textit{i}}(0,a,\alpha ) = q(a,\alpha )},\\
\mathrm{f_{\textit{i}}(y',a,\alpha ) =
r(y,f_{\textit{i}}(y,a,\alpha ),a,\alpha ),}
\end{array}\right.$ }
\end{enumerate}
where  $\mathrm{p(y,a,\alpha )}$, $\mathrm{q(a,\alpha )}$,
$\mathrm{r(y,z,a,\alpha )}$ are terms containing only the distinct
variables shown and only function constants from $\mathrm{f_0,
\ldots ,f_{\textit{i}-1}}$, and $\mathrm{y,a, \alpha }$ are free
for z in $\mathrm{r(y,z,a,\alpha )}$], the equality axiom for
function variables $\mathrm{x=y \rightarrow
 \alpha (x)=\alpha (y)}$,
and the axiom schema of $\lambda$-conversion $\mathrm{(\lambda
x.t(x))(s)=t(s)}$, where t(x) is any term and s is any term free
for x in t(x).

We next give the complete list of the function(al) constants of
$\mathrm{\mathcal{L}({\bf IA_1})}$ with their defining axioms,
where a, b are number variables.

\begin{enumerate}

\item[\boldmath$\cdot\;$\unboldmath f$_{0\;\,}$]
$\mathrm{\equiv \; 0}, \hfill k_0=0,\:l_0=0,\;$
\vskip 0.1cm

\item[\boldmath$\cdot\;$\unboldmath f$_{1\;\,}$]
$\mathrm{\neg\, a'=0,\:a=b\rightarrow a'=b',\;a'=b'\rightarrow
a=b}, \hfill k_1=1,\:l_1=0,\;$
\vskip 0.1cm

\item[\boldmath$\cdot\;$\unboldmath f$_{2\;\,}$]
$\mathrm{ a+0=a,\:a+b'=(a+b)'}, \hfill k_2=2,\:l_2=0,\;$
\vskip 0.1cm

\item[\boldmath$\cdot\;$\unboldmath f$_{3\;\,}$]
$\mathrm{a\cdot 0=0,\:a\cdot b'=a\cdot b+a}, \hfill
k_3=2,\:l_3=0,\;$
\vskip 0.1cm

\item[\boldmath$\cdot\;$\unboldmath f$_{4\;\,}$]
$\mathrm{a^0=1,\:a^{b'}\!=\,a^b\cdot a}, \hfill
k_4=2,\:l_4=0,\;$
\vskip 0.1cm

\item[\boldmath$\cdot\;$\unboldmath f$_{5\;\,}$]
$\mathrm{0!=1,\:(a')!=(a!)\cdot a'}, \hfill
k_5=1,\:l_5=0,\;$
\vskip 0.1cm

\item[\boldmath$\cdot\;$\unboldmath f$_{6\;\,}$]
$\mathrm{ pd(0)=0,\:pd(a')=a}, \hfill k_6=1,\:l_6=0,\;$ \vskip
0.1cm

\item[\boldmath$\cdot\;$\unboldmath f$_{7\;\,}$]
$\mathrm{a\-0=a,\:a\-b'=pd(a\-b)},\hfill
k_7=2,\:l_7=0,\;$
\vskip 0.1cm

\item[\boldmath$\cdot\;$\unboldmath f$_{8\;\,}$]
$\mathrm{min(a,b)=b\-(b\-a)},\hfill k_8=2,\:l_8=0,\;$ \vskip
0.1cm

\item[\boldmath$\cdot\;$\unboldmath f$_{9\;\,}$]
$\mathrm{max(a,b)=(a\-b)+b},\hfill k_9=2,\:l_9=0,\;$ \vskip
0.1cm

\item[\boldmath$\cdot\;$\unboldmath f$_{10}$]
$\mathrm{\overline{sg}(0)=1,\:\overline{sg}(a')=0}, \hfill
k_{10}\!=\!1,\:l_{10}\!=\!0,\;$ \vskip 0.1cm

\item[\boldmath$\cdot\;$\unboldmath f$_{11}$]
$\mathrm{sg(0)=0,\:sg(a')=1}, \hfill
k_{11}\!=\!1,\:l_{11}\!=\!0,\;$ \vskip 0.1cm

\item[\boldmath$\cdot\;$\unboldmath f$_{12}$]
$\mathrm{|a-b|=(a\-b)+(b\-a)},\hfill
k_{12}\!=\!2,\:l_{12}\!=\!0,\;$ \vskip 0.1cm

\item[\boldmath$\cdot\;$\unboldmath f$_{13}$]
$\mathrm{ rm(0,b)=0,}$

\item[] $\mathrm{rm(a',b)=(rm(a,b))'\cdot
 sg\,|b-(rm(a,b))'|},\hfill k_{13}\!=\!2,\:l_{13}\!=\!0,\;$
\vskip 0.1cm

\item[\boldmath$\cdot\;$\unboldmath f$_{14}$]
  $\mathrm{[\,0 / \, b\,]\,=0},$

\item[] $\mathrm{[\,a'/  \, b\,]\,=\,[\,a / \,
  b\,]\,+\overline{sg}\,|\,b-(rm(a,b))'\,|},\hfill
k_{14}\!=\!2,\:l_{14}\!=\!0,\;$
\vskip 0.1cm

\item[\boldmath$\cdot\;$\unboldmath f$_{15}$]
$\mathrm{f_{15}(0,\alpha )=0,\:f_{15}(z',\alpha )
 =f_{15}(z,\alpha )+\alpha (z)}, \hfill
 k_{15}\!=\!1,\:l_{15}\!=\!1,\;$

\item[] [\:alternative notation:
\;$\mathrm{\Sigma_{y<z}\alpha (y)\,]}$
\vskip 0.1cm

\item[\boldmath$\cdot\;$\unboldmath f$_{16}$]
$\mathrm{f_{16}(0,\alpha )=1,\:f_{16}(z',\alpha )=
  f_{16}(z,\alpha )\cdot\alpha (z)}, \hfill
  k_{16}\!=\!1,\:l_{16}\!=\!1,\;$

\item[] [\:alternative notation: \;$\mathrm
  {\Pi_{y<z}\alpha (y)\,}$] \vskip 0.1cm

\item[\boldmath$\cdot\;$\unboldmath f$_{17}$]
$\mathrm{f_{17}(0,\alpha )\!=\! \alpha (0),\:f_{17}(z',\alpha
  )\!=\! f_8(f_{17}(z,\alpha ),\alpha (z'))}, \hfill
  k_{17}\!=\!1,\:l_{17}\!=\!1,\;$

\item[] [\:alternative notation: \;$\mathrm{ min_{y\leq
  z}\alpha (y)\,]}$ \vskip 0.1cm

\item[\boldmath$\cdot\;$\unboldmath f$_{18}$]
$\mathrm{f_{18}(0,\alpha )\!=\!\alpha (0),\:f_{18}(z',\alpha
  )\!=\! f_9(f_{18}(z,\alpha ),\alpha (z'))}, \hfill
  k_{18}\!=\!1,\:l_{18}\!=\!1,\;$

\item[] [\:alternative notation: \;$\mathrm{ max_{y\leq
  z} \alpha (y)}\,]$\vskip 0.1cm

\item[\boldmath$\cdot\;$\unboldmath f$_{19}$]
$\mathrm{p_0=2,\, p_{i'}=\mu b_{b<p_i!+2}\,[\,p_i<b ~\&~
  Pr(b)\,]},\hfill  k_{19}\!=\!1,\:l_{19}\!=\!0,\;$\vskip
  0.2cm

\item[\boldmath$\cdot\;$\unboldmath f$_{20}$]
$\mathrm{(a)_i=\mu x_{x<a}\,[\,p_i^x\mid a ~\&~ \neg \,
 p_i^{x'}\mid a\,]}, \hfill
k_{20}\!=\!2,\:l_{20}\!=\!0,\;$\vskip
  0.2cm

\item[\boldmath$\cdot\;$\unboldmath f$_{21}$]
$\mathrm{lh(a)=\Sigma_{i<a}\,sg((a)_i)}, \hfill
  k_{21}\!=\!1,\: l_{21}\!=\!0,\;$\vskip 0.1cm

\item[\boldmath$\cdot\;$\unboldmath f$_{22}$]
$\mathrm{ a*b=a\cdot\Pi_{i<lh(b)}p_{lh(a)+i}^{(b)_i}}, \hfill
  k_{22}\!=\!2,\;l_{22}\!=\!0,\;$\vskip 0.1cm

\item[\boldmath$\cdot\;$\unboldmath f$_{23}$]
$\mathrm{\overline{\alpha }(x)=\Pi_{i<x}p_i^{\alpha (i)+1}},
  \hfill k_{23}\!=\!1,\:l_{23}\!=\!1,\;$\vskip 0.1cm

\item[\boldmath$\cdot\;$\unboldmath f$_{24}$]
$\mathrm{\widetilde{\alpha }(x)=\Pi_{i<x}p_i^{\alpha (i)}},
  \hfill k_{24}\!=\!1,\:l_{24}\!=\!1,\;$\vskip 0.1cm

\item[\boldmath$\cdot\;$\unboldmath f$_{25}$]
$\mathrm{a\circ b=\Pi_{i<max(a,b)}p_i^{max((a)_i,(b)_i)}},
  \hfill k_{25}\!=\!2,\:l_{25}\!=\!0,\;$\vskip 0.1cm

\item[\boldmath$\cdot\;$\unboldmath f$_{26}$]
$\mathrm{ccp(0)=1,\;\; ccp(y')=ccp(y)\cdot
  p_y^{r(y,\,ccp(y))}}, \hfill
  k_{26}\!=\!1,\:l_{26}\!=\!0,\;$\vskip 0.1cm

\end{enumerate}
   \vskip 0.1cm
where the following abbreviations are used:
 \begin{enumerate}
\item[\boldmath $\cdot$\unboldmath ]
$\mathrm{a<b} \equiv \mathrm{a'\- b=0}$; $\mathrm{a \leq
b}\equiv\mathrm{a<b \vee a = b}$; $\mathrm{a \mid
b}\equiv\mathrm{sg(rm(b,a))=0}$;
\item[\boldmath $\cdot$\unboldmath ]
$\mathrm{\mu
y_{y<z}\,R(y)}\equiv\mathrm{\Sigma_{x<z}\,\Pi_{y<x'}\,r(y)}$,
where r(y) is a term with \\ $\mathrm{\vdash R(y)
\leftrightarrow r(y)=0}$, $\mathrm{\vdash r(y) \leq 1}$ and x
not free in r(y);
\item[\boldmath $\cdot$\unboldmath ]
Pr(a) is a prime formula expressing that a is a prime number;
\item[\boldmath $\cdot$\unboldmath ]
 the term r(y,z) in $\mathrm{f_{26}}$ is constructed in
\cite{Kleene1969}, where computation tree numbers are
introduced to code partial recursive derivations.
\end{enumerate}

\textsc{Notation}. (i) In all the systems that we consider we will
use, unless otherwise stated, the same abbreviations for $<$ etc.
and the same symbols for (the same) functions and functionals as
in {\bf IA$_1$}.

(ii)  As in \cite{FIM}, we will use the following abbreviation
representing a primitive recursive coding of finite sequences of
natural numbers: for each $k\geq 0$, $\mathrm{\langle x_0, \ldots
,x_\textit{k}\rangle \equiv {\bf p}_0^{x_0} \cdot \ldots \cdot
{\bf p_{\textit{k}}}^{x_\textit{k}}},$ where $\mathrm{{\bf
p_{\textit{i}}}}$ is the numeral for the $i$-th prime $p_i$.

\subsubsection{}

The language $\mathrm{\mathcal{L}({\bf HA_1})}$ of {\bf HA$_1$}
(note that $\mathrm{\mathcal{L}({\bf HA_1})}$ is
$\mathrm{\mathcal{L}({\bf EL})}$) extends
$\mathrm{\mathcal{L}({\bf HA})}$. In addition to the function
constants of $\mathrm{\mathcal{L}({\bf HA})}$, there are
parentheses for function application and Church's $\lambda$ for
$\lambda$-abstraction, as in $\mathrm{\mathcal{L}({\bf IA_1})}$.
There is also a functional constant rec expressing the recursor
functional, which corresponds to definition by the schema of
primitive recursion.

The terms and functors of {\bf HA$_1$} are defined as in {\bf
IA$_1$}, with an additional term formation rule: if $\mathrm{t,
s}$ are terms and u a functor, then $\mathrm{rec(t, u, s)}$ is a
term.

The mathematical axioms of {\bf HA$_1$} are: the axioms of {\bf
HA} with IND and REPL extended to $\mathrm{\mathcal{L}({\bf
HA_1})}$, $\lambda$-conversion, and the following axioms for the
recursor constant rec:
$$
\mathrm{REC}\;\;\left\{
\begin{array}{ll}
    \mathrm{rec (t, u, 0)=t,} \\
    \mathrm{rec (t, u, S(s))= u (\langle rec(t, u, s),s\rangle ),}
\end{array}\right.
$$
where t, s are terms and u a functor.\footnote{The original
 formulation of REC  uses a pairing function j
  \emph{onto} the natural numbers,
but as it is remarked in \cite{Troelstra1973}, 1.3.9, they ``might
have used Kleene's $\mathrm{2^x\cdot 3^y}$''.}

\subsection{Function existence principles}

\subsubsection{}

The unique existential number quantifier $\mathrm{\exists !y}$ is
used to express the notion ``there exists a unique y such that
...'' and it is introduced as an abbreviation:
$$
\mathrm{\exists !y B(y) \equiv \exists y \,[\,B(y)~\&~
\forall z (B(z) \rightarrow y = z)\,]\,}.
$$

\subsubsection{}
The \emph{minimal system of analysis} {\bf M} is the  theory {\bf
IA$_1$} + AC$_{00}$!, with
$$
\mathrm{AC_{00}!\;\;\;\forall x
\exists !y A(x,y) \rightarrow \exists \alpha \forall x A(x,\alpha
(x)),}
$$
where x and $\alpha$ are free for y in $\mathrm{A(x,y)}$ and
$\alpha$ does not occur free in $\mathrm{A(x,y)}$.

The schema of unique choice AC$_{00}$! expresses a countable
function comprehension principle. Because of the uniqueness
condition in the hypothesis, there is no real choice. With
classical logic, it is equivalent to AC$_{00}$ (just like
AC$_{00}$! but without the ! in the hypothesis), expressing a
countable numerical choice principle. Constructively, as it is
shown in \cite{Weinstein} with a highly non-trivial proof,
AC$_{00}$! is weaker than AC$_{00}$.\footnote{A constructively
equivalent definition of $\mathrm{\exists !y B(y)}$ is by
$\mathrm{ \exists y B(y) ~\&~ \forall y \forall z (B(y) ~\&~ B(z)
\rightarrow y = z)}$ where the second conjunct expresses ``at most
one'', for which Bishop constructivists use $\mathrm{\forall y
\forall z(y \neq z \rightarrow (\neg B(y) \vee \neg B(z)))}$,
which is classically but not constructively equivalent: consider
the formula $\mathrm{B(x)\equiv (x=0 ~\& ~ P) \vee (x=1 ~\&~\neg
P)}$, where P is any formula and $\mathrm{x}$ any variable not
occurring free in P (\cite{JRM-GV2012}). However, any of the
alternatives could be used in the formulation of AC$_{00}!$, since
under the assumption $\mathrm{\exists x B(x)}$ they become
constructively equivalent.}

\subsubsection{}

\emph{Elementary analysis} {\bf EL} is the theory {\bf HA$_1$} +
 QF-AC$_{00}$, with
$$
\mathrm{QF\text{-}AC_{00}\;\;\;\forall x \exists y A(x,y)
\rightarrow \exists \alpha \forall x A(x,\alpha (x)),}
$$
where $\mathrm{A(x,y)}$ is a quantifier-free formula, in which x
is free for y and  $\alpha$ does not occur.

The schema QF-AC$_{00}$ expresses a weak principle of countable
numerical choice, for quantifier-free formulas. This principle
does not involve real choice either, since the quantifier-free
formulas are decidable, and in this case, existence entails
constructively unique existence (of the least such number). For
these basic facts we refer to section 2 below.

\section{Unique existence and decidability}

\subsection{}

In intuitionistic arithmetic unique existence (of a natural number
satisfying a predicate) and decidability (of natural number
predicates) are closely related.\footnote{In the classical case
all these are trivialities, as natural number existence always
entails unique existence of a least witness and every predicate is
decidable.} As a consequence, the principles $\mathrm{AC_{00}!}$
and $\mathrm{QF\text{-}AC_{00}}$ are  related in a precise manner,
over any reasonable two-sorted intuitionistic arithmetic. We give
next some results, most of them well-known, that provide basic
facts about the two notions. Using them we will determine how
$\mathrm{AC_{00}!}$ and $\mathrm{QF\text{-}AC_{00}}$ relate to
each other. In the following {\bf S} is any of {\bf IA$_0$}, {\bf
HA}, {\bf IA$_1$} or {\bf HA$_1$}, and $\;\vdash \;$ denotes
provability in {\bf S}.

\begin{lemma}
$\mathrm{\vdash \forall x \forall y (x=y \vee \neg x=y)}$.
\end{lemma}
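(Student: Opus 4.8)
The plan is to prove the statement by a straightforward nested induction on the two number variables, using only the successor axioms and the reflexivity/replacement properties of $=$, so that the argument goes through uniformly in each of the four systems {\bf IA$_0$}, {\bf HA}, {\bf IA$_1$}, {\bf HA$_1$}. In particular I want the proof to live in the common fragment, with no appeal to any primitive recursive equality-test function that only the richer languages of {\bf HA} and {\bf HA$_1$} would provide.

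First I would record the symmetry and transitivity of $=$, which are derivable from REFL and REPL (and are among the provable consequences of the finite equality axioms of {\bf IA$_0$}): given $x = y$, instantiating REPL at the formula $z = x$ together with the instance $x = x$ of REFL yields $y = x$, and a similar instantiation gives transitivity. The only other ingredients are the successor axioms $\neg\, x' = 0$, $\;x = y \rightarrow x' = y'$ and $x' = y' \rightarrow x = y$ (the defining axioms of $f_1$), together with IND.

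Next I would fix the predicate $B(x) \equiv \forall y\,(x = y \vee \neg\, x = y)$ and prove $\forall x\, B(x)$ by induction on $x$. For the base case $B(0)$ I would argue by an inner induction on $y$: when $y$ is $0$ the left disjunct $0 = 0$ holds by REFL, and when $y$ is $y'$ the right disjunct holds since $\neg\, y' = 0$, whence $\neg\, 0 = y'$ by symmetry. For the induction step, assuming $B(x)$ I would prove $B(x')$ by a second inner induction on $y$: for $y = 0$ the right disjunct holds by $\neg\, x' = 0$; for the successor $y'$ I would invoke the outer hypothesis $B(x)$ at the point $y$, that is $x = y \vee \neg\, x = y$, and split. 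If $x = y$ then $x' = y'$ by the second successor axiom, giving the left disjunct; if $\neg\, x = y$ then from $x' = y'$ the third successor axiom would yield $x = y$, a contradiction, so $\neg\, x' = y'$ gives the right disjunct. Collecting the cases and discharging the inductions yields $\forall x\,\forall y\,(x = y \vee \neg\, x = y)$.

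I do not expect any genuine obstacle: the whole argument is constructive, since in every case we explicitly exhibit which disjunct holds and no instance of the excluded middle is used. The only point requiring a little care is the repeated appeal to symmetry of $=$ (for instance to pass from $\neg\, y' = 0$ to $\neg\, 0 = y'$), which is why I would isolate symmetry and transitivity at the outset.
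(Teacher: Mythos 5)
Your proof is correct, and it is essentially the standard well-known argument; note that the paper itself offers no proof of this lemma at all (it is listed among ``results, most of them well-known''), so there is no textual proof to compare against. Your choice of the double nested induction is exactly the right uniform route: the common alternative shortcut, namely proving $\forall z\,(z=0 \vee \neg\, z=0)$ by a single induction and then using an equality-test term such as $|x-y|$, is unavailable in {\bf IA$_0$}, whose only primitives are $0,\,',\,+,\,\cdot$, so your insistence on staying in the common fragment is well placed.

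One point, however, needs a patch before the argument is literally uniform across all four systems. You cite the three successor facts $\neg\, x'=0$, $x=y\rightarrow x'=y'$, $x'=y'\rightarrow x=y$ as ``the defining axioms of $f_1$''; they are indeed axioms of {\bf IA$_0$} and {\bf IA$_1$} (Kleene's axioms on p.~82 of \cite{IM}). But in {\bf HA} and {\bf HA$_1$} as presented in the paper, the only successor-specific axiom is $\neg\, S(0)=0$: neither $\neg\, S(x)=0$ for variable $x$ nor injectivity of $S$ is primitive there. Both are quickly derivable, but only by appealing to primitive recursive constants of the richer language: injectivity follows by REPL from the defining equations of the predecessor constant, since $S(x)=S(y)$ gives $x=\mathrm{pd}(S(x))=\mathrm{pd}(S(y))=y$; and $\neg\, S(x)=0$ follows from the defining equations $\overline{\mathrm{sg}}(0)=1$, $\overline{\mathrm{sg}}(S(x))=0$, since $S(x)=0$ would yield $0=\overline{\mathrm{sg}}(S(x))=\overline{\mathrm{sg}}(0)=S(0)$, contradicting $\neg\, S(0)=0$. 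So in {\bf HA} and {\bf HA$_1$} your proof does, after all, lean on a couple of auxiliary primitive recursive constants --- not an equality test, but $\mathrm{pd}$ and $\overline{\mathrm{sg}}$ --- and this short derivation should be inserted at the point where you list your ingredients. With that addition the proof is complete and valid in all four systems.
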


\begin{lemma}
For any formula $\mathrm{A}$ of {\bf \;S} built up from the
formulas $\mathrm{P_1, \ldots ,P_{\textit{m}}}$ by propositional
connectives or bounded number quantifiers,
$$
\mathrm{ P_1 \vee
\neg P_1, \ldots , P_{\textit{m}} \vee \neg P_{\textit{m}} \vdash
A \vee \neg A}.
$$
\end{lemma}

\begin{lemma}
For any formula $\mathrm{A}$ of {\bf \;S} which is quantifier-free
or has only bounded number quantifiers (and no function
quantifiers), $\mathrm{ \vdash A \vee \neg A}$.
\end{lemma}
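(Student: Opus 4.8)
The plan is to prove Lemma 2.3 by reducing it to the two preceding lemmas. The statement asserts that any formula $\mathrm{A}$ that is quantifier-free, or contains only bounded number quantifiers (and no function quantifiers), satisfies the law of the excluded middle $\mathrm{\vdash A \vee \neg A}$ in {\bf S}. The natural strategy is a structural induction on the build-up of $\mathrm{A}$, but the cleanest route exploits the machinery already established: Lemma 2.2 tells us that decidability propagates through propositional connectives and bounded number quantifiers, provided we can decide the atomic building blocks; and Lemma 2.1 gives us decidability of the one kind of atomic formula we have, namely number equality.

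First I would observe that every prime (atomic) formula of {\bf S} is an equality $\mathrm{(s)=(t)}$ between terms, since these are the only prime formulas in all four systems $\mathrm{\bf IA_0, HA, IA_1, HA_1}$. By Lemma 2.1 we have $\mathrm{\vdash \forall x \forall y(x=y \vee \neg x=y)}$, and specializing the universally quantified number variables to the terms $\mathrm{s}$ and $\mathrm{t}$ (which are legitimate number-sort substitutions, the terms being free for the variables) yields $\mathrm{\vdash (s)=(t) \vee \neg(s)=(t)}$. Thus every prime formula of {\bf S} is decidable.

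Next I would invoke Lemma 2.2. A formula $\mathrm{A}$ of the kind described in Lemma 2.3 is, by hypothesis, built up from prime formulas using only propositional connectives and bounded number quantifiers; crucially it contains no unbounded number quantifiers and no function quantifiers, so it falls exactly within the class to which Lemma 2.2 applies. Taking $\mathrm{P_1, \ldots, P_m}$ to be the prime formulas occurring in $\mathrm{A}$, Lemma 2.2 gives $\mathrm{P_1 \vee \neg P_1, \ldots, P_m \vee \neg P_m \vdash A \vee \neg A}$. Since each hypothesis $\mathrm{P_j \vee \neg P_j}$ is already provable in {\bf S} by the previous paragraph, we may discharge all the hypotheses to conclude $\mathrm{\vdash A \vee \neg A}$, as required.

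The only point needing care—the step I expect to be the main (though minor) obstacle—is the bookkeeping of bounded number quantifiers within the framework of Lemma 2.2. One must check that the abbreviation for bounded quantifiers used in these systems genuinely reduces such quantifiers to finite iterations of the connectives (via the primitive recursive bounded $\mu$ and $\Sigma, \Pi$ coding introduced for $\mathrm{\bf IA_1}$), so that Lemma 2.2's inductive clause for bounded quantifiers legitimately covers them; and that the substitution of terms for variables in Lemma 2.1 respects the freeness conditions. Both are routine given the setup, so the result follows.
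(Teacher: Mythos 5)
Your proof is correct and follows exactly the route the paper intends: the lemma is stated without proof as an immediate consequence of the two preceding lemmas, namely decidability of prime formulas (equalities between terms, via Lemma 2.1 and substitution) combined with the closure of decidability under propositional connectives and bounded number quantifiers (Lemma 2.2). The bookkeeping points you flag are indeed the only delicate spots and are routine as you say.
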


Although the least (natural) number principle fails in
intuitionistic arithmetic in general, it holds for number
predicates that are assumed decidable, and in this case the least
number is unique.

\begin{lemma}
In {\bf S},
$$
\mathrm{ \vdash \forall y (B(y) \vee \neg B(y)) \rightarrow
 \,[\, \exists y B(y) \rightarrow \exists ! y
(B(y) ~\&~ \forall z(z<y \rightarrow \neg B(z)))\,]\,}.
$$
\end{lemma}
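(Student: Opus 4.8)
The goal is to prove that, assuming the predicate $\mathrm{B}$ is decidable, the existence of some witness yields the unique existence of a \emph{least} witness. The plan is to work inside the implication: assume $\mathrm{\forall y (B(y) \vee \neg B(y))}$ and $\mathrm{\exists y\, B(y)}$, and produce a $\mathrm{y}$ satisfying $\mathrm{B(y) ~\&~ \forall z(z<y \rightarrow \neg B(z))}$ together with the uniqueness clause demanded by the abbreviation for $\mathrm{\exists !}$.

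First I would establish existence of a least witness. The natural tool is an instance of induction IND, applied not to $\mathrm{B}$ directly but to the auxiliary predicate $\mathrm{C(x) \equiv \exists y(y < x ~\&~ B(y)) \rightarrow \exists y(B(y) ~\&~ \forall z(z<y \rightarrow \neg B(z)))}$, i.e. the statement ``if some witness below $\mathrm{x}$ exists then a least witness exists.'' The base case $\mathrm{C(0)}$ is vacuous since no $\mathrm{y<0}$. For the induction step, assuming $\mathrm{C(x)}$ and a witness below $\mathrm{x'}$, I would use the decidability hypothesis: by cases on $\mathrm{B(x) \vee \neg B(x)}$, either $\mathrm{x}$ itself is a witness and, applying $\mathrm{C(x)}$ to decide whether a smaller one exists (using $\mathrm{\forall y (B(y) \vee \neg B(y))}$ so that $\mathrm{\exists y(y<x ~\&~ B(y))}$ is decidable by Lemma~1.2/1.3 over a bounded quantifier), one extracts the least; or $\mathrm{\neg B(x)}$, in which case any witness below $\mathrm{x'}$ is already below $\mathrm{x}$, and $\mathrm{C(x)}$ applies directly. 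From $\mathrm{\forall x\, C(x)}$ and $\mathrm{\exists y\, B(y)}$ (so that $\mathrm{\exists y(y< w')\,\&\, B(y)}$ holds for a suitable bound), existence of a least witness follows.

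Next I would prove uniqueness, which is the easy half: if $\mathrm{y}$ and $\mathrm{y'}$ (using a distinct variable, say $\mathrm{w}$) both satisfy $\mathrm{B(\cdot) ~\&~ \forall z(z<\cdot \rightarrow \neg B(z))}$, then by Lemma~1.1 the trichotomy $\mathrm{y<w \vee y=w \vee w<y}$ is available constructively. If $\mathrm{y<w}$, the minimality of $\mathrm{w}$ gives $\mathrm{\neg B(y)}$, contradicting $\mathrm{B(y)}$; symmetrically $\mathrm{w<y}$ contradicts $\mathrm{B(w)}$ via minimality of $\mathrm{y}$; hence $\mathrm{y=w}$. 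Packaging the least-witness existence together with this uniqueness clause yields exactly $\mathrm{\exists !y(B(y) ~\&~ \forall z(z<y \rightarrow \neg B(z)))}$, discharging the two assumptions to obtain the stated implication.

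The main obstacle is the induction step, specifically ensuring that each case split is \emph{constructively} legitimate: the decision $\mathrm{B(x) \vee \neg B(x)}$ comes from the hypothesis, but I must also decide the bounded statement $\mathrm{\exists y(y<x ~\&~ B(y))}$ in order to locate the least witness rather than merely some witness, and this is precisely where Lemma~1.2 (decidability is preserved under propositional connectives and bounded quantifiers, given decidability of the atoms $\mathrm{B(y)}$) is needed. Care is also required with the variable conventions — $\mathrm{z}$ is the bound variable in the minimality clause, so the uniqueness argument must use a fresh variable — but this is routine once the induction is set up correctly.
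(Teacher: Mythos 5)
Your proof is correct. Note that the paper itself gives no proof of this lemma at all: it is stated as one of several ``well-known'' basic facts (it is essentially Kleene's least-number principle for decidable predicates, \emph{Introduction to Metamathematics} $^{*}149$), so there is no argument in the paper to compare against. What you supply is the standard argument: induction (IND) on the auxiliary predicate $\mathrm{C(x) \equiv \exists y(y<x ~\&~ B(y)) \rightarrow \exists y(B(y) ~\&~ \forall z(z<y \rightarrow \neg B(z)))}$, with the case splits in the induction step licensed by the decidability hypothesis together with the paper's Lemma 2.2/2.3 (closure of decidability under propositional connectives and bounded quantifiers), and then uniqueness of the least witness from trichotomy plus the minimality clauses. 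One small citation slip: trichotomy $\mathrm{y<w \vee y=w \vee w<y}$ is \emph{not} the paper's Lemma 2.1, which asserts only decidability of equality $\mathrm{\forall x \forall y(x=y \vee \neg x=y)}$; trichotomy is a separate (standard, induction-provable) theorem of intuitionistic arithmetic, so you should cite it as such rather than as an instance of that lemma. Alternatively, uniqueness follows from decidability of $=$ together with the fact that $\mathrm{\neg(y<w) ~\&~ \neg(w<y) \rightarrow y=w}$, but some ordering fact beyond Lemma 2.1 is needed either way. This does not affect the soundness of your argument.
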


The next lemma asserts that, conversely to the previous,
uniqueness entails decidability; it follows from the decidability
of number-theoretic equality.

\begin{lemma}
$\mathrm{\exists ! y B(y) \vdash B(y) \vee \neg B(y)}$.
\end{lemma}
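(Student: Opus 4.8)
The plan is to reduce the decidability of $\mathrm{B}$ to the decidability of number equality (Lemma 2.1) by showing that the unique witness guaranteed by $\mathrm{\exists!\,y\,B(y)}$ is provably equal to every number satisfying $\mathrm{B}$; thus $\mathrm{B(y)}$ becomes provably equivalent to an equality between numbers, which is decidable.

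First I would unwind the abbreviation. By definition $\mathrm{\exists!\,y\,B(y)}$ is $\mathrm{\exists w\,[\,B(w)\ \&\ \forall z(B(z)\rightarrow w=z)\,]}$, where I rename the bound variable to $\mathrm{w}$ to keep it distinct from the free $\mathrm{y}$ of the target $\mathrm{B(y)\vee\neg B(y)}$. Using $\exists$-elimination I fix such a $\mathrm{w}$, obtaining the two hypotheses $\mathrm{B(w)}$ and $\mathrm{\forall z(B(z)\rightarrow w=z)}$. Since $\mathrm{w}$ does not occur in the conclusion $\mathrm{B(y)\vee\neg B(y)}$, the eigenvariable condition is met and the final discharge of the existential is legitimate.

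Next I would invoke Lemma 2.1 to obtain $\mathrm{y=w\vee\neg\,y=w}$ and argue by cases. In the case $\mathrm{y=w}$: from $\mathrm{B(w)}$ together with $\mathrm{y=w}$ (and symmetry of number equality), the replacement property REPL yields $\mathrm{B(y)}$, hence $\mathrm{B(y)\vee\neg B(y)}$. In the case $\mathrm{\neg\,y=w}$: assume $\mathrm{B(y)}$; instantiating the uniqueness hypothesis with $\mathrm{z:=y}$ gives $\mathrm{w=y}$, whence $\mathrm{y=w}$ by symmetry, contradicting $\mathrm{\neg\,y=w}$; so $\mathrm{\neg B(y)}$, and again $\mathrm{B(y)\vee\neg B(y)}$. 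Combining the two cases by $\vee$-elimination and discharging the existential completes the derivation of $\mathrm{\exists!\,y\,B(y)\vdash B(y)\vee\neg B(y)}$.

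The only points requiring care are not genuine obstacles but bookkeeping: renaming the existentially quantified variable to $\mathrm{w}$ and checking it does not occur in the conclusion so that $\exists$-elimination applies, and the appeals to symmetry of number equality and to REPL (available directly in {\bf HA} and {\bf EL}, and derivable from the finitely many equality axioms in the {\bf IA}-based systems). Everything else reduces to Lemma 2.1, so the argument goes through uniformly in each of {\bf IA$_0$}, {\bf HA}, {\bf IA$_1$} and {\bf HA$_1$}.
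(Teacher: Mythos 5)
Your proof is correct and follows essentially the same route as the paper, which states only that the lemma ``follows from the decidability of number-theoretic equality'' (Lemma 2.1); your case split on $\mathrm{y=w\vee\neg\,y=w}$, using REPL in one branch and the uniqueness clause in the other, is exactly the intended fleshing-out of that remark.
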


The next lemma provides a fact very useful for our purposes.

\begin{lemma}
$\mathrm{\vdash A \vee \neg A \leftrightarrow \exists ! y \,[\,
y\leq 1 ~\&~ (y=0 \leftrightarrow A)\,]}$, with $\mathrm{A}$ not
containing $\mathrm{y}$ free.
\end{lemma}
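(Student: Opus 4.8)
The plan is to prove both directions of the biconditional $\mathrm{A \vee \neg A \leftrightarrow \exists ! y \,[\, y\leq 1 ~\&~ (y=0 \leftrightarrow A)\,]}$ separately, working in {\bf S} throughout and using earlier lemmas freely. Abbreviate $\mathrm{B(y) \equiv y\leq 1 ~\&~ (y=0 \leftrightarrow A)}$. The underlying idea is that the unique witness is forced to be $0$ when $\mathrm{A}$ holds and $1$ when $\mathrm{\neg A}$ holds, so the predicate $\mathrm{B}$ behaves like a characteristic-value predicate for $\mathrm{A}$.

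For the forward direction I would assume $\mathrm{A \vee \neg A}$ and argue by cases. If $\mathrm{A}$ holds, I claim $y=0$ is the unique witness: clearly $\mathrm{0 \leq 1}$ and $\mathrm{0=0 \leftrightarrow A}$ both hold (the latter since both sides are true), giving $\mathrm{B(0)}$; for uniqueness, suppose $\mathrm{B(z)}$, so $\mathrm{z \leq 1}$ and $\mathrm{z=0 \leftrightarrow A}$, and since $\mathrm{A}$ holds we get $\mathrm{z=0}$. Symmetrically, if $\mathrm{\neg A}$ holds, $y=1$ is the unique witness: $\mathrm{1 \leq 1}$ holds, and $\mathrm{1=0 \leftrightarrow A}$ holds because both sides are false (using $\mathrm{\neg 1 = 0}$, available from the successor axioms via Lemma~2.1 or directly); for uniqueness, from $\mathrm{B(z)}$ and $\mathrm{\neg A}$ we get $\mathrm{\neg z = 0}$, and combined with $\mathrm{z \leq 1}$ this forces $\mathrm{z=1}$. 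Assembling the two cases yields $\mathrm{\exists ! y\, B(y)}$.

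For the backward direction I would assume $\mathrm{\exists ! y\, B(y)}$ and derive $\mathrm{A \vee \neg A}$. The cleanest route is to invoke Lemma~2.5, which states precisely $\mathrm{\exists ! y\, B(y) \vdash B(y) \vee \neg B(y)}$: unique existence of a witness yields decidability of the matrix. But in fact I want decidability of $\mathrm{A}$ itself, so instead I would extract the unique witness $y$ with $\mathrm{B(y)}$, note $\mathrm{y \leq 1}$, and apply the decidability of number equality (Lemma~2.1 applied to $\mathrm{y = 0}$) to split on whether $\mathrm{y = 0}$. If $\mathrm{y=0}$, then from $\mathrm{y=0 \leftrightarrow A}$ I get $\mathrm{A}$; if $\mathrm{\neg y = 0}$, then the same biconditional gives $\mathrm{\neg A}$. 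Either way $\mathrm{A \vee \neg A}$ follows.

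The only genuine obstacle is bookkeeping: ensuring that $\mathrm{y \leq 1}$, $\mathrm{\neg 1 = 0}$, and the case-split on $\mathrm{y = 0}$ versus $\mathrm{y = 1}$ are all available uniformly across the four choices of {\bf S}, since these systems have different arithmetical primitives. This is handled by the earlier lemmas — Lemma~2.1 gives decidability of equality, and $\mathrm{\leq}$ together with $\mathrm{\neg 1 = 0}$ are provable from the basic axioms in each system — so no new work is needed beyond citing them. The argument is otherwise entirely propositional once the witness is named, and I expect the whole proof to be short.
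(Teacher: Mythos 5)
Your proof is correct: the paper states Lemma 2.6 without proof (as one of the ``well-known'' basic facts of Section 2), and your argument --- case split on $\mathrm{A \vee \neg A}$ with witnesses $0$ and $1$ for the forward direction, and decidability of $\mathrm{y=0}$ (Lemma 2.1) applied to the extracted witness for the converse --- is exactly the standard argument being left implicit. The bookkeeping points you flag ($\mathrm{\neg\, 1 = 0}$, $\mathrm{z \leq 1 ~\&~ \neg\, z = 0 \rightarrow z = 1}$, and the intuitionistic derivation of $\mathrm{1 = 0 \leftrightarrow A}$ from $\mathrm{\neg A}$ via ex falso) are all routine in each of the four systems, so nothing further is needed.
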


\subsubsection{}

We can now draw a first immediate conclusion about  AC$_{00}$! and
$\mathrm{QF\text{-}AC_{00}}$.

\begin{proposition}
Over $\mathrm{\bf IA_1}$ (and $\mathrm{\bf HA_1}$)
$\mathrm{AC_{00}!}$ entails $\mathrm{QF\text{-}AC_{00}}$.
\end{proposition}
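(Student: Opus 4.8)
The plan is to show that the hypothesis of $\mathrm{QF\text{-}AC_{00}}$ can be strengthened to the hypothesis of $\mathrm{AC_{00}!}$, so that applying the latter yields exactly the conclusion we need. Suppose $\mathrm{A(x,y)}$ is a quantifier-free formula with $\mathrm{x}$ free for $\mathrm{y}$ and $\alpha$ not occurring, and assume the antecedent $\mathrm{\forall x \exists y A(x,y)}$. I want to produce a function $\alpha$ with $\mathrm{\forall x A(x,\alpha(x))}$.

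First I would use decidability. Since $\mathrm{A(x,y)}$ is quantifier-free, Lemma~1.3 gives $\;\vdash \forall x \forall y (A(x,y) \vee \neg A(x,y))$. This is precisely the decidability hypothesis needed to invoke Lemma~1.4: fixing $\mathrm{x}$ and taking $\mathrm{B(y)}$ to be $\mathrm{A(x,y)}$, decidability of $\mathrm{B}$ together with $\mathrm{\exists y A(x,y)}$ yields the \emph{unique} existence of a least witness, namely
$$
\mathrm{\exists ! y \,(A(x,y) ~\&~ \forall z(z<y \rightarrow \neg A(x,z)))}.
$$
Generalizing on $\mathrm{x}$, this gives $\;\mathrm{\forall x \exists ! y \, A^{*}(x,y)}$, where $\mathrm{A^{*}(x,y)}$ abbreviates the formula $\mathrm{A(x,y) ~\&~ \forall z(z<y \rightarrow \neg A(x,z))}$ expressing that $\mathrm{y}$ is the least witness.

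Next I would apply $\mathrm{AC_{00}!}$ to $\mathrm{A^{*}(x,y)}$ (checking the side conditions: $\mathrm{x}$ and the fresh $\alpha$ are free for $\mathrm{y}$, and $\alpha$ does not occur free in $\mathrm{A^{*}}$, which holds since it does not occur in $\mathrm{A}$). This produces a function $\alpha$ with $\mathrm{\forall x A^{*}(x,\alpha(x))}$, and in particular $\mathrm{\forall x A(x,\alpha(x))}$ by dropping the second conjunct. Discharging the assumption and generalizing gives the implication asserted by $\mathrm{QF\text{-}AC_{00}}$. The only mild subtlety — which I expect to be the main thing to get right rather than a genuine obstacle — is the bounded quantifier $\mathrm{\forall z(z<y \rightarrow \dots)}$ appearing in $\mathrm{A^{*}}$: this keeps $\mathrm{A^{*}}$ within the class of formulas whose decidability and least-number behavior are governed by Lemmata~1.2–1.4, but it does make $\mathrm{A^{*}}$ no longer quantifier-free, which is exactly why the stronger principle $\mathrm{AC_{00}!}$ (rather than $\mathrm{QF\text{-}AC_{00}}$ itself) is required for the final step.
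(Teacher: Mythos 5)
Your proposal is correct and is precisely the argument the paper intends: its proof of this proposition simply cites the decidability of quantifier-free formulas (Lemma 2.3) and the least-number principle for decidable predicates (Lemma 2.4) to upgrade $\mathrm{\forall x \exists y A(x,y)}$ to $\mathrm{\forall x \exists ! y A^{*}(x,y)}$ before applying $\mathrm{AC_{00}!}$, exactly as you spell out. The only discrepancy is cosmetic: the lemmas you call 1.3 and 1.4 are numbered 2.3 and 2.4 in the paper.
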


\begin{proof}
By Lemmas 2.3 and 2.4.
\end{proof}

\section{A characteristic function principle}

\subsection{The schema $\mathrm{CF\!_d}$}

\subsubsection{}

Consider the following schema, which asserts that every decidable
predicate of natural numbers has a characteristic function:
$$
\mathrm{CF\!_d \;\;\;\forall x (B(x) \vee \neg B(x))
\rightarrow \exists \beta \forall x \,[\,\beta(x)\leq 1 ~\&~
(\beta(x) = 0 \leftrightarrow B(x))\,]\,},
$$
where $\beta$ does not occur free in $\mathrm{B(x)}$.

Introducing this axiom schema allows to determine the exact
relation of $\mathrm{AC_{00}!}$ and $\mathrm{QF\text{-}AC_{00}}$;
and this in its turn will suggest the relation between {\bf M} and
{\bf EL}.

\begin{proposition}
Over $\mathrm{\bf IA_1}$ (and $\mathrm{\bf HA_1}$),
$\mathrm{AC_{00}!}$ entails $\mathrm{CF\!_d}$.
\end{proposition}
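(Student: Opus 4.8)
The plan is to derive $\mathrm{CF\!_d}$ from $\mathrm{AC_{00}!}$ by packaging the decidability hypothesis into a unique-existence statement and then applying countable unique choice. Assume $\mathrm{\forall x (B(x) \vee \neg B(x))}$. The key observation, supplied by Lemma 2.6, is that for each fixed $\mathrm{x}$ the disjunction $\mathrm{B(x) \vee \neg B(x)}$ is equivalent to the unique-existence statement $\mathrm{\exists ! y \,[\, y\leq 1 ~\&~ (y=0 \leftrightarrow B(x))\,]}$. So first I would set $\mathrm{A(x,y) \equiv y\leq 1 ~\&~ (y=0 \leftrightarrow B(x))}$ (choosing $\mathrm{y}$ fresh, not free in $\mathrm{B(x)}$), and note that Lemma 2.6 applied pointwise under the universal quantifier on $\mathrm{x}$ yields $\mathrm{\forall x \exists ! y A(x,y)}$.

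With the hypothesis of $\mathrm{AC_{00}!}$ now in hand, I would apply the schema to $\mathrm{A(x,y)}$ to obtain a function $\mathrm{\beta}$ with $\mathrm{\forall x A(x,\beta(x))}$, that is, $\mathrm{\forall x \,[\,\beta(x)\leq 1 ~\&~ (\beta(x)=0 \leftrightarrow B(x))\,]}$. This is exactly the conclusion of $\mathrm{CF\!_d}$, so discharging the assumption $\mathrm{\forall x (B(x) \vee \neg B(x))}$ gives the desired implication. The side conditions needed to apply $\mathrm{AC_{00}!}$ — that $\mathrm{x}$ is free for $\mathrm{y}$ in $\mathrm{A(x,y)}$ and that the chosen function variable $\mathrm{\beta}$ does not occur free in $\mathrm{A(x,y)}$ — are met because $\mathrm{A(x,y)}$ is built from $\mathrm{B(x)}$, in which $\mathrm{\beta}$ does not occur by the hypothesis of $\mathrm{CF\!_d}$, together with the decidable predicate $\mathrm{y\leq 1}$.

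The only real point requiring care, rather than a genuine obstacle, is the move from the pointwise equivalence in Lemma 2.6 to the quantified statement $\mathrm{\forall x \exists ! y A(x,y)}$: one must apply the equivalence under the universal quantifier, using that $\mathrm{y}$ does not occur free in $\mathrm{B(x)}$ and hence that the $\forall x$ and the $\exists ! y$ do not interfere. Since Lemma 2.6 is stated uniformly for $\mathrm{A}$ not containing $\mathrm{y}$ free, instantiating $\mathrm{A}$ with $\mathrm{B(x)}$ and generalizing on $\mathrm{x}$ is routine in the underlying two-sorted intuitionistic predicate logic. Thus the proof is short, resting essentially on Lemma 2.6 followed by a single application of $\mathrm{AC_{00}!}$.
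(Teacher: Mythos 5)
Your proof is correct and follows exactly the route the paper intends: its entire proof is the citation ``By Lemma 2.6,'' and your argument is precisely the expansion of that hint --- apply Lemma 2.6 pointwise to convert the decidability hypothesis into $\mathrm{\forall x\, \exists ! y\,[\,y\leq 1 ~\&~ (y=0 \leftrightarrow B(x))\,]}$ and then invoke $\mathrm{AC_{00}!}$ once. Your attention to the variable conditions and to passing the equivalence under the universal quantifier is appropriate but raises no issues beyond what the paper takes as routine.
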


\begin{proof}
By Lemma 2.6.
\end{proof}

\subsubsection{}

Now we show that the unique choice principle $\mathrm{AC_{00}!}$
is equivalent to the conjunction of its two consequences
$\mathrm{QF\text{-}AC_{00}}$ and $\mathrm{CF\!_d}$ over
$\mathrm{\bf IA_1}$ and $\,\mathrm{\bf HA_1}$.

\begin{theorem}
Over $\mathrm{\bf IA_1}$ (and $\,\mathrm{\bf HA_1}$),
$\mathrm{QF\text{-}AC_{00}} + \mathrm{CF\!_d}$ entails
$\mathrm{AC_{00}!}$.
\end{theorem}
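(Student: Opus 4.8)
The plan is to reduce $\mathrm{AC_{00}!}$ to $\mathrm{QF\text{-}AC_{00}}$, using $\mathrm{CF\!_d}$ to replace the (possibly logically complex) matrix $\mathrm{A}$ by a prime formula built from a characteristic function. So I assume the hypothesis $\mathrm{\forall x\,\exists!y\,A(x,y)}$ and must produce an $\alpha$ with $\mathrm{\forall x\,A(x,\alpha(x))}$.

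The first step is to extract decidability from uniqueness. For a fixed $\mathrm{x}$ the assumption gives $\mathrm{\exists!y\,A(x,y)}$, so Lemma 2.5 yields $\mathrm{A(x,y)\vee\neg A(x,y)}$; closing universally over both variables gives $\mathrm{\forall x\forall y\,(A(x,y)\vee\neg A(x,y))}$. To apply the \emph{unary} schema $\mathrm{CF\!_d}$ to this \emph{binary} relation I would code pairs by the primitive recursive pairing $\mathrm{\langle x,y\rangle}$ available in both $\mathbf{IA_1}$ and $\mathbf{HA_1}$, with provable decoding $\mathrm{(\langle x,y\rangle)_0=x}$ and $\mathrm{(\langle x,y\rangle)_1=y}$, setting $\mathrm{B(w)\equiv A((w)_0,(w)_1)}$ for a fresh variable $\mathrm{w}$. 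The displayed decidability then gives $\mathrm{\forall w\,(B(w)\vee\neg B(w))}$, so $\mathrm{CF\!_d}$ supplies a $\beta$ (distinct from $\alpha$) with $\mathrm{\forall w\,[\,\beta(w)\le 1~\&~(\beta(w)=0\leftrightarrow B(w))\,]}$.

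The final step is to peel off a quantifier-free relation and invoke weak choice. Put $\mathrm{C(x,y)\equiv \beta(\langle x,y\rangle)=0}$; this is a prime formula, hence quantifier-free, and $\alpha$ does not occur in it, so it is eligible for $\mathrm{QF\text{-}AC_{00}}$. For each $\mathrm{x}$, the existence half of $\mathrm{\exists!y\,A(x,y)}$ furnishes a $\mathrm{y}$ with $\mathrm{A(x,y)}$, whence $\mathrm{B(\langle x,y\rangle)}$ and therefore $\mathrm{\beta(\langle x,y\rangle)=0}$, i.e. $\mathrm{C(x,y)}$; thus $\mathrm{\forall x\,\exists y\,C(x,y)}$. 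Now $\mathrm{QF\text{-}AC_{00}}$ delivers $\alpha$ with $\mathrm{\forall x\,C(x,\alpha(x))}$, that is $\mathrm{\forall x\,\beta(\langle x,\alpha(x)\rangle)=0}$; unwinding the $\mathrm{CF\!_d}$ equivalence together with the decoding equations rewrites this as $\mathrm{\forall x\,A(x,\alpha(x))}$, as required.

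The conceptual crux is the recognition that $\mathrm{CF\!_d}$ is exactly what bridges the gap between the two choice principles: it converts the logically arbitrary (but decidable) relation $\mathrm{A}$ into the prime formula $\mathrm{C}$, to which the weak quantifier-free schema applies, while the coding step is what lets a principle about unary predicates govern a two-place relation. Beyond this, I expect only routine bookkeeping: choosing $\mathrm{w}$ fresh and checking that the decoding terms are free for the relevant variables in the substitution defining $\mathrm{B}$. It is worth noting that the uniqueness clause of $\mathrm{AC_{00}!}$ is used only to license Lemma 2.5; once decidability and the characteristic function $\beta$ are in hand, only the existence half is needed.
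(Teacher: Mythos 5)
Your proposal is correct and follows essentially the same route as the paper's own proof: extract decidability from uniqueness via Lemma 2.5, contract the two arguments into one via the pairing $\mathrm{\langle x,y\rangle}$ so that $\mathrm{CF\!_d}$ applies, and then feed the resulting prime formula $\mathrm{\beta(\langle x,y\rangle)=0}$ into $\mathrm{QF\text{-}AC_{00}}$. Your closing remark that uniqueness is used only to license Lemma 2.5 is also accurate for the paper's argument.
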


\begin{proof}
Assume $\mathrm{(a)\;\forall x \exists ! y A(x,y)}$. By Lemma 2.5
we get $\mathrm{\forall x \forall y \,[\,A(x,y) \vee \neg
A(x,y)\,]\,}$, so, by specializing for $\mathrm{(w)_0,(w)_1}$,
$\mathrm{\forall w \,[\,A((w)_0,(w)_1) \vee \neg
A((w)_0,(w)_1)\,]\,}$. Applying $\mathrm{CF\!_d}$ to this,
$\mathrm{\exists \beta \forall w \,[\,\beta(w)\leq 1 ~\&~
(\beta(w) = 0 \leftrightarrow A((w)_0,(w)_1))\,]\,}$, from which
(without $\exists \beta$, towards $\exists$-elim., specializing
for $\mathrm{\langle x, y \rangle}$) and (a) we get
$\mathrm{\forall x \exists y \beta (\langle x,y \rangle )=0}$. So
by  QF-AC$_{00}$, $\mathrm{\exists \alpha \forall x \beta(\langle
x,\alpha (x)\rangle )=0},$ and finally $\mathrm{\exists \alpha
\forall x A(x,\alpha (x))}$.
\end{proof}

\begin{corollary}
Over $\mathrm{\bf IA_1}$ (and $\mathrm{\bf HA_1}$),
$\mathrm{AC_{00}!}$ is equivalent to $\mathrm{QF\text{-}AC_{00}} +
\mathrm{CF\!_d}$.
\end{corollary}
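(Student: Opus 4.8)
The plan is to prove the biconditional by splitting it into its two implications, each of which is already available from the preceding results. For the forward direction, I would assume $\mathrm{AC_{00}!}$ and derive both conjuncts separately: $\mathrm{QF\text{-}AC_{00}}$ follows by Proposition 2.7, and $\mathrm{CF\!_d}$ follows by Proposition 3.1. Since both propositions are established over $\mathrm{\bf IA_1}$ and equally over $\mathrm{\bf HA_1}$, the conjunction $\mathrm{QF\text{-}AC_{00}} + \mathrm{CF\!_d}$ is a consequence of $\mathrm{AC_{00}!}$ in each of the two theories.

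For the reverse direction, that $\mathrm{QF\text{-}AC_{00}} + \mathrm{CF\!_d}$ entails $\mathrm{AC_{00}!}$, I would appeal directly to Theorem 3.2. Combining the two directions yields the claimed equivalence over $\mathrm{\bf IA_1}$ (and over $\mathrm{\bf HA_1}$).

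I do not expect any genuine obstacle here, since the corollary is a purely formal consequence of Propositions 2.7 and 3.1 together with Theorem 3.2. The substantive content lies entirely in Theorem 3.2, whose argument codes each pair $\mathrm{(x,y)}$ into a single number $\mathrm{w = \langle x,y\rangle}$, uses $\mathrm{CF\!_d}$ to extract a characteristic function $\beta$ of the decidable relation $\mathrm{A((w)_0,(w)_1)}$, and then applies $\mathrm{QF\text{-}AC_{00}}$ to the quantifier-free matrix $\mathrm{\beta(\langle x,y\rangle)=0}$ to recover the choice function. The forward direction, by contrast, is immediate from the decidability and uniqueness facts already assembled, namely Lemmas 2.3 and 2.4 (underlying Proposition 2.7) and Lemma 2.6 (underlying Proposition 3.1).
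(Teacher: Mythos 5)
Your proposal is correct and matches the paper exactly: the corollary is stated without a separate proof precisely because it is the conjunction of Proposition 2.7 and Proposition 3.1 (forward direction) with Theorem 3.2 (reverse direction). Your summary of where the substantive content lies (the coding and $\mathrm{CF\!_d}$/$\mathrm{QF\text{-}AC_{00}}$ argument of Theorem 3.2) is also accurate.
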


\subsection{Classical models for weak theories of two-sorted
arithmetic}

\subsubsection{}

Let {\bf T} be the formal theory $\mathrm{\bf IA_1} +
\mathrm{QF\text{-}AC_{00}}$. {\bf T} can be extended to a
corresponding classical theory $\mathrm{ {\bf T}^\circ}$, by
replacing the axiom schema $\mathrm{\neg A \rightarrow (A
\rightarrow B)}$ by  $\mathrm{\neg \neg A \rightarrow A}$. We will
use $\mathrm{ {\bf T}^\circ}$  to see that {\bf T} does not prove
$\mathrm{CF\!_d}$ by showing that $\mathrm{ {\bf T}^\circ}$ has a
classical model in which $\mathrm{CF\!_d}$ fails.

\begin{theorem}
$\mathrm{(a)}$ $\mathrm{\bf IA_1} + \mathrm{QF\text{-}AC_{00}}$
does not prove $\mathrm{CF\!_d}$.

$\mathrm{(b)}$ $\mathrm{\bf EL}$ does not prove $\mathrm{CF\!_d}$.
\end{theorem}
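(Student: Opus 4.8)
The plan is to exploit the fact that $\mathbf{T}^{\circ}$ extends $\mathbf{T}$: every derivation in $\mathbf{T}$ is also a derivation in $\mathbf{T}^{\circ}$, so it suffices to produce a classical model of $\mathbf{T}^{\circ}=\mathbf{IA_1}+\mathrm{QF\text{-}AC_{00}}$ (with $\neg\neg A\rightarrow A$ in place of ex falso) in which $\mathrm{CF\!_d}$ is false. For (a) I would take the \emph{recursive model} $\mathcal{R}$: interpret the number sort as $\omega$ with the standard meanings of $0,{}',+,\cdot$ and of all the constants $\mathrm{f_0,\dots,f_{26}}$, and interpret the function sort as the collection of all total recursive functions $\omega\rightarrow\omega$.

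First I would check that $\mathcal{R}\models\mathbf{T}^{\circ}$. The arithmetical axioms and the schema $\mathrm{IND}$ (which in a classical $\omega$-model holds for every definable predicate) are immediate, as are the equality axiom for function variables and $\mathrm{\neg S(0)=0}$. The one structural point is that $\mathcal{R}$ must be closed under term and functor formation: each $\mathrm{f_i}$ is a primitive recursive functional, so it sends recursive function arguments to recursive values, application of a recursive function to a recursive argument is recursive, and $\lambda x.t$ (with $t$ built from recursive parameters) again names a recursive function; hence every functor denotes an element of the function sort and $\lambda$-conversion holds.

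The crucial axiom to verify is $\mathrm{QF\text{-}AC_{00}}$, and this is the step where I expect to spend the most care. If $\mathrm{A(x,y)}$ is quantifier-free, then under any assignment of recursive functions to its function parameters the relation defined by $\mathrm{A(x,y)}$ is recursive, since equality is decidable and all constants denote recursive objects (this is precisely the point where Lemma 2.3, the decidability of quantifier-free formulas, is used, now in its classical recursive guise). Thus, assuming $\mathcal{R}\models\forall x\exists y\,\mathrm{A(x,y)}$, the function $\alpha(x)=\mu y\,\mathrm{A(x,y)}$ is total and recursive, hence a member of the function sort, and it witnesses $\exists\alpha\forall x\,\mathrm{A(x,\alpha(x))}$; so $\mathrm{QF\text{-}AC_{00}}$ holds. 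It is exactly the decidability of the matrix that makes the least-witness search terminate and stay within the function sort.

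Finally I would falsify $\mathrm{CF\!_d}$ in $\mathcal{R}$. Let $\mathrm{B(x)}\equiv\exists y\,\mathrm{T}(x,x,y)$, where $\mathrm{T}$ is Kleene's primitive recursive predicate (not to be confused with the theory $\mathbf{T}$), so that $\mathrm{B}$ defines the non-recursive halting set $K=\{e:\exists y\,\mathrm{T}(e,e,y)\}$. Because $\mathcal{R}$ is a classical model, $\forall x\,(\mathrm{B(x)}\vee\neg\mathrm{B(x)})$ holds and the hypothesis of $\mathrm{CF\!_d}$ is met; but a $\beta$ with $\forall x\,[\beta(x)\leq 1~\&~(\beta(x)=0\leftrightarrow\mathrm{B(x)})]$ would be a recursive characteristic function of $K$, which does not exist. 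Hence $\mathcal{R}\not\models\mathrm{CF\!_d}$, and since $\mathbf{T}\vdash\mathrm{CF\!_d}$ would give $\mathbf{T}^{\circ}\vdash\mathrm{CF\!_d}$, this proves (a). For (b) the same recursive structure, now read over the larger signature, serves as a classical model of $\mathbf{EL}^{\circ}=\mathbf{HA_1}^{\circ}+\mathrm{QF\text{-}AC_{00}}$: all primitive recursive function symbols of $\mathbf{HA_1}$ and the recursor $\mathrm{rec}$ again denote recursive objects and preserve recursiveness, $\mathrm{REC}$ holds by definition, the verification of $\mathrm{QF\text{-}AC_{00}}$ is identical, and the same $\mathrm{B}$ falsifies $\mathrm{CF\!_d}$; so $\mathbf{EL}\not\vdash\mathrm{CF\!_d}$.
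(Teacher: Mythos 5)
Your proposal is correct and follows essentially the same route as the paper: both construct the classical model whose function sort consists of all general recursive functions, verify $\mathrm{QF\text{-}AC_{00}}$ by observing that quantifier-free matrices define recursive relations (so the least-witness function is recursive), and refute $\mathrm{CF\!_d}$ via the predicate $\exists y\, T(x,x,y)$, whose characteristic function is not recursive. The only cosmetic difference is that the paper cites the formalized normal-form fact from \cite{FIM} (every quantifier-free formula is provably equivalent to $\mathrm{q}=0$ for a term $\mathrm{q}$) where you argue the decidability of the matrix semantically.
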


\begin{proof}
(a) Let {\bf T} and $\mathrm{ {\bf T}^\circ}$ be as in the
discussion above. We consider the structure $\mathcal{GR}$ for the
language of $\mathrm{ {\bf T}^\circ}$ consisting of the sets and
functions given by (i)-(iii):

 (i) The set $\mathbb{N}$ of the natural
numbers, that serves as the universe of the first sort, over which
the number variables range.

(ii) The subset $\mathcal{GR}$\footnote{The structure that we
consider is characterized by the choice of the universe of the
second sort, so we use the same name for both.} of the set of all
functions from $\mathbb{N}$ to $\mathbb{N}$ consisting of all the
general recursive functions from $\mathbb{N}$ to $\mathbb{N}$,
which serves as the universe of the second sort, over which the
function variables range.

(iii) The function(al)s $f_0, \ldots, f_p$ that correspond to the
function(al) constants $\mathrm{f_0, \ldots, f}_p$: each $f_i$,
$i=0,\ldots ,p$, is the primitive recursive function(al) obtained
by the primitive recursive description expressed by the defining
axioms of $\mathrm{f}_i$.

The interpretation of a term or functor under an assignment into
$\mathcal{GR}$ and the notions of satisfaction and truth are as
usual. In particular, the interpretation
$\mathrm{u^{\mathcal{GR}}}$ in $\mathcal{GR}$ of a functor u of
the form $\mathrm {\lambda x.t}$ where t is a term, under an
assignment $v$, is given by
$$
\mathrm{(\lambda x. t)^{\mathcal{GR}}}=
\lambda n. \overline{v(\mathrm{x}|n)}(\mathrm{t}),
$$
where $\overline{v(\mathrm{x}|n)}$ is the extension to all terms
and functors of the assignment which assigns the natural number
$n$ to x and agrees with $v$ on all other variables, the $\lambda$
in the interpretation is the usual (informal) Church's $\lambda$,
and $n$ ranges over $\mathbb{N}$. Function application
(represented by parentheses) is interpreted accordingly. It is
straightforward that  $\mathcal{GR}$ is a model of $\mathrm{\bf
IA_1}$.

$\mathrm{QF\text{-}AC_{00}}$ holds in $\mathcal{GR}$: using the
following fact (shown in \cite{FIM}, pp. 27-31) and the least
number operator, we obtain the function asserted to exist by
$\mathrm{QF\text{-}AC_{00}}$.

\textsc{Fact.} For any formula Q which is quantifier-free (or has
only bounded number quantifiers), we can construct a term q, with
the same free variables as Q, such that $\mathrm{\vdash q \leq 1
\;\;\text{and} \;\;\vdash Q \leftrightarrow q = 0}$. The
construction of q and the proofs are done in $\mathrm{\bf IA_1}$.

It is easy to see that $\mathrm{CF\!_d}$ does not hold in
$\mathcal{GR}$, since the law of the excluded middle holds in
$\mathcal{GR}$ while e.g. the predicate $\exists y T(x,x,y)$,
where $T(x,y,z) \Leftrightarrow z$ is the code of the computation
of the value of the partial recursive function with g\"{o}del
number $x$ at the argument $y$ (the Kleene $T$-predicate), does
not have a general recursive characteristic function.

(b) The argument is similar to the one for (a). We only have to
consider now $\mathcal{GR}$ as a structure with infinitely many
functions, corresponding to the function constants for
number-theoretic functions of {\bf EL}, and the recursor
functional (which, we note, is itself a primitive recursive
functional).
\end{proof}

\textsc{Remark}. It is well-known that in the presence of
$\mathrm{AC_{00}!}$ Church's Thesis in the form $ \forall\alpha
\exists x \forall y \exists z (T(x,y,z) ~\&~ U(z) = \alpha (y))$,
where $T(x,y,z)$  is the Kleene $T$-predicate and $U$ the
result-extracting function, contradicts classical logic. The
previous theorem makes it clear that this is due to
$\mathrm{CF\!_d}$.

\begin{corollary}
$\mathrm{(a)}$ $\mathrm{\bf IA_1} + \mathrm{QF\text{-}AC_{00}}$ is
a proper subtheory of $\,\mathrm{\bf M}$.

$\mathrm{(b)}$ $\mathrm{\bf EL}$ is a proper subtheory of
$\,\mathrm{\bf EL}  + \mathrm{AC_{00}!} =
 \mathrm{\bf EL}  + \mathrm{CF\!_d}$.
\end{corollary}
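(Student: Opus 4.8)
The plan is to assemble this corollary entirely from the results already established, since both inclusions are immediate and both properness claims reduce to the nonderivability of $\mathrm{CF\!_d}$ supplied by Theorem 3.4.

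For part (a), I would first observe that $\mathrm{\bf IA_1} + \mathrm{QF\text{-}AC_{00}}$ is contained in $\mathrm{\bf M} = \mathrm{\bf IA_1} + \mathrm{AC_{00}!}$: by Proposition 2.7, $\mathrm{AC_{00}!}$ entails $\mathrm{QF\text{-}AC_{00}}$ over $\mathrm{\bf IA_1}$, so every theorem of $\mathrm{\bf IA_1} + \mathrm{QF\text{-}AC_{00}}$ is a theorem of $\mathrm{\bf M}$. For properness I would exhibit a sentence provable in $\mathrm{\bf M}$ but not in $\mathrm{\bf IA_1} + \mathrm{QF\text{-}AC_{00}}$: by Proposition 3.1, $\mathrm{\bf M}$ proves $\mathrm{CF\!_d}$, whereas by Theorem 3.4(a), $\mathrm{\bf IA_1} + \mathrm{QF\text{-}AC_{00}}$ does not. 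Hence the inclusion is strict.

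For part (b), I would first verify the asserted identity $\mathrm{\bf EL} + \mathrm{AC_{00}!} = \mathrm{\bf EL} + \mathrm{CF\!_d}$. Since $\mathrm{\bf EL} = \mathrm{\bf HA_1} + \mathrm{QF\text{-}AC_{00}}$ already contains $\mathrm{QF\text{-}AC_{00}}$, both theories sit over $\mathrm{\bf HA_1}$ with $\mathrm{QF\text{-}AC_{00}}$ available for free. By Proposition 3.1, the extension by $\mathrm{AC_{00}!}$ proves $\mathrm{CF\!_d}$, so it proves everything in $\mathrm{\bf EL} + \mathrm{CF\!_d}$; conversely, by Theorem 3.2 (or Corollary 3.3), $\mathrm{QF\text{-}AC_{00}} + \mathrm{CF\!_d}$ entails $\mathrm{AC_{00}!}$, so $\mathrm{\bf EL} + \mathrm{CF\!_d}$ proves everything in $\mathrm{\bf EL} + \mathrm{AC_{00}!}$. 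The two extensions therefore have the same theorems. Properness of $\mathrm{\bf EL} \subsetneq \mathrm{\bf EL} + \mathrm{CF\!_d}$ then follows exactly as in (a): the larger theory proves $\mathrm{CF\!_d}$ by definition, while Theorem 3.4(b) states that $\mathrm{\bf EL}$ does not.

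I do not anticipate any serious obstacle, as all the genuine content is already encapsulated in the model-theoretic separation of Theorem 3.4; the only point requiring a little care is to record both directions of the equality in (b), relying on the fact that $\mathrm{QF\text{-}AC_{00}}$ is built into $\mathrm{\bf EL}$ so that Theorem 3.2 and Corollary 3.3 apply verbatim.
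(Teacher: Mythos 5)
Your proposal is correct and follows exactly the route the paper intends: the inclusions come from Propositions 2.7 and 3.1 together with Theorem 3.2 (equivalently Corollary 3.3) for the identity in (b), and properness in both parts is witnessed by an instance of $\mathrm{CF\!_d}$ via Theorem 3.4. The paper leaves this corollary without an explicit proof precisely because the argument is the assembly you describe, so nothing further is needed.
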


\subsubsection{}

By interpreting the function variables as varying over all
primitive recursive functions of one number variable we obtain, as
in the previous theorem, a classical model for $\mathrm{\bf HA_1}$
in which $\mathrm{QF\text{-}AC_{00}}$ does not hold, as it
guarantees closure under the notion ``general recursive in''.
Similarly for $\mathrm{\bf IA_1}$.

\begin{theorem}
$\mathrm{(a)}$ $\mathrm{\bf HA_1}$ does not prove
$\mathrm{QF\text{-}AC_{00}}$.

$\mathrm{(b)}$ $\mathrm{\bf IA_1}$ does not prove
$\mathrm{QF\text{-}AC_{00}}$.
\end{theorem}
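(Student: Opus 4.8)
The plan is to mimic the proof of the previous theorem, replacing the model $\mathcal{GR}$ of general recursive functions by the structure $\mathcal{PR}$ whose second-sort universe is the set of all one-place \emph{primitive} recursive functions from $\mathbb{N}$ to $\mathbb{N}$, keeping $\mathbb{N}$ as the first-sort universe and interpreting each function(al) constant by the primitive recursive function(al) it describes. As before, I would pass to the classical extension (replacing $\mathrm{\neg A \rightarrow (A \rightarrow B)}$ by $\mathrm{\neg\neg A \rightarrow A}$) and exhibit a classical model in which $\mathrm{QF\text{-}AC_{00}}$ fails; since $\mathbf{HA_1}$ (resp. $\mathbf{IA_1}$) is contained in its classical extension, this refutes provability.

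First I would check that $\mathcal{PR}$ is a (classical) model of $\mathbf{HA_1}$. The equality and number axioms, $\mathrm{IND}$ and $\mathrm{REPL}$ hold because the first sort is the standard $\mathbb{N}$ and the number constants receive their intended meaning; the defining equations for the function constants, the recursor axioms $\mathrm{REC}$, and $\lambda$-conversion hold by the very definition of the interpretation, exactly as for $\mathcal{GR}$. The one point that genuinely requires verification — and the main obstacle — is that the second sort is \emph{closed} under all the operations of the language, i.e. that the interpretation of every functor lands in $\mathcal{PR}$. This reduces to the facts that each constant $\mathrm{f}_i$ and the recursor express functionals that are primitive recursive in their function arguments, and that the one-place primitive recursive functions are closed under composition and primitive recursion; consequently the interpretation of any functor $\lambda x.t$, with its function parameters ranging over $\mathcal{PR}$, is again primitive recursive, hence a member of $\mathcal{PR}$.

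Next I would refute $\mathrm{QF\text{-}AC_{00}}$ in $\mathcal{PR}$, which is precisely where this principle ``guarantees closure under general recursive in''. Fix a G\"odel number $e$ of a total recursive function that is \emph{not} primitive recursive (for instance the diagonal of Ackermann's function), and let $\mathrm{A}(x,y)$ be the quantifier-free formula expressing the Kleene predicate $T(e,x,y)$ (decidable, hence expressible quantifier-free by the Fact). Since $e$ is an index of a total function, $\mathrm{\forall x \exists y\, T(e,x,y)}$ is true in the standard $\mathbb{N}$ and so holds in $\mathcal{PR}$. If $\mathrm{QF\text{-}AC_{00}}$ held we would obtain $\alpha \in \mathcal{PR}$, that is a primitive recursive $\alpha$, with $\mathrm{\forall x\, T(e,x,\alpha(x))}$; but then $\lambda x.\,U(\alpha(x))$ would be a primitive recursive function equal to $\{e\}$, contradicting the choice of $e$. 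Hence $\mathcal{PR}$ is a classical model of $\mathbf{HA_1}$ in which $\mathrm{QF\text{-}AC_{00}}$ fails, proving (a).

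Finally, for (b) I would use the same structure restricted to the language $\mathcal{L}(\mathbf{IA_1})$, which carries only the finitely many constants $\mathrm{f_0,\ldots,f}_p$ and no recursor; the closure of $\mathcal{PR}$ under these constants and under $\lambda$-abstraction holds for exactly the same reason, so the restriction of $\mathcal{PR}$ is a classical model of $\mathbf{IA_1}$, and the identical $T$-predicate argument refutes $\mathrm{QF\text{-}AC_{00}}$ there as well.
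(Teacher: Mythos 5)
Your proposal is correct and follows exactly the route the paper takes: the paper's proof also passes to the classical extension, interprets the function variables as ranging over the one-place primitive recursive functions (noting closure under the primitive recursive function(al) constants, the recursor and $\lambda$-abstraction), and refutes $\mathrm{QF\text{-}AC_{00}}$ via the primitive recursive characteristic function of the Kleene $T$-predicate together with a total general recursive but not primitive recursive function such as Ackermann's. You have merely written out in detail what the paper states in one sentence.
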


\begin{proof}
By using for example the primitive recursive characteristic
function of the Kleene $T$-predicate and the general but not
primitive recursive Ackermann function.
\end{proof}

\section{Introduction of a recursor in {\bf M}}

To show that {\bf M} and {\bf EL} + $\mathrm{CF\!_d}$ are
essentially equivalent we will find a common conservative, in fact
definitional, extension of both. In order to obtain it, we add one
by one the missing constants of each system, and show that the
corresponding extension is definitional. In this way we reach
conservative extensions of the two systems in the same language,
which are identical (except for trivial notational differences).
Our treatment is based on \cite{IM}, $\S$74, where the one-sorted
first-order case of definitional extensions is covered, and on
\cite{JRMPhD}, where the method is applied for a result in the
two-sorted case.

The first step is to add a recursor constant to {\bf M}. The
notions of conservative and of definitional extension will be our
main tool. We first give the definitions that we will use (see
also \cite{Troelstra1973}) and then make some useful observations
concerning equality and replacement.

\subsection{Conservative and definitional extensions}

\subsubsection{}

\textsc{Definition}. Let {\bf S$_1$}, {\bf S$_2$} be formal
systems based on (many-sorted) intuitionistic predicate logic with
equality, and let the language  $\mathcal{L}$({\bf S$_2$}) of {\bf
S$_2$} extend the language $\mathcal{L}$({\bf S$_1$}) of {\bf
S$_1$}, and the theorems of {\bf S$_2$} contain the theorems of
{\bf S$_1$}. {\bf S$_2$} is a {\em conservative extension} of {\bf
S$_1$} if the theorems of {\bf S$_2$} that are formulas of {\bf
S$_1$} are exactly the theorems of {\bf S$_1$}.

\textsc{Definition}. Let  {\bf S$_1$}, {\bf S$_2$} be formal
systems with $\mathcal{L}$({\bf S$_1$}) contained in
$\mathcal{L}$({\bf S$_2$}). {\bf S$_2$} is a {\em definitional
extension} of {\bf S$_1$} if there exists an effective mapping (or
translation) $\,'\,$ which, to each formula E of {\bf S$_2$},
assigns a formula E$'$ of {\bf S$_1$} such that:

\begin{enumerate}

\item[I.]{$\;\mathrm{E'  \equiv E,\;}$
for E a formula of $\mathcal{L}$({\bf S$_1$}).}

\item[II.]{ $\;\mathrm{\vdash_{S_2}  E'
\leftrightarrow E. }$}

\item[III.]{$\;$If $\mathrm{\Gamma \vdash_{S_2} E }$, then
$\mathrm{\Gamma ' \vdash_{S_1}  E' }$.}

\item[IV.]{ $\;\,'\,$ commutes with the logical operations (of {\bf S$_1$}).}

\end{enumerate}

If the addition of a  symbol gives a definitional extension, the
symbol is called \emph{eliminable (from the extended to the
original system)}; conditions I - IV are called \emph{elimination
relations}; and we say that the symbol is \emph{added
definitionally}.

A definitional extension is obviously conservative, and moreover
every theorem of the extended system is equivalent (in the
extended system), by a translation, to one of the original. So it
is an inessential extension.

\subsection{On equality and replacement}

\subsubsection{Many-sorted intuitionistic predicate logic with
equality}

The systems that we are studying are based on many-sorted (and
specifically two-sorted) intuitionistic predicate logic with
equality, so the following axiom and axiom schema should be
satisfied for each sort i (we refer to \cite{Troelstra1973}).
$$
\begin{array}{ll}
    \mathrm{REFL^i \;\;\;x^i = x^i,} \\
    \mathrm{REPL^i \;\;\;x^i = y^i \rightarrow
(A(x) \rightarrow A(y)), \;\text{with x, y free for}\; z^i\;\text{ in A(z)}.}
\end{array}
$$

\subsubsection{Treatment of  equality in the systems under
study}

In all the systems that we consider only number equality is given
as a primitive; function equality is defined extensionally by the
abbreviation $\mathrm{u = v \equiv \forall x \,(u) (x) = (v)
(x)}$.

By EQ we denote all the axioms REFL$^\text{i}$ and
REPL$^\text{i}$, i = 0, 1. It is possible (\cite{IM}, $\S$73), as
in the case of $\mathrm{\bf IA_0}$  and {\bf M}, to reduce the
axioms of EQ to simpler (and in some cases only finitely many)
axioms, as follows (we refer to systems with only function(al)
constants; the case of predicate constants is treated similarly):

(A) By equality axioms for the binary predicate symbol $\;=\;$ are
meant the axioms $\mathrm{ x=x,}$ and $\mathrm{x=y \rightarrow
(x=z \rightarrow y=z)}$.

(B) By equality axioms for a function(al) symbol f of $k$ number
and $l$ function arguments are meant the $k$ formulas
$$
\begin{array}{rl}
    \mathrm{x=y \rightarrow} & \mathrm{f(x_1, \ldots ,
    x_{\textit{i}-1},x,
    x_{\textit{i}+1}, \ldots ,x_{\textit{k}}, \alpha _1,
    \ldots , \alpha _{\textit{l}})=}\\
    &\mathrm{ f(x_1, \ldots , x_{\textit{i}-1},y,
    x_{\textit{i}+1}, \ldots ,x_{\textit{k}}, \alpha _1,
    \ldots , \alpha _{\textit{l}}),\;\; \;\textit{i} = 1,
   \ldots , \textit{k},}
\end{array}
$$
and the corresponding $l$ formulas for function variables.

(C) The axioms EQ of a two-sorted formal system with type-0
equality as a primitive and type-1 equality defined as above, and
with only function(al) constants, are provable from
the following instances or consequences of them:\\
1. The equality axioms for = .\\
2. The equality axioms for the function(al) constants of its
alphabet.\\
3. The equality axiom for function variables $\mathrm{ x = y
\rightarrow \alpha (x) = \alpha (y)}$.

Thanks to the fact that the function(al) constants of {\bf M} (and
$\mathrm{\bf IA_1}$) are introduced successively via the primitive
recursive description of the corresponding functions, the equality
axioms for these are provable in {\bf M}; the proofs are by use of
IND (see Lemma 5.1 on p. 20 of \cite{FIM}). In the case of {\bf
EL} (and $\mathrm{\bf HA_1}$), the axioms by REFL$^0$, REPL$^0$
are all introduced from the beginning, but it is easy to see that,
in this case too, it suffices to include (C) 1, 3.

\subsubsection{The replacement theorem}

Lemma 4.2, p. 16 of \cite{FIM}, gives the replacement theorem for
{\bf M}. Since the proviso of the lemma is satisfied in the case
of {\bf EL} as a consequence of the preceding paragraph, Lemma 4.2
of \cite{FIM} provides the replacement theorem for {\bf EL}. The
same holds for the system that we will obtain by adding a recursor
to {\bf M}.

We note that the replacement theorem requires the equality axioms
only for the function symbols that have the specified occurrence
to be replaced within their scope, in the formula in which the
replacement takes place.

\subsection{Introducing a recursor in {\bf M}}

\subsubsection{}

We will add now to {\bf M} a recursor functional and  prove that
the resulting extension is definitional. Let {\bf S$_1$} be the
\textit{minimal system of analysis} {\bf M} and {\bf S$_2$}  the
system {\bf M} + Rec, obtained by adding to {\bf M} the functional
constant rec  together with the corresponding term formation rule
``if t, s are terms and u a functor, then rec(t, u, s) is a
term'', and the following axiom Rec defining it:
$$
\mathrm{Rec\;\;\;\;A(x, \alpha ,y, rec(x, \alpha ,y) ),}
$$
where $\mathrm{A(x, \alpha ,y,w)}$ is the formula
$$
\mathrm{\exists \beta \,[\,\beta (0)=x ~\&~ \forall z \,\beta
(z')=\alpha (\langle \beta (z),z\rangle ) ~\&~ \beta (y)=w
\,]\,.}
$$
The new constant rec represents then the recursor
functional, which corresponds to definition by the schema of
primitive recursion.

\vskip 0.1cm

\textsc{Remark}. We could have introduced the new functional
constant rec in {\bf M}  by the pair of equations REC that define
it in {\bf EL} and consider it as the f$_{27}$, extending the list
of constants of {\bf M} (using the second of the forms of the
definitions of the constants f$_i$). We have not adopted this
choice, because the presence of rec would make redundant many of
the constants of the list and because we consider this addition
temporary, only for the purpose of comparison.

Lemma 5.3(b) of \cite{FIM} (stated below) provides definition by
primitive recursion in {\bf M}, so we based our definition
directly on it. We followed \cite{IM}, $\S$74, in introducing a
new function symbol by a formula for which the formalism proves
that it has a functional character. We note also that some of the
formal  systems that we will consider ({\bf BIM}, {\bf WKV}, {\bf
H}) have  definition by primitive recursion as an axiom or axiom
schema, in forms very similar to Lemma 5.3(b) of \cite{FIM}. As we
will see the two ways of introducing  the new constant are
equivalent.

\subsubsection{Interderivability of $\mathrm{Rec}$
  and $\mathrm{REC}$}

We can easily see that {\bf S$_2$} = {\bf M} + Rec is equivalent
with {\bf S$'_2$} = {\bf M} + REC, in the sense that every
instance of REC is provable in {\bf S$_2$} and vice versa. In
fact, we can show that REC and Rec are interderivable over
$\mathrm{\bf IA_1}$ (and $\mathrm{\bf HA_1}$). So, in
 {\bf EL} and other systems, it is  immaterial which of
the two ``definitions'' of the constant rec is considered.

\subsubsection{}

We will show that {\bf S$_2$} is a definitional extension of {\bf
S$_1$}.

\textsc{Notation.} (a) In the following, by $\vdash _1$ and
$\vdash _2$ we denote provability in {\bf S$_1$} and {\bf S$_2$},
respectively.\\
(b) The unique existential function quantifier is introduced as an
abbreviation by
$$
\mathrm{\exists ! \beta C(\beta ) \equiv
 \exists \beta \,[\,C(\beta ) ~\&~ \forall \gamma \,
 (C(\gamma )
 \rightarrow \beta = \gamma )\,]}.
$$

\textsc{Remark 1}. With the help of the unique existential
function quantifier, we can formulate compactly the following
version of AC$_{00}$!:
$$
\mathrm{\forall x \exists !y A(x,y)
\rightarrow \exists ! \alpha \forall x A(x,\alpha (x)).}
$$

Although this schema is apparently stronger than AC$_{00}$!, it is
easily shown that it is a consequence of it, hence equivalent
(over two-sorted intuitionistic logic with equality). We will use
this version in some proofs.

The  following lemma (\cite{FIM}, p. 39) is proved in {\bf M} and
justifies ``definition by primitive recursion'' in this formal
theory.

\textsc{Lemma} 5.3(b) (\cite{FIM}). Let $\mathrm{y, z}$ be
distinct number variables, and $\alpha $ a function variable. Let
$\mathrm{q,\, r(y, z)}$ be terms not containing $\alpha$ free,
with $\alpha$ and $\mathrm{y}$ free for $\mathrm{z}$ in
$\mathrm{r(y, z)}$. Then
$$
\mathrm{\vdash \exists \alpha \,[\, \alpha (0) = q
~\&~ \forall y\,
\alpha (y') = r(y, \alpha (y))\,]\,}.
$$

The two next lemmas are easy consequences of \textsc{Lemma} 5.3(b)
(\cite{FIM}).

\begin{lemma}
$\mathrm{\vdash _1 \exists ! \beta  \,[\, \beta (0) = x ~\&~
\forall z \,\beta (z')=\alpha (\langle \beta (z),z \rangle
)\,]\,}$.
\end{lemma}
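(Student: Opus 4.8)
The plan is to unfold the abbreviation $\exists!\beta\,C(\beta) \equiv \exists\beta\,[\,C(\beta)\;\&\;\forall\gamma\,(C(\gamma)\to\beta=\gamma)\,]$ and establish its two components separately: the \emph{existence} of a $\beta$ satisfying the two recursion clauses, and the \emph{uniqueness}, i.e.\ that any two solutions are extensionally equal. Recall throughout that $\beta=\gamma$ abbreviates $\forall z\,\beta(z)=\gamma(z)$, so that uniqueness is itself a statement to be proved by induction.

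For existence I would apply Lemma 5.3(b) of \cite{FIM} directly. Taking the constructed function variable there to be $\beta$, and setting $q \equiv x$ together with $r(z,w) \equiv \alpha(\langle w, z\rangle)$ (where $w$ is a fresh number variable occupying the place of the recursion value $\beta(z)$, after renaming the placeholder variable of the lemma away from our recursion variable $z$), the hypotheses of Lemma 5.3(b) are met: neither $q$ nor $r(z,w)$ contains $\beta$ free, and the required freeness conditions hold since $w$ occurs only inside $\alpha(\langle w,z\rangle)$. As $r(z,\beta(z))$ is then exactly $\alpha(\langle\beta(z),z\rangle)$, Lemma 5.3(b) yields $\vdash_1\exists\beta\,[\,\beta(0)=x\;\&\;\forall z\,\beta(z')=\alpha(\langle\beta(z),z\rangle)\,]$, which is precisely the existence clause.

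For uniqueness I would fix the existence witness $\beta$ and take an arbitrary $\gamma$ with $\gamma(0)=x$ and $\forall z\,\gamma(z')=\alpha(\langle\gamma(z),z\rangle)$, and prove $\forall z\,\beta(z)=\gamma(z)$ by the induction schema IND on $z$. The basis $\beta(0)=x=\gamma(0)$ follows from the first recursion equation for each and the equality axioms for $=$. For the induction step, assuming $\beta(z)=\gamma(z)$, replacement (the congruence of the primitive recursive pairing term) gives $\langle\beta(z),z\rangle=\langle\gamma(z),z\rangle$, and then the equality axiom for function variables $x=y\to\alpha(x)=\alpha(y)$ gives $\alpha(\langle\beta(z),z\rangle)=\alpha(\langle\gamma(z),z\rangle)$; combining this with the second recursion equation for $\beta$ and for $\gamma$ yields $\beta(z')=\gamma(z')$. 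IND then delivers $\forall z\,\beta(z)=\gamma(z)$, i.e.\ $\beta=\gamma$; discharging the assumption on $\gamma$ gives $\forall\gamma\,(C(\gamma)\to\beta=\gamma)$, which with the existence clause establishes $\exists!\beta\,C(\beta)$.

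The applications of the recursion equations are routine; the only point requiring care is the induction step, where one must pass from the numerical identity $\beta(z)=\gamma(z)$ first to the identity $\langle\beta(z),z\rangle=\langle\gamma(z),z\rangle$ of the compound terms and then to $\alpha(\langle\beta(z),z\rangle)=\alpha(\langle\gamma(z),z\rangle)$. This is exactly where the equality axiom for function variables and the replacement theorem (both available in {\bf M} by the preceding section) are invoked, and it is the step I expect to be the main, though modest, obstacle; everything else is propositional bookkeeping inside the definition of $\exists!$.
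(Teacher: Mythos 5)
Your proposal is correct and takes exactly the route the paper intends: the paper dispatches this lemma (together with its Lemma 4.2) with the single remark that it is an ``easy consequence'' of Lemma 5.3(b) of \cite{FIM}, i.e.\ existence comes from that lemma (instantiated just as you do, with $q \equiv \mathrm{x}$ and $\mathrm{r(z,w)} \equiv \alpha(\langle \mathrm{w,z}\rangle)$, the freeness condition being on the bound variable $\beta$ rather than on the parameter $\alpha$), and uniqueness is the routine induction using the equality axiom for function variables and replacement, which is precisely the argument you spell out. Your write-up simply fills in the details the paper leaves implicit.
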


\begin{lemma}
$\mathrm{\vdash _1 \forall y \exists ! m \, A(x, \alpha , y, m)}$.
\end{lemma}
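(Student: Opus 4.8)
The plan is to derive the claim directly from the preceding lemma, which furnishes a \emph{unique} function satisfying the recursion equations, so that the desired unique existence of a numerical value $m$ is just the value of that function at $y$. Abbreviate by $C(\beta)$ the matrix $\beta(0)=x \,\&\, \forall z\,\beta(z')=\alpha(\langle\beta(z),z\rangle)$, so that the formula $A(x,\alpha,y,w)$ is precisely $\exists\beta\,[\,C(\beta)\,\&\,\beta(y)=w\,]$ and the preceding lemma reads $\vdash_1 \exists!\beta\,C(\beta)$. I would fix $y$, prove $\exists!m\,A(x,\alpha,y,m)$, and finish by $\forall$-introduction on $y$.

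For existence, I would apply $\exists$-elimination to the $\exists\beta$ in $\exists!\beta\,C(\beta)$ to obtain a witness $\beta$ with $C(\beta)$. Since $\beta(y)=\beta(y)$ holds trivially, $C(\beta)\,\&\,\beta(y)=\beta(y)$ gives $A(x,\alpha,y,\beta(y))$ by $\exists$-introduction on the bound functor, and then $\exists m\,A(x,\alpha,y,m)$ by $\exists$-introduction on the number with witness term $\beta(y)$. For uniqueness, suppose $A(x,\alpha,y,m_1)$ and $A(x,\alpha,y,m_2)$; $\exists$-elimination produces $\beta_1,\beta_2$ with $C(\beta_1)\,\&\,\beta_1(y)=m_1$ and $C(\beta_2)\,\&\,\beta_2(y)=m_2$. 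The uniqueness clause of the preceding lemma entails that any two functions satisfying $C$ are extensionally equal, so $C(\beta_1)$ and $C(\beta_2)$ give $\beta_1=\beta_2$, i.e. $\forall z\,\beta_1(z)=\beta_2(z)$; specializing at $y$ yields $\beta_1(y)=\beta_2(y)$, whence $m_1=\beta_1(y)=\beta_2(y)=m_2$ by replacement for number equality. Together with existence this gives $\exists!m\,A(x,\alpha,y,m)$, and $\forall$-introduction on $y$ completes the proof.

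The argument is essentially routine once the preceding lemma is in hand; the only step needing care is the interplay of the two levels of unique existence. The function-level $\exists!\beta$ expresses uniqueness through the \emph{extensional} equality of functors, so passing from $\beta_1=\beta_2$ to the numerical identity $\beta_1(y)=\beta_2(y)$ must go through the abbreviation $u=v\equiv\forall x\,(u)(x)=(v)(x)$ and a specialization, rather than through an equality axiom for function application. Keeping the functor-level equality and the number-level equality distinct is thus the main (though purely bookkeeping) obstacle, and no genuine difficulty arises.
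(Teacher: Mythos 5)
Your proof is correct and follows exactly the route the paper intends: the paper gives no explicit argument, stating only that Lemmas 4.1 and 4.2 are ``easy consequences of Lemma 5.3(b) of [FIM]'', and your derivation of 4.2 from the unique existence of the recursion function (Lemma 4.1), with the careful handling of extensional functor equality when passing from $\beta_1=\beta_2$ to $\beta_1(y)=\beta_2(y)$, is precisely that easy consequence spelled out.
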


\begin{lemma}
$\mathrm{\vdash _2 rec(x, \alpha ,y )=z \leftrightarrow A(x,
\alpha , y, z)}$.
\end{lemma}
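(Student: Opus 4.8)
The plan is to derive the defining equivalence from the axiom $\mathrm{Rec}$ together with the unique-existence statement of the immediately preceding lemma, splitting the biconditional into its two implications. I work throughout in {\bf S$_2$} (that is, under $\vdash_2$), so that both the axiom $\mathrm{Rec}$ and every $\vdash_1$-theorem --- in particular the two preceding lemmas, since {\bf S$_1$} is contained in {\bf S$_2$} --- are at my disposal. The two ingredients I will use are the instance $\mathrm{A(x,\alpha,y,rec(x,\alpha,y))}$ of $\mathrm{Rec}$ and the uniqueness clause contained in $\mathrm{\forall y\,\exists! m\, A(x,\alpha,y,m)}$.

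For the forward implication $\mathrm{rec(x,\alpha,y)=z \rightarrow A(x,\alpha,y,z)}$ I would assume $\mathrm{rec(x,\alpha,y)=z}$, read off $\mathrm{A(x,\alpha,y,rec(x,\alpha,y))}$ from $\mathrm{Rec}$, and then invoke the replacement theorem for {\bf S$_2$} --- available by the discussion of Section 4.2, the equality axioms for the relevant symbols being provable there --- to substitute $z$ for $\mathrm{rec(x,\alpha,y)}$ in the distinguished argument place, obtaining $\mathrm{A(x,\alpha,y,z)}$.

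For the converse $\mathrm{A(x,\alpha,y,z) \rightarrow rec(x,\alpha,y)=z}$ I would again invoke $\mathrm{Rec}$ to get $\mathrm{A(x,\alpha,y,rec(x,\alpha,y))}$; since the hypothesis $\mathrm{A(x,\alpha,y,z)}$ exhibits a second witness for the fourth argument of $A$, the uniqueness part of the immediately preceding lemma forces $\mathrm{rec(x,\alpha,y)=z}$. Conjoining the two implications yields the stated biconditional.

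I expect no real difficulty here: this is the routine check that a function symbol introduced by a provably functional defining formula satisfies its graph equivalence. The only points that need care are bookkeeping ones --- that replacement is genuinely licensed in the extended system {\bf S$_2$} (which is exactly what Section 4.2 and the replacement theorem secure), and that the substitution into the fourth place of $A$ is capture-free, the distinguished occurrence of the defining variable sitting in the last conjunct $\mathrm{\beta(y)=w}$, outside the scope of the bound $z$ of the middle conjunct, so that no renaming conflict arises. These are precisely the hypotheses that will later make {\bf S$_2$} a definitional extension of {\bf S$_1$}.
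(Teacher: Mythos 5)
Your proof is correct and follows essentially the same route as the paper's: the forward implication via the axiom $\mathrm{Rec}$ and the replacement property of equality, and the converse via $\mathrm{Rec}$ together with the uniqueness clause of Lemma 4.2 ($\mathrm{\vdash_1 \forall y\,\exists!m\,A(x,\alpha,y,m)}$). The additional bookkeeping remarks about capture-freeness and the provability of the needed equality axioms in {\bf S$_2$} are consistent with the paper's parenthetical justification and raise no issues.
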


\begin{proof}
The formula $\mathrm{(a)\;\;A(x, \alpha, y, rec(x, \alpha , y))}$
is an axiom of {\bf S$_2$}.

(i) Assume $\mathrm{ rec(x, \alpha ,y )=z}.$ From this, (a) and
the replacement property of equality (which requires only the
predicate calculus with the equality axioms for = and the function
symbols of $\mathrm{ A(x, \alpha , y, z)}$) we get $\mathrm{ A(x,
\alpha , y, z)}$.

(ii) Assuming $\mathrm{ A(x, \alpha , y, z)}$, from  (a) with
Lemma 4.2 we obtain $\mathrm{rec(x, \alpha ,y )=z }$.
\end{proof}

\textsc{Remark 2}. The equality axioms for rec become now provable
from the above lemmas, or alternatively from the (equivalent)
definition of rec by REC, by the method of \cite{FIM}, Lemma 5.1
(see 4.2.2, on the treatment of equality).

\vskip 0.1cm

\textsc{Notation}. (i) Let $\mathrm{t}$ be a term. Let all the
free number variables\footnote{Note that the $\lambda$-prefixes
$\lambda$x, where x is any number variable, bind number
variables.} of $\mathrm{t}$ be among $\mathrm{x_0, \ldots , x}_k$,
and let $\mathrm{w}$ be a number variable not occurring in
$\mathrm{t}$. We will write $\mathrm {t^w}$ for the result of
replacing in $\mathrm{t}$, for each $i=0, \ldots ,k$, each free
occurrence of $\mathrm{x}_i$ by an occurrence of the term
$\mathrm{(w)}_i$. The same notation will be used for functors too.
Since the exponential will not appear in the proofs, there is no
chance of confusion by the use of this notation.

(ii) By the notation A(t) we represent as usual the result of
substituting a term t for all the free occurrences of x in a
formula A(x), and we tacitly assume that, if needed, some bound
variables are renamed, so that the substitution becomes free.
Similarly for functors, and also for many ``arguments''.

\begin{lemma}
Let $\mathrm{t, s}$ be terms and $\mathrm{u}$ a functor of
$\,\mathrm{{\bf S_1}}$. Let $\mathrm{x_0, \ldots , x}_k$ include
all the number variables occurring free in $\mathrm{t, u
\;\text{or} \;s}$, let $\,\mathrm{w}$ and  $\mathrm{v}$ be
distinct number variables not occurring in $\mathrm{t, u, s}$ and
$\gamma$ a function variable free for $\mathrm{v}$ in
$\mathrm{A(t^w, u^w, s^w, v)}$, not occurring free in
$\mathrm{A(t^w, u^w, s^w, v)}$. Then
$$
\mathrm{\vdash _1 \exists ! \gamma \forall w A(t^w,
u^w, s^w, \gamma (w))}.
$$
\end{lemma}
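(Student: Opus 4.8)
The plan is to read this off as an instance of the unique-choice principle, with Lemma 4.2 furnishing the hypothesis. The key observation is that the superscript substitution replaces every free occurrence of the number variables $x_0, \ldots, x_k$ in $t, u, s$ by the corresponding components $(w)_0, \ldots, (w)_k$. Since $x_0, \ldots, x_k$ exhaust the free number variables of $t, u, s$, the terms $t^w, s^w$ and the functor $u^w$ have $w$ as their only free number variable (any free function variables of $u$ merely ride along as parameters). Because $w$ is chosen fresh, it is not captured by the bound $\beta, z$ of $A$, so the substitutions carried out below are legitimate, renaming bound variables if necessary, per the standing convention.

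First I would instantiate Lemma 4.2, namely $\vdash_1 \forall y \exists! v\, A(x, \alpha, y, v)$ (renaming the bound output variable to $v$). Substituting simultaneously $t^w$ for $x$ and $u^w$ for $\alpha$, then specializing the quantifier $\forall y$ to the term $s^w$, and finally generalizing on the free variable $w$, I obtain
$$\vdash_1 \forall w \exists! v\, A(t^w, u^w, s^w, v).$$
Each step is a valid inference once the freeness hypotheses on $w, v, \gamma$ are in force: they ensure that $t^w, u^w, s^w$ are free for $x, \alpha, y$ in $A$ and that $v$ serves as a clean placeholder for the fourth argument.

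Finally I would appeal to the unique-existence form of $\mathrm{AC_{00}!}$ recorded in Remark 1,
$$\forall x \exists! y\, A(x, y) \rightarrow \exists! \alpha \forall x\, A(x, \alpha(x)),$$
taking $w$ in the role of $x$, $v$ in the role of $y$, the formula $A(t^w, u^w, s^w, v)$ in the role of $A(x, y)$, and the fresh function variable $\gamma$ in the role of $\alpha$. The provisos of the schema, that $\gamma$ be free for $v$ and that $\gamma$ not occur free in $A(t^w, u^w, s^w, v)$, are precisely the freeness hypotheses of the lemma. Hence from the displayed line the schema yields $\vdash_1 \exists! \gamma \forall w\, A(t^w, u^w, s^w, \gamma(w))$, which is the assertion.

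All the genuine content is concentrated in Lemma 4.2 (unique existence of the recursion value at each argument) together with unique choice; the only delicate part is the bookkeeping of substitutions and freeness conditions, which is exactly why the statement isolates fresh variables $w, v$ and demands the freeness of $\gamma$. The superscript device is what makes the reduction go through: after the substitution, $t^w, u^w, s^w$ depend only on $w$, so $A(t^w, u^w, s^w, v)$ is genuinely a binary relation between $w$ and $v$ (modulo the carried parameters), to which $\mathrm{AC_{00}!}$ applies directly.
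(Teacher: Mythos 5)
Your proof is correct and follows essentially the same route as the paper's: both obtain $\vdash_1 \forall w\, \exists! v\, A(t^w, u^w, s^w, v)$ from Lemma 4.2 by substitution and generalization (the paper first generalizes over $x_0,\ldots,x_k$ and then specializes each $x_i$ to $(w)_i$, which amounts to your direct substitution of $t^w, u^w, s^w$), and both then conclude by the unique-existence form of $\mathrm{AC_{00}!}$ from Remark 1. The bookkeeping of freeness conditions you describe matches the paper's handling.
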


\begin{proof}
Let x, y, z be distinct number variables different from
$\mathrm{w,x_0, \ldots , x}_k$ and $\alpha$ a function variable
not occurring free in t, free for $\delta$ in $\mathrm{A(x,\delta
,y, z)}$. By Lemma 4.2 we get $\mathrm{\vdash _1 \forall x \forall
\alpha \forall y \exists ! z \, A(x, \alpha , y, z)},$ so, after
specializing for t, u, s with the corresponding
$\forall$-eliminations, we get
 $\mathrm{\vdash _1
\forall x_0 \ldots \forall x_{\text{k}} \exists ! z \, A(t, u , s,
z)}.$ From this, after specializing for each $i=0, \ldots ,k$ for
(w)$_i$, we get $\mathrm{\vdash _1 \forall w \exists ! z \, A(t^w,
u^w, s^w, z)},$ and by AC$_{00}$! with \textsc{Remark 1} of 4.3.3
we get $\mathrm{\vdash _1 \exists ! \gamma \forall w A(t^w, u^w,
s^w, \gamma (w))}.$
\end{proof}

\textsc{Notation}. (i) We use the notation $\mathrm{E[\,a\,]}$ to
indicate some specified occurrences of a term or functor a in an
expression E. We will also use similarly $\mathrm{E[\,a_1, \ldots
,a_{\textit{k}}\,]}$ for $k> 0$ to indicate  some specified
occurrences of $k$ terms or functors. This notation may leave some
ambiguity regarding the indicated occurrences, but in each case we
will explain its use.

(ii) We use $\mathrm{\alpha (x_0, \ldots , x}_k)$ as an
abbreviation for $\mathrm{\alpha (\langle x_0, \ldots ,
x}_k\rangle )$.

\begin{lemma}
Let $\mathrm{t,s}$ be terms and $\mathrm{u}$ a functor of
$\,\mathrm{\bf S_2}$, let $\mathrm{x_0, \ldots , x}_k$ be all the
number variables occurring free in $\mathrm{t, u \;\text{or}\;
s}$, and let $\mathrm{w}$ be a number variable not occurring in
$\mathrm{A(t, u, s,v)}$. Let $\mathrm{E[\,\gamma (x_0, \ldots ,
x}_k)\,]$ be a formula of $\,\mathrm{\bf S_2}$ in which
$\mathrm{\gamma (x_0, \ldots , x}_k)$ is  not within the scope of
some function quantifier $\forall \alpha$ or $\exists \alpha$,
where $\alpha$ is a function variable occurring free in
$\mathrm{t, u \;\text{or} \;s}$ or $\alpha$ is $\gamma$, and
$\gamma $ is new for $\mathrm{E[\,rec(t, u, s)}\,]$, is free for
$\mathrm{v}$ in $\mathrm{A(t^w, u^w, s^w, v)}$, and does not occur
free in $\mathrm{A(t^w, u^w, s^w, v)}$, and  where
$\mathrm{E[\,rec(t, u, s)}\,]$ is obtained by replacing in
$\mathrm{E[\,\gamma (x_0, \ldots , x}_k)\,]$ each of the
(specified) occurrences of $\,\mathrm{\gamma (x_0, \ldots , x}_k)$
by an occurrence of  $\,\mathrm{rec(t, u, s)}$. Then
$$
\mathrm{\vdash _2 E[\,rec(t, u, s)\,] \leftrightarrow
\exists \gamma [\,\forall w  A(t^w, u^w, s^w,\gamma (w))
~\&~ E[\,\gamma (x_0, \ldots ,x}_k)\,]\,].
$$
\end{lemma}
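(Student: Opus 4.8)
The plan is to prove the biconditional by reducing both directions to a single core fact: for any function $\gamma$ satisfying the recursion, i.e.\ one for which $\forall w\, A(t^w, u^w, s^w, \gamma(w))$ holds, the terms $\mathrm{rec}(t,u,s)$ and $\gamma(x_0,\dots,x_k)$ are provably equal in $\mathbf{S_2}$. Once this equation is in hand, each direction becomes an application of the replacement theorem together with an $\exists$-introduction or $\exists$-elimination on $\gamma$; the existence of at least one such $\gamma$, needed for the forward direction, is exactly the content of Lemma 4.4.

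First I would establish the core equation. Assuming $\forall w\, A(t^w, u^w, s^w, \gamma(w))$, I specialize $w$ to the code $\langle x_0,\dots,x_k\rangle$. Since $(\langle x_0,\dots,x_k\rangle)_i = x_i$ is provable for each $i$, the replacement theorem collapses $t^{\langle x_0,\dots,x_k\rangle}$, $u^{\langle x_0,\dots,x_k\rangle}$, $s^{\langle x_0,\dots,x_k\rangle}$ back to $t, u, s$ (recall $t^w$ is $t$ with each $x_i$ replaced by $(w)_i$), so the specialized instance reads $A(t, u, s, \gamma(x_0,\dots,x_k))$, where $\gamma(x_0,\dots,x_k)$ abbreviates $\gamma(\langle x_0,\dots,x_k\rangle)$. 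Now Lemma 4.3, instantiated at $t, u, s, \gamma(x_0,\dots,x_k)$ in place of $x, \alpha, y, z$, converts $A(t, u, s, \gamma(x_0,\dots,x_k))$ into the desired equation $\mathrm{rec}(t,u,s) = \gamma(x_0,\dots,x_k)$.

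With the equation available, both directions are routine. For $\Rightarrow$, assume $E[\mathrm{rec}(t,u,s)]$; by Lemma 4.4 fix a (new) $\gamma$ with $\forall w\, A(t^w, u^w, s^w, \gamma(w))$, derive the equation, and use the replacement theorem to pass from $E[\mathrm{rec}(t,u,s)]$ to $E[\gamma(x_0,\dots,x_k)]$; conjoining with the recursion property and $\exists$-introducing $\gamma$ yields the right-hand side (the instantiation is dischargeable because $\gamma$ is new for $E[\mathrm{rec}(t,u,s)]$). For $\Leftarrow$, $\exists$-eliminate $\gamma$ from the right-hand side, derive the same equation from its first conjunct, and apply the replacement theorem to its second conjunct $E[\gamma(x_0,\dots,x_k)]$ to recover $E[\mathrm{rec}(t,u,s)]$.

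I expect the one delicate point to be the legitimacy of the replacement steps inside $E$. The replacement theorem (see 4.2.3) demands that the occurrences being rewritten lie outside the scope of any quantifier binding a variable free in either $\mathrm{rec}(t,u,s)$ or $\gamma(x_0,\dots,x_k)$ --- that is, outside $\forall\alpha$ or $\exists\alpha$ for any function variable $\alpha$ free in $t, u, s$ or equal to $\gamma$, and likewise free of capture of the number variables $x_0,\dots,x_k$ and of $\gamma$. The hypotheses of the lemma (that the specified occurrences of $\gamma(x_0,\dots,x_k)$ avoid the relevant function quantifiers, that $\gamma$ is new, and that $\gamma$ is free for $v$ and does not occur free in $A(t^w,u^w,s^w,v)$) are tailored precisely to license these rewrites. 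Carefully matching the stated scope restrictions to the provisos of the replacement theorem --- and tracking which free variables each of the two interchanged terms carries --- is the main bookkeeping obstacle, the remainder being formal manipulation.
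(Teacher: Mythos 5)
Your proposal is correct and follows essentially the same route as the paper's proof: Lemma 4.4 supplies the existence of a suitable $\gamma$, specialization of $\mathrm{w}$ at $\langle \mathrm{x_0,\ldots,x}_k\rangle$ together with Lemma 4.3 yields the key equation $\mathrm{rec(t,u,s)}=\gamma(\mathrm{x_0,\ldots,x}_k)$, and the replacement theorem plus $\exists\gamma$-introduction/elimination handles each direction. The only (immaterial) difference is the order in which the specialization and the appeal to Lemma 4.3 are performed in the forward direction.
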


\begin{proof}
(i) Assume  $\mathrm{(a)\;E[\,rec(t, u, s)\,]}$. We have
$\mathrm{(b)\;\vdash _2 \exists  \gamma \forall w A(t^w, u^w, s^w,
\gamma (w))}$ by Lemma 4.4, so we assume $\mathrm{(c)\;\forall w
A(t^w, u^w, s^w, \gamma (w))}.$ Now by Lemma 4.3 we have
$\mathrm{\forall w\, rec(t^w, u^w, s^w) = \gamma (w)},$ so by
specializing for $\mathrm{ \langle x_0, \ldots , x}_k\rangle$ we
get
$$
\mathrm{(d)\;\; \forall x_0 \ldots  \forall x_{\text{k}}\; rec(t,
u, s) = \gamma (x_0, \ldots , x}_k).
$$
Since the occurrences of $\mathrm{\gamma (x_0, \ldots , x}_k)$ are
not within the scope of some function quantifier $\forall \alpha$
or $\exists \alpha$ where $\alpha$ occurs free in s, t, or u or
$\alpha$ is $\gamma$, by the replacement theorem, from (a) we get
$\mathrm{(e)\;E[\,\gamma (x_0, \ldots , x}_k)\,],$ so with (c) and
$\&$-introduction and then $\exists\gamma$-introduction and
$\exists\gamma$-elimination discharging (c), after
$\rightarrow$-introduction we get the ``$\rightarrow$'' case from
(a).

(ii) Assume $\mathrm{(a)\;\;\forall w A(t^w, u^w, s^w, \gamma
(w))}$ and $\mathrm{(b)\;\;E[\,\gamma (x_0, \ldots , x}_k)\,].$ By
(a), specializing for $\mathrm{ \langle x_0, \ldots ,
x}_k\rangle$, we get $\mathrm{A(t, u, s, \gamma (x_0, \ldots ,
x}_k)),$ and by Lemma 4.3 we get $\mathrm{(c)\;\; \forall x_0
\ldots  \forall x_{\text{k}} \,rec(t, u, s) = \gamma (x_0, \ldots
, x}_k) .$ By the conditions on the bindings due to function
quantifiers, the replacement theorem applies and from (b), (c) we
get $\mathrm{E[\,rec(t, u, s)\,]}$, and after
$\exists\gamma$-elimination discharging (a) and (b), and with
$\rightarrow$-introduction, we get the ``$\leftarrow$'' case.
\end{proof}

\textsc{Terminology}. A term of the form $\mathrm{rec(t,u,s)}$ is
called a \emph{rec-term}. A term in which the constant rec does
not occur is called a \emph{rec-less term}. A term $\mathrm{rec(t,
u, s)}$ where rec does not occur in $\mathrm{t,u,s}$ is called a
\emph{rec-plain term}. An occurrence of the constant rec in a
formal expression is called a \emph{rec-occurrence}.

\begin{lemma}
To each formula $\mathrm{E}$ of $\,\mathrm{\bf S_2}$ there can be
correlated a formula $\mathrm{E'}$ of  $\,\mathrm{\bf S_1}$,
called the principal rec-less transform of $\,\mathrm{E}$, in such
a way that the elimination relations $\mathrm{I, II}$ hold, no
free variables are introduced or removed, and the logical
operators of the two-sorted predicate logic are preserved
(elimination relation $\mathrm{IV}$).
\end{lemma}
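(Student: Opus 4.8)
The plan is to define the principal rec-less transform $\mathrm{E}\mapsto\mathrm{E'}$ by a well-founded recursion whose only nontrivial step is a single application of Lemma 4.6, arranged so that the transform commutes with every connective and quantifier, making elimination relation IV true by construction. If $\mathrm{E}$ is rec-less, I set $\mathrm{E'}\equiv\mathrm{E}$; this is forced, and since every formula of $\mathcal{L}(\mathbf{S_1})$ is rec-less it delivers elimination relation I at once. If $\mathrm{E}$ is compound I put $(\neg\mathrm{B})'\equiv\neg\mathrm{B'}$, $(\mathrm{B}\,\&\,\mathrm{C})'\equiv\mathrm{B'}\,\&\,\mathrm{C'}$ and similarly for $\vee$ and $\rightarrow$, and $(\forall\xi\,\mathrm{B})'\equiv\forall\xi\,\mathrm{B'}$, $(\exists\xi\,\mathrm{B})'\equiv\exists\xi\,\mathrm{B'}$ for $\xi$ any number or function variable; these clauses are exactly elimination relation IV.

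All the real work sits in the prime-formula case. If $\mathrm{E}$ is prime and rec occurs in it, I select a rec-plain subterm $\mathrm{rec(t,u,s)}$ (one exists whenever rec occurs, by descending through the finitely many nested rec-occurrences to an innermost one), let $\mathrm{x_0,\ldots,x}_k$ list the free number variables of $\mathrm{t,u,s}$, choose a fresh $\gamma$, and replace every occurrence of $\mathrm{rec(t,u,s)}$ in $\mathrm{E}$ by $\gamma(\mathrm{x_0,\ldots,x}_k)$. By Lemma 4.6 the formula
$$
\mathrm{F}\;\equiv\;\mathrm{\exists \gamma \,[\,\forall w\, A(t^w, u^w, s^w, \gamma (w)) ~\&~ E[\,\gamma (x_0, \ldots ,x}_k)\,]\,]
$$
satisfies $\vdash_2\mathrm{E}\leftrightarrow\mathrm{F}$, and I set $\mathrm{E'}\equiv\mathrm{F'}$. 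The observation that makes this legitimate, and which I expect to be the main obstacle in a careless treatment, is the scope proviso of Lemma 4.6: the substituted $\gamma(\mathrm{x_0,\ldots,x}_k)$ must not fall under any $\forall\alpha$ or $\exists\alpha$ binding a free variable of $\mathrm{t,u,s}$ or binding $\gamma$. For nested rec-terms this would fail if one tried to pull $\gamma$ out past a function quantifier, but here Lemma 4.6 is invoked \emph{only on a prime formula}, which contains no function quantifiers at all, so the proviso holds vacuously; any surrounding function quantifiers are reintroduced afterwards, harmlessly, by the commuting clauses above.

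For well-foundedness I order formulas lexicographically by (number of rec-occurrences, syntactic length). Passing from $\mathrm{E}$ to $\mathrm{F}$ strictly lowers the rec-count, since the chosen rec-plain term is deleted while $\mathrm{A(t^w,u^w,s^w,\gamma(w))}$ and $\gamma(\mathrm{x_0,\ldots,x}_k)$ are rec-less; the compound clauses recurse into proper subformulas, lowering the length without raising the rec-count. Hence $\mathrm{F'}$, and with it $\mathrm{E'}$, is well defined and the mapping is effective. Elimination relation II then follows by induction along this recursion: the rec-less case is trivial, the prime case combines $\vdash_2\mathrm{E}\leftrightarrow\mathrm{F}$ with the inductive hypothesis $\vdash_2\mathrm{F'}\leftrightarrow\mathrm{F}$, and the compound cases are replacement of provable equivalents under the connectives and quantifiers. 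Finally a bookkeeping check confirms that no free variables are introduced or removed: $\gamma$ and $\mathrm{w}$ are bound, the free number variables $\mathrm{x_0,\ldots,x}_k$ reappear as the arguments of $\gamma$ in $\mathrm{E[\,\gamma(x_0,\ldots,x}_k)\,]$, and the free function variables of $\mathrm{t,u,s}$ survive inside $\mathrm{A(t^w,u^w,s^w,\gamma(w))}$.
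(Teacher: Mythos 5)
Your proposal is correct and follows essentially the same route as the paper: define $\,'\,$ to commute with all logical operators (which yields elimination relations I and IV by construction) and, on prime formulas, peel off rec-plain terms one at a time via the provable equivalence $\mathrm{\vdash_2 E[\,rec(t,u,s)\,] \leftrightarrow \exists\gamma[\,\forall w\, A(t^w,u^w,s^w,\gamma(w)) ~\&~ E[\,\gamma(x_0,\ldots,x_k)\,]\,]}$ --- this is Lemma 4.5, not ``Lemma 4.6'' as you cite it (the latter is the statement being proved) --- with II following by the same double induction and your scope observation (prime formulas contain no function quantifiers) being exactly why the proviso of Lemma 4.5 is harmless here. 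The only deviations are cosmetic but worth noting: you replace \emph{every} occurrence of the chosen rec-plain term where the paper replaces only the leftmost one, and you leave the choice of rec-plain term merely ``innermost'' rather than fixing a canonical (leftmost) selection as effectiveness requires and as the subsequent Lemmas 4.7--4.9, which are keyed to the paper's first-occurrence convention, presuppose.
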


\begin{proof}
The definition of E$'$ is done by induction on the number $g$ of
occurrences of the logical operators in E. The basis of the
induction consists in giving the definition for E prime; this is
done by induction on the number $q$ of occurrences of rec-terms in
E.

\textsc{Case} E is rec-less: then E$'$ shall be E.

\textsc{Case} E has $q>0$ rec-occurrences: let $\mathrm{rec(t, u,
s)}$ be the first (the leftmost) occurrence of a rec-plain term,
so that $\mathrm{E \equiv E[\,rec(t, u, s)}\,]$, and let
$\mathrm{x_0, \ldots , x}_k$ be all the free number variables of
t, u, s, w a number variable and $\gamma$ a function variable new
for both $\mathrm{E \equiv E[\,rec(t, u, s)}\,]$ and $\mathrm{A(t,
u, s, v)}$. Then we define
$$
\mathrm{E' \equiv
\exists \gamma [\,\forall w  A(t^w, u^w, s^w,\gamma (w)) ~\&~
[\,E[\,\gamma (x_0, \ldots ,x}_k)\,]\,]'\,],
$$
where $\mathrm{E[\,\gamma (x_0, \ldots , x}_k)\,]$ is the result
of replacing in E the specified (first) occurrence  of
$\,\mathrm{rec(t, u, s)}$ by an occurrence of $\mathrm{\gamma
(x_0, \ldots , x}_k)$. Then $\mathrm{E[\,\gamma (x_0, \ldots ,
x}_k)\,]$ is prime and contains $q-1$ occurrences of rec-terms,
and w is the only number variable free in $\mathrm{ A(t^w, u^w,
s^w,\gamma (w))}$. About the choice of the bound variables
$\gamma$ and w and the possibly necessary changes of the bound
variables of A(x, $\alpha$, y, v) to make the substitutions of
t$\mathrm{^w}$, u$\mathrm{^w}$, s$\mathrm{^w}$ free, all
permissible choices lead to congruent formulas.

The condition that the logical operators are preserved, i.e.
$\mathrm{(\neg \,A)' \equiv \neg \,(A')}$, $\mathrm{(A \circ B)'
\equiv A' \circ B',\;}$ for $\;\mathrm{\circ \equiv \;\rightarrow,
\& , \vee,\; }$ and $\;\mathrm{(Qx A(x))' \equiv Qx(A(x))',\;}$
for $\;\mathrm{Q \equiv \;\forall ,\exists }$, $\mathrm{(Q\alpha
A(\alpha ))' \equiv Q\alpha (A(\alpha ))'}$ for $\mathrm{Q \equiv
\;\forall ,\exists }$, determines in a unique way the definition
of $\,'\,$ for all formulas of $\mathrm{\bf S_2}$.

We immediately see that elimination relations I and IV hold.
Elimination relation II is now proved easily by induction on the
number of logical operators in E. The basis of the induction is
the case of E prime, and is proved (easily) by induction on the
number of rec-occurrences in E, using Lemma 4.5 and replacement.
\end{proof}

We still need to prove elimination relation III, so we have to
show: if $\mathrm{\Gamma \vdash_{S_2} E }$, then $\mathrm{\Gamma '
\vdash_{S_1}  E' }$. The proof depends on a sequence of lemmas
that follow. We will use the version with function variables of
Lemma 25 \cite{IM}, p. 408. This lemma provides useful facts, most
of them consequences of unique existence, for number variables, in
intuitionistic predicate logic with equality. Corresponding
results have been obtained by S. C. Kleene  for function
variables, in the two-sorted case (in a manuscript mentioned in
\cite{JRMPhD}); we can use this version thanks to Lemma 4.4. The
versions of \cite{IM} $^*$181 - $^*$190 with a function variable
instead of v are mentioned as $\mathrm{^*181^F}$ -
$\mathrm{^*188^F}$, $\mathrm{^*189_n^F}$, $\mathrm{^*190_n^F}$,
and there are also cases $\mathrm{^*189_f^F}$,
$\mathrm{^*190_f^F}$, $\mathrm{^*189_f^N}$, $\mathrm{^*190_f^N}$,
with variables whose sorts are obvious from the notation. We will
state the needed cases in the places we use them.

\begin{lemma}
Let $\mathrm{rec(t, u, s)}$ be any specified occurrence of a
rec-plain term in a prime formula $\,\mathrm{E}$ of $\,\mathrm{\bf
S_2}$, so that we have $\mathrm{E \equiv E[\,rec(t, u, s)}\,]$.
Then
$$
\mathrm{(a) \;\;\vdash _1 E' \leftrightarrow \exists \gamma
[\,\forall w  A(t^w, u^w, s^w,\gamma (w)) ~\&~ [\,E[\,\gamma (x_0,
\ldots ,x}_k)\,]\,]'\,],
$$
where the conditions on the variables are as in the definition of
$\;'$.
\end{lemma}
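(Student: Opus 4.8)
The plan is to prove (a) by induction on the number $q$ of occurrences of rec-terms in the prime formula $\mathrm{E}$, distinguishing cases according to the position of the specified occurrence $\mathrm{rec(t,u,s)}$ relative to the \emph{leftmost} rec-plain term of $\mathrm{E}$, since it is the latter that governs the definition of the transform $\,'\,$ given in Lemma 4.6. If $\mathrm{rec(t,u,s)}$ is itself the leftmost rec-plain term of $\mathrm{E}$, then the right-hand side of (a) is, by the very definition of $\mathrm{E'}$ (with the variables $\gamma, w$ chosen there), literally $\mathrm{E'}$, so (a) holds trivially.

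Suppose then that $\mathrm{rec(t,u,s)}$ is not leftmost, and let $\mathrm{rec(t_1,u_1,s_1)}$ be the leftmost rec-plain term, with associated fresh variables $\gamma_1, w_1$. Since both terms are rec-plain, neither occurs inside the arguments of the other, so the two occurrences are disjoint and substitutions at them commute. Unfolding the definition of $\mathrm{E'}$ gives
$$
\mathrm{E' \equiv \exists \gamma_1 \,[\,\forall w_1 A(t_1^{w_1}, u_1^{w_1}, s_1^{w_1}, \gamma_1(w_1)) ~\&~ [E_1]'\,]},
$$
where $\mathrm{E_1}$ is $\mathrm{E}$ with the leftmost rec-plain term replaced by $\mathrm{\gamma_1(\ldots)}$; this $\mathrm{E_1}$ is prime, has $q-1$ rec-terms, and still contains $\mathrm{rec(t,u,s)}$ as a rec-plain occurrence. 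Applying the induction hypothesis to $\mathrm{E_1}$ and this occurrence, $\mathrm{[E_1]'}$ is $\vdash_1$-equivalent to $\mathrm{\exists \gamma \,[\,\forall w A(t^w, u^w, s^w, \gamma(w)) ~\&~ [E_1[\gamma(x_0,\ldots,x_k)]]'\,]}$.

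I would then substitute this equivalence for $\mathrm{[E_1]'}$ inside the displayed expression for $\mathrm{E'}$ and permute the quantifiers $\exists\gamma_1$ and $\exists\gamma$ past the conjuncts in which the bound variable does not occur free; this regrouping is licensed by the freshness conditions on $\gamma, \gamma_1$ and is an instance of the function-variable logical facts $\mathrm{^*181^F}$--$\mathrm{^*190^F}$. The result rewrites $\mathrm{E'}$, up to $\vdash_1$-provable equivalence, as
$$
\mathrm{\exists \gamma \,[\,\forall w A(t^w, u^w, s^w, \gamma(w)) ~\&~ \exists \gamma_1 [\,\forall w_1 A(t_1^{w_1}, u_1^{w_1}, s_1^{w_1}, \gamma_1(w_1)) ~\&~ [E_1[\gamma(x_0,\ldots,x_k)]]'\,]\,]}.
$$
It then remains to recognize the inner block $\mathrm{\exists\gamma_1[\,\ldots~\&~[E_1[\gamma(\ldots)]]'\,]}$ as $\mathrm{[E[\gamma(x_0,\ldots,x_k)]]'}$: the formula $\mathrm{E[\gamma(x_0,\ldots,x_k)]}$, obtained by replacing the specified occurrence $\mathrm{rec(t,u,s)}$ by $\mathrm{\gamma(x_0,\ldots,x_k)}$, is prime and still has $\mathrm{rec(t_1,u_1,s_1)}$ as its leftmost rec-plain term (no rec is introduced, and none to the left of it is removed), so the definition of $\,'\,$ peels off exactly this block, its innermost transform being that of $\mathrm{E[\gamma(\ldots)]}$ with $\mathrm{rec(t_1,u_1,s_1)}$ replaced, which equals $\mathrm{E_1[\gamma(x_0,\ldots,x_k)]}$ because the two replacements commute. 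This yields precisely (a).

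I expect the main obstacle to be bookkeeping rather than logic: one must verify carefully that the specified rec-plain occurrence and the leftmost rec-plain occurrence are disjoint, so that replacing the specified one neither introduces a rec nor alters the identity of the leftmost rec-plain term, and that the fresh-variable conventions of the transform make every quantifier permutation and every replacement of provably equivalent subformulas legitimate. Once these points are secured, the equivalence (a) is a routine regrouping.
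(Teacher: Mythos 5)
Your proposal is correct and follows the paper's proof almost step for step: induction on the number $q$ of rec-occurrences, the trivial case when the specified occurrence is the leftmost rec-plain term, unfolding $\mathrm{E'}$ via the leftmost occurrence $\mathrm{rec(t_1,u_1,s_1)}$, applying the induction hypothesis to the inner prime formula with $q-1$ rec-occurrences, and permuting the existential function quantifiers by $\mathrm{^*190_f^F}$ and $\mathrm{^*78_f^F}$. The one genuine divergence is the final identification of the inner block with $\mathrm{[\,E[\,\gamma (x_0,\ldots ,x}_k)\,]\,]'$. The paper does this by a \emph{second} application of the induction hypothesis, to the prime formula $\mathrm{E[\,rec(t_1,u_1,s_1),\delta (x_0,\ldots ,x}_k)\,]$ with $\mathrm{rec(t_1,u_1,s_1)}$ as the specified occurrence --- which is precisely why the lemma is stated for an \emph{arbitrary} specified occurrence rather than only the leftmost, and which makes it irrelevant whether $\mathrm{rec(t_1,u_1,s_1)}$ is still leftmost after the replacement. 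You instead appeal directly to the definition of $\,'\,$, which obliges you to prove that $\mathrm{rec(t_1,u_1,s_1)}$ \emph{remains} the leftmost rec-plain occurrence in $\mathrm{E[\,\gamma (x_0,\ldots ,x}_k)\,]$. That claim is true, but your parenthetical justification (``no rec is introduced, and none to the left of it is removed'') does not address the real danger: a rec-term that was not rec-plain in $\mathrm{E}$ because it contained the specified occurrence among its arguments can \emph{become} rec-plain after the replacement, and one must check it cannot lie further left. It cannot, because any rec-term whose head lies to the left of that of $\mathrm{rec(t_1,u_1,s_1)}$ and which contains the (disjoint, further-right) occurrence $\mathrm{rec(t,u,s)}$ must, by the nesting of term occurrences, also contain $\mathrm{rec(t_1,u_1,s_1)}$ itself and so stays non-rec-plain. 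With that observation supplied, your route closes; the paper's second use of the induction hypothesis simply buys you out of this combinatorial bookkeeping, at the price of needing the stronger (arbitrary-occurrence) induction statement --- which both of you have anyway.
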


\begin{proof}
The proof is by induction on the number $q$ of occurrences of
rec-terms in the prime formula E. We will use the functional
version of $^*78$, \cite{IM}, p. 162,
$$
\mathrm{^*78_f^F\;\; \vdash \exists \alpha \exists \beta
\,D(\alpha , \beta ) \leftrightarrow \exists \beta \exists \alpha
\,D(\alpha , \beta )},
$$
and the following case of the functional version of  Lemma 25 of
\cite{IM},
\begin{multline*}
\mathrm{^*190_f^F\;\; \vdash \exists  \beta \,[\,F(\beta ) ~\&~
\exists \alpha \,D(\alpha , \beta )\,] \leftrightarrow \exists
\alpha \exists \beta \,[\, F(\beta ) ~\&~  D(\alpha , \beta
)\,],}\\ \text{where}\; \alpha \;\text{does not occur  free in}
\;\mathrm{F(\beta )}.
\end{multline*}
Cases $q=0$ or $q=1$ are trivial. For $q>1$, assume that Lemma 4.7
holds for prime formulas having $q-1$ rec-occurrences, and let E
be a prime formula with $q$ rec-occurrences. Let $\mathrm{rec(t,
u, s)}$ be a specified occurrence of a rec-plain term in E, so
$\mathrm{E \equiv E\,[\,rec(t, u, s)}\,]$. If $\mathrm{rec(t, u,
s)}$ is the first occurrence of a rec-plain term in E, then (a)
holds by the definition of $\,'\,$. If $\mathrm{rec(t, u, s)}$ is
not the first occurrence of a rec-plain term, then let
$\mathrm{rec(t_1, u_1, s_1)}$ be the first such, so that
$\mathrm{E \equiv E\,[\,rec(t_1, u_1, s_1), rec(t, u, s)\,]}$.
Then, by the definition of $\,'\,$, we have
$$
\mathrm{(b)\;\;E' \equiv
\exists \gamma [\,\forall w  A(t_1^w, u_1^w, s_1^w,\gamma (w))
~\&~ [\,E[\,\gamma (y_0, \ldots ,y}_l),\mathrm{rec(t, u,
s)}\,]\,]'\,],
$$
where $\mathrm{y_0, \ldots ,y}_l$ are the free number variables of
$\mathrm{t_1,u_1, s_1}$ and $\gamma$, w as in the definition of
$\,'\,$. By the inductive hypothesis, since $\mathrm{E[\,\gamma
(y_0, \ldots , y}_l),\mathrm{rec(t, u, s)}\,]$ has $q-1$
rec-occurrences, with the replacement theorem, from (b) we get
\begin{multline*}
\mathrm{(c)\;\;\vdash _1 E' \leftrightarrow \exists \gamma
[\,\forall w A(t_1^w, u_1^w, s_1^w,\gamma (w)) ~\&~ \exists \delta
[\,\forall w
A(t^w, u^w, s^w,\delta (w))}\\
\mathrm{ ~\&~ [\,E[\,\gamma (y_0, \ldots ,y}_l), \mathrm{\delta
(x_0, \ldots ,x}_k)\,]\,]'\,],
\end{multline*}
where $\mathrm{x_0, \ldots ,x}_k$ are the free number variables of
t, u, s. And by (c), $\mathrm{^*190_f^F}$, $\mathrm{^*78_f^F}$,
\begin{multline*}
\mathrm{(d)\;\;\vdash _1 E' \leftrightarrow \exists \delta
[\,\forall w A(t^w, u^w, s^w,\delta (w)) ~\&~ \exists \gamma
[\,\forall w
A(t_1^w, u_1^w, s_1^w,\gamma (w))}\\
\mathrm{ ~\&~ [\,E[\,\gamma (y_0, \ldots ,y}_l), \mathrm{\delta
(x_0, \ldots ,x}_k)\,]\,]'\,].
\end{multline*}
By the inductive hypothesis again, we have
\begin{multline*}
\mathrm{(e) \;\;\vdash _1 [\,E[\,rec(t_1, u_1, s_1), \delta (x_0,
\ldots ,x}_k)\,]\,]' \leftrightarrow
\\\mathrm{\exists \gamma [\,\forall w A(t_1^w, u_1^w, s_1^w,\gamma
(w))} \mathrm{ ~\&~ [\,E[\,\gamma (y_0, \ldots ,y}_l),
\mathrm{\delta (x_0, \ldots ,x}_k)\,]\,]'\,].
\end{multline*}
The left part of the $\leftrightarrow$ in (e) is just the result
of replacing the specified occurrence $\mathrm{rec(t, u, s)}$ by
an occurrence of $\mathrm{\delta (x_0, \ldots ,x}_k)$ in E. So, by
(d), (e) and the replacement theorem we get (a congruent of) (a).
\end{proof}

\begin{lemma}
Let $\,\mathrm{E \equiv E[\,rec(t, u, s)}\,]$ be any formula of
$\,\mathrm{\bf S_2}$ such that $\mathrm{rec(t, u, s)}$ is a
specified occurrence of a rec-plain term in $\mathrm{E}$ not in
the scope of any quantifier binding a free variable of
$\,\mathrm{rec(t, u, s)}$. Then, if $\,\mathrm{x_0, \ldots ,x}_k$
are the free number variables of $\mathrm{\,t, u, s}$ and the
conditions on $\gamma$, $\mathrm{w}$ are as in the definition of
$\,'\,$,
$$
\mathrm{\vdash _1 E' \leftrightarrow
\exists \gamma [\,\forall w  A(t^w, u^w, s^w,\gamma (w)) ~\&~
[\,E[\,\gamma (x_0, \ldots ,x}_k)\,]\,]'\,].
$$
\end{lemma}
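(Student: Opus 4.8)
The plan is to prove the equivalence by induction on the number $g$ of occurrences of logical operators in $\mathrm{E}$, promoting Lemma 4.7 from prime formulas to arbitrary ones. For the basis $g=0$ the formula $\mathrm{E}$ is prime, so it contains no quantifiers and the side-condition is vacuous; the claim is then exactly Lemma 4.7. For the inductive step I would argue by cases on the outermost logical operator of $\mathrm{E}$. In every case the specified occurrence of $\mathrm{rec(t,u,s)}$ lies inside a single immediate subformula $\mathrm{B}$, and its hypothesis restricts to $\mathrm{B}$: passing from $\mathrm{E}$ to $\mathrm{B}$ only removes an outermost quantifier, which by assumption binds no free variable of $\mathrm{rec(t,u,s)}$, while the free number variables $\mathrm{x_0,\ldots,x}_k$ of $\mathrm{t,u,s}$ and the fresh $\gamma,\mathrm{w}$ stay the same. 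Hence the inductive hypothesis applies to $\mathrm{B}$ and, writing $\mathrm{F(\gamma)\equiv\forall w A(t^w,u^w,s^w,\gamma(w))}$ and $\mathrm{H(\gamma)\equiv[\,B[\,\gamma(x_0,\ldots,x}_k)\,]\,]'$, it gives $\mathrm{\vdash_1 B'\leftrightarrow\exists\gamma[\,F(\gamma)~\&~H(\gamma)\,]}$.

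Next I would invoke the preservation of the logical operators by $\,'\,$ from Lemma 4.6: $\mathrm{E'}$ is built from the transforms of its immediate subformulas by the same operator, and so is $\mathrm{[\,E[\,\gamma(x_0,\ldots,x}_k)\,]\,]'$ from $\mathrm{H(\gamma)}$ (together with the unchanged transform of the other subformula, for a binary connective). This reduces the whole step to commuting the existential $\exists\gamma$ past the outermost operator: for instance, passing between $\mathrm{\neg\exists\gamma[\,F(\gamma)~\&~H(\gamma)\,]}$ and $\mathrm{\exists\gamma[\,F(\gamma)~\&~\neg H(\gamma)\,]}$ for negation; between $\mathrm{\exists\gamma[\,F(\gamma)~\&~H(\gamma)\,]\circ C'}$ and $\mathrm{\exists\gamma[\,F(\gamma)~\&~(H(\gamma)\circ C')\,]}$ for a binary connective $\circ$ (and symmetrically for the other subformula); and between $\mathrm{Qx\,\exists\gamma[\,F(\gamma)~\&~H(\gamma)\,]}$ and $\mathrm{\exists\gamma[\,F(\gamma)~\&~Qx\,H(\gamma)\,]}$ (similarly for $\mathrm{Q\alpha}$) for the quantifier cases. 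The side-condition is exactly what supplies the freeness requirements for these shifts, since it forces the bound number variable of a number quantifier to be none of $\mathrm{x_0,\ldots,x}_k$ and the bound function variable of a function quantifier to be neither free in $\mathrm{t,u,s}$ nor equal to $\gamma$; in all cases the bound variable is not free in $\mathrm{F(\gamma)}$.

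The real content, and the step I expect to be the main obstacle, is justifying these commutations, which are not intuitionistically valid for an arbitrary $\mathrm{F}$. They go through precisely because $\mathrm{F(\gamma)}$ is a uniqueness condition: by Lemma 4.4 we have $\mathrm{\vdash_1\exists!\gamma\,F(\gamma)}$, so the unique $\gamma$ with $\mathrm{F(\gamma)}$ behaves like a defined function and $\mathrm{\exists\gamma[\,F(\gamma)~\&~H(\gamma)\,]}$ is equivalent to $\mathrm{\forall\gamma[\,F(\gamma)\rightarrow H(\gamma)\,]}$. I would read off each required commutation from the functional version of Lemma 25 of \cite{IM} (the facts $\mathrm{^*181^F}$--$\mathrm{^*190^F}$, together with $\mathrm{^*78_f^F}$ for interchanging existentials), already the tool used in the proof of Lemma 4.7. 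The hard cases are negation, implication with the occurrence in the antecedent, and the universal quantifiers, where the naive intuitionistic shifting of $\exists\gamma$ fails and only the uniqueness of $\gamma$ furnished by Lemma 4.4 makes the passage legitimate. Combining the inductive hypothesis, Lemma 4.6 and the appropriate commutation in each case then yields $\mathrm{\vdash_1 E'\leftrightarrow\exists\gamma[\,F(\gamma)~\&~[\,E[\,\gamma(x_0,\ldots,x}_k)\,]\,]'\,]$, completing the induction.
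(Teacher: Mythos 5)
Your proposal is correct and takes essentially the same approach as the paper: induction on the complexity of $\mathrm{E}$ with Lemma 4.7 as the base, reducing each composite case to shifting $\exists \gamma\,[\,\mathrm{F}(\gamma)~\&~\cdot\,]$ past the outermost connective or quantifier, which is legitimate only because Lemma 4.4 supplies $\exists !\gamma\, \mathrm{F}(\gamma)$ and the functional versions of Lemma 25 of \cite{IM} then apply. The paper's proof is simply terser, listing the possible forms of $\mathrm{E}$ and writing out only the $\forall \alpha$ case via $\mathrm{^*189_f^F}$ as an illustration.
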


\begin{proof}
The proof is by induction on the complexity of the formula E. For
E prime apply Lemma 4.7. For E composite, let r be the term
$\mathrm{\gamma (x_0, \ldots ,x}_k)$. Then $\mathrm{E[\,r\,]\,}$
will be of one of the forms $\mathrm{\neg B[\,r\,]},\;$ $\mathrm{
B[\,r\,] ~\&~ C},\;$ $\mathrm{B  ~\&~ C[\,r\,]},\;$ $\mathrm{
B[\,r\,] \vee C},\;$ $\mathrm{B \vee C[\,r\,]},\;$ $\mathrm{
B[\,r\,] \rightarrow C},\;$ $\mathrm{B \rightarrow C[\,r\,]},\;$
$\mathrm{ \forall x\, B(x)[\,r\,]},\;$ $\mathrm{ \exists x
\,B(x)[\,r\,]},\;$ $\mathrm{ \forall \alpha \,B(\alpha
)[\,r\,]},\;$ $\mathrm{ \exists \alpha \,B(\alpha )[\,r\,]}$.
Thanks to Lemma 4.4 and the conditions on the variables in the
present lemma and in the definition of $\,'$, we can apply for
each form the corresponding case of the functional version of
Lemma 25 of \cite{IM}. In the case that we treat here we will use
\begin{multline*}
\mathrm{^*189_f^F\;\;\exists ! \beta F(\beta ) \vdash \exists
\beta \,[\,F(\beta ) ~\&~ \forall \alpha \,D(\alpha , \beta )\,]
\leftrightarrow \forall \alpha \exists \beta \,[\, F(\beta ) ~\&~
D(\alpha , \beta )\,],}\\ \text{where}\;\alpha\;\text{does not
occur free in}\; \mathrm{F(\beta )}.
\end{multline*}
\textsc{Case} $\mathrm{E \equiv \forall \alpha \,B(\alpha) \equiv
\forall \alpha \,B(\alpha )\,[\,rec(t,u,s)\,]}$,\\ where
$\mathrm{rec(t, u, s)}$ is the occurrence to be eliminated, so
$\alpha$ does not occur free in t, u, s. From the definition of
$\,'\,$ with the inductive hypothesis
$$
\mathrm{\vdash _1 E'
\leftrightarrow \forall \alpha \exists \gamma \,[\,\forall w
A(t^w, u^w, s^w,\gamma (w)) ~\&~ [\,B(\alpha )\,[\,\gamma (x_0,
\ldots ,x}_k)\,]\,]'\,].
$$
Since $\alpha$ is not free in $\mathrm{\forall w A(t^w, u^w,
s^w,\gamma (w))}$, by $\mathrm{^*189_f^F}$ with Lemma 4.4
$$
\mathrm{\vdash _1 E' \leftrightarrow
\exists \gamma \,[\,\forall w
A(t^w, u^w, s^w,\gamma (w)) ~\&~ \forall \alpha
[\,B(\alpha )\,[\,\gamma (x_0, \ldots ,x}_k)\,]\,]'\,].
$$
\end{proof}

\begin{lemma}
Let $\mathrm{E}$ be a formula of $\,\mathrm{\bf S_2}$ in which the
rec-plain term $\mathrm{rec(t, u, s)}$ has some occurrences such
that no (number or function) variable free in t, u or s becomes
bound in $\mathrm{E}$ by a universal or existential quantifier.
Let $\mathrm{E[\,\gamma (x_0, \ldots , x}_k)\,]$ be the result of
replacing in $\mathrm{E}$ one or more specified occurrences of
$\,\mathrm{rec(t, u, s)}$ by $\mathrm{\gamma (x_0, \ldots ,
x}_k)$, where $\mathrm{x_0, \ldots ,x}_k$ are all the free number
variables of $\,\mathrm{t, u, s}$ and $\gamma$, $\mathrm{w}$ as in
the definition of $\,'\,$. Then
$$
\mathrm{\vdash _1 E' \leftrightarrow
\exists \gamma [\,\forall w  A(t^w, u^w, s^w,\gamma (w)) ~\&~
[\,E[\,\gamma (x_0, \ldots ,x}_k)\,]\,]'\,].
$$
\end{lemma}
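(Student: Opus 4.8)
The plan is to prove the lemma by induction on the number $m \geq 1$ of specified occurrences of the rec-plain term $\mathrm{rec(t, u, s)}$ that are replaced by $\mathrm{\gamma (x_0, \ldots , x}_k)$, reducing the general case to the single-occurrence case already settled in Lemma 4.8. The hypothesis that no variable free in $\mathrm{t, u}$ or $\mathrm{s}$ becomes bound in $\mathrm{E}$ is used throughout: it guarantees that each of the specified occurrences meets the proviso of Lemma 4.8, and that the substitutions performed below stay free. For brevity write $\mathrm{F(\gamma )}$ for $\mathrm{\forall w\, A(t^w, u^w, s^w, \gamma (w))}$; by Lemma 4.4 we have $\mathrm{\vdash _1 \exists ! \gamma \, F(\gamma )}$.

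The base case $m = 1$ is exactly Lemma 4.8. For the inductive step, assume the statement whenever at most $m - 1$ occurrences are replaced, and let $\mathrm{E}$ carry $m$ specified occurrences. First I would apply Lemma 4.8 to a single one of them, say the leftmost, introducing a fresh function variable $\gamma_1$; this replaces that occurrence by $\mathrm{\gamma_1(x_0, \ldots , x}_k)$ and leaves the other $m - 1$ untouched. The resulting inner formula still satisfies the global hypothesis for those $m - 1$ occurrences, since replacing one occurrence by $\mathrm{\gamma_1(x_0, \ldots , x}_k)$ introduces no new quantifier over a free variable of $\mathrm{t, u, s}$ and $\gamma_1$ is new; so the inductive hypothesis applies with a second fresh variable $\gamma_2$. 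Substituting the inductive equivalence into the matrix of the equivalence given by Lemma 4.8 and pulling $\exists \gamma_2$ outward as in the proof of Lemma 4.7, I obtain
$$
\mathrm{\vdash _1 E' \leftrightarrow \exists \gamma_1 \exists \gamma_2 \, [\, F(\gamma_1) ~\&~ F(\gamma_2) ~\&~ G(\gamma_1, \gamma_2) \,]},
$$
where $\mathrm{G(\gamma_1, \gamma_2)}$ is the principal rec-less transform of $\mathrm{E}$ with the leftmost specified occurrence replaced by $\mathrm{\gamma_1(x_0, \ldots , x}_k)$ and the remaining $m - 1$ by $\mathrm{\gamma_2(x_0, \ldots , x}_k)$.

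The decisive step is to collapse $\gamma_1$ and $\gamma_2$ into one. Since $\mathrm{\vdash _1 \exists ! \gamma \, F(\gamma )}$, the conjuncts $\mathrm{F(\gamma_1)}$ and $\mathrm{F(\gamma_2)}$ force $\gamma_1 = \gamma_2$, and I would merge the two existentials by a contraction case of the functional version of Lemma 25 of \cite{IM}, in the style of the cases $\mathrm{^*189_f^F}$ and $\mathrm{^*190_f^F}$ used above,
$$
\mathrm{\exists ! \gamma \, F(\gamma ) \vdash \exists \gamma_1 \exists \gamma_2 \, [\, F(\gamma_1) ~\&~ F(\gamma_2) ~\&~ G(\gamma_1, \gamma_2) \,] \leftrightarrow \exists \gamma \, [\, F(\gamma ) ~\&~ G(\gamma , \gamma ) \,]}.
$$
Concretely, $\gamma_1 = \gamma_2$ and the replacement theorem rewrite each subterm $\mathrm{\gamma_2(x_0, \ldots , x}_k)$ of $\mathrm{G}$ as $\mathrm{\gamma_1(x_0, \ldots , x}_k)$, after which the now-redundant conjunct $\mathrm{F(\gamma_2)}$ is discharged using the existence half of Lemma 4.4. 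Because $\mathrm{\gamma (x_0, \ldots , x}_k)$ is rec-less, the transform $\,'\,$ commutes with the renaming $\gamma_1, \gamma_2 \mapsto \gamma$, so $\mathrm{G(\gamma , \gamma )}$ is literally $\mathrm{[\,E[\,\gamma (x_0, \ldots , x}_k)\,]\,]'$ with all $m$ specified occurrences replaced. Hence $\mathrm{\vdash _1 E' \leftrightarrow \exists \gamma \, [\, F(\gamma ) ~\&~ [\,E[\,\gamma (x_0, \ldots , x}_k)\,]\,]' \,]$, which is the assertion.

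The step I expect to be the main obstacle is this merging, for two reasons. First, to invoke uniqueness through the replacement theorem I must check that each subterm $\mathrm{\gamma_2(x_0, \ldots , x}_k)$ of $\mathrm{G}$ lies outside the scope of any function quantifier binding $\gamma_1$ or $\gamma_2$ and outside the scope of any number quantifier capturing some $\mathrm{x}_i$; otherwise the substitution would capture a variable and the contraction would fail. Second, I must verify that $\,'\,$ really does commute with the rec-less renaming $\gamma_1, \gamma_2 \mapsto \gamma$. Both points are secured by the hypothesis that no free variable of $\mathrm{t, u, s}$ becomes bound in $\mathrm{E}$, the freshness of $\gamma_1, \gamma_2$ and $\mathrm{w}$, and the fact (Lemma 4.6) that $\,'\,$ preserves the logical structure and introduces no free variables.
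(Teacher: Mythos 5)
Your proposal is correct and follows essentially the same route as the paper: induction on the number of replaced occurrences, with Lemma 4.8 for the base case and for peeling off the first occurrence, the inductive hypothesis for the remaining ones, and a contraction of the two existential function quantifiers justified by the unique existence from Lemma 4.4 (the paper cites the specific case $\mathrm{^*183^F}$ of the functional version of Lemma 25 of \cite{IM}, which is exactly the contraction principle you state). The only cosmetic difference is that you flatten the two existentials into a prenex $\exists\gamma_1\exists\gamma_2$ before contracting, whereas the paper keeps them nested in the form that $\mathrm{^*183^F}$ directly matches.
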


\begin{proof}
The proof is by induction on the number $q$ of occurrences of
$\mathrm{rec(t, u, s)}$ that are replaced (by $\mathrm{\gamma
(x_0, \ldots , x}_k)$). For $q$=1 the lemma follows by Lemma 4.8.
For the inductive step, let $\mathrm{E \equiv E[\,rec(t, u,
s)}\,]$ indicate $q$ specified occurrences of $\mathrm{rec(t, u,
s)}$ in E, and let $\mathrm{E[\,\gamma (x_0, \ldots , x}_k)\,]$ be
the result of replacing these $q$ specified occurrences  by
$\mathrm{\gamma (x_0, \ldots , x}_k)$. By Lemma 4.8 we have
$$
\mathrm{\vdash _1 E' \leftrightarrow
\exists \gamma [\,\forall w  A(t^w, u^w, s^w,\gamma (w)) ~\&~
[\,E[\,\gamma (x_0, \ldots ,x}_k), \mathrm{rec(t, u,
s)}\,]\,]'\,],
$$
where $\mathrm{E[\,\gamma (x_0, \ldots ,x}_k),
\mathrm{rec(t, u, s)}\,]\,$ is the result of replacing in E the
first of the $q$ specified occurrences of $\mathrm{rec(t, u, s)}$
by $\mathrm{\gamma (x_0, \ldots , x}_k)$. By the inductive
hypothesis with the replacement theorem,
\begin{multline*}
\mathrm{\vdash _1 E' \leftrightarrow \exists \gamma [\,\forall w
A(t^w, u^w, s^w,\gamma (w)) ~\&~ \exists \delta [\,\forall w
A(t^w, u^w, s^w,\delta (w))}\\
\mathrm{ ~\&~ [\,E[\,\gamma (x_0, \ldots ,x}_k), \mathrm{\delta
(x_0, \ldots ,x}_k)\,]\,]'\,],
\end{multline*}
where $\mathrm{ E[\,\gamma (x_0, \ldots ,x}_k), \mathrm{\delta
(x_0, \ldots ,x}_k)\,]\,$ is the result of replacing the other
$q-1$ specified occurrences of $\mathrm{rec(t, u, s)}$ in
$\mathrm{ E[\,\gamma (x_0, \ldots , x}_k), \mathrm{rec(t, u, s)}
\,]$    by $\mathrm{\delta (x_0, \ldots ,x}_k)$. By  Lemma 4.4 and
the following case of the functional version of Lemma 25,
\cite{IM},
$$
\mathrm{^*183^F\;\;\exists ! \beta \,F(\beta )
\vdash \exists  \alpha\,[\,F(\alpha ) ~\&~ C(\alpha , \alpha )\,]
\leftrightarrow
\exists \alpha \,[\, F(\alpha) ~\&~  \exists \beta \, [\,F(\beta )
~\&~ C(\alpha , \beta )\,]\,]\,,}
$$
where  $\alpha$ does not occur  free in  $\mathrm{F(\beta )}$ and
is free for $\beta$ in $\mathrm{F(\beta )}$ and in
$\mathrm{C(\alpha , \beta )}$,
$$
\mathrm{\vdash _1 E' \leftrightarrow
\exists \gamma [\,\forall w  A(t^w, u^w, s^w,\gamma (w)) ~\&~
[\,E[\,\gamma (x_0, \ldots ,x}_k), \mathrm{\gamma (x_0, \ldots
,x}_k)\,]\,]'\,],
$$
and, since $\mathrm{E[\,\gamma (x_0, \ldots
,x}_k), \mathrm{\gamma (x_0, \ldots ,x}_k)\,]$ is just
$\mathrm{E[\,\gamma (x_0, \ldots ,x}_k)\,]$, we get the result.
\end{proof}
In the next lemma we will use the following case from the
functional version of Lemma 25, \cite{IM}:
$$
\mathrm{^*182^F\;\;\exists
! \beta F(\beta ), \forall \beta \,C(\beta ) \vdash \exists  \beta
\,[\,F(\beta ) ~\&~ C(\beta )\,]\,}.
$$

\begin{lemma}
If $\,\mathrm{E}$ is any axiom of $\,\mathrm{\bf S_2}$, then
$\mathrm{\vdash _1 E'}$.
\end{lemma}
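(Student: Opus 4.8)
The plan is to verify $\vdash_1 E'$ axiom by axiom for $\mathbf S_2 = \mathbf M + \mathrm{Rec}$; this is the axiom case of elimination relation III, and together with the (routine) verification that the inference rules are preserved under $'$ it yields III by induction on $\mathbf S_2$-derivations, hence that $\mathbf S_2$ is a definitional extension of $\mathbf M$. The non-schematic mathematical axioms — those for $=,0,',+,\cdot$, the defining equations of $f_0,\dots,f_{26}$, and the equality axiom for function variables — are rec-less, so $E' \equiv E$ by the first clause in the definition of $'$ and $E$ is already an axiom of $\mathbf S_1$. For an instance of a propositional logical axiom schema, elimination relation IV (Lemma 4.6) makes $E'$ an instance of the same schema built from formulas of $\mathbf S_1$, hence an axiom of $\mathbf S_1$.

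For the axiom $\mathrm{Rec}$, $E \equiv A(x,\alpha,y,\mathrm{rec}(x,\alpha,y))$, the term $\mathrm{rec}(x,\alpha,y)$ is rec-plain with free number variables $x,y$, so the definition of $'$ yields
$$E' \;\equiv\; \exists\gamma\,[\,\forall w\,A((w)_0,\alpha,(w)_1,\gamma(w)) \ \& \ A(x,\alpha,y,\gamma(\langle x,y\rangle))\,].$$
Lemma 4.4, applied with $t\equiv x$, $u\equiv\alpha$, $s\equiv y$, gives $\vdash_1 \exists!\gamma\,\forall w\,A((w)_0,\alpha,(w)_1,\gamma(w))$; fixing such a $\gamma$ and instantiating its universal conjunct at $w\equiv\langle x,y\rangle$, using the provable $(\langle x,y\rangle)_0=x$ and $(\langle x,y\rangle)_1=y$, produces the second conjunct. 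Hence $\vdash_1 E'$.

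The remaining axioms — the instances of $\mathrm{IND}$, of $\lambda$-conversion, of $\mathrm{AC_{00}!}$, and the quantifier logical schemas $\forall x\,B(x)\rightarrow B(t)$ and $B(t)\rightarrow\exists x\,B(x)$ — all reduce, after pushing $'$ through the outer connectives and quantifiers by IV, to comparing the transform $(B(t))'$ of a substitution instance with the transform $(B(x))'$ of the matrix. Since $t$ is free for $x$, its substituted occurrences are uncaptured, so when $t$ is a rec-plain term $\mathrm{rec}(p,q,r)$ with free number variables $z_0,\dots,z_l$ Lemma 4.9 gives
$$\vdash_1 (B(t))' \ \leftrightarrow\ \exists\gamma\,[\,\forall w\,A(p^w,q^w,r^w,\gamma(w)) \ \& \ (B(\gamma(z_0,\dots,z_l)))'\,],$$
and the general case follows by induction on the number of rec-terms in $t$ via Lemmas 4.8 and 4.9. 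The first conjunct determines $\gamma$ uniquely (Lemma 4.4), while the antecedent $\forall x\,(B(x))'$ supplies the second conjunct for every $\gamma$; combining these with $\mathrm{^*182^F}$ gives the $\forall$-elimination axiom, and the dual reading, by $\exists$-introduction and without uniqueness, gives $B(t)\rightarrow\exists x\,B(x)$. $\mathrm{IND}$ is handled identically, substituting the rec-less terms $0$ and $x'$ and then applying induction in $\mathbf S_1$ to $(B(x))'$; $\lambda$-conversion and $\mathrm{AC_{00}!}$ likewise, the latter in the compact form of Remark 1 of 4.3.3.

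The main obstacle is precisely this interaction of $'$ with substitution: $(B(t))'$ is not literally a substitution instance of $(B(x))'$, because pulling a rec-plain subterm out of $B$ indexes the new function variable by the free number variables actually present, and those shift both under the substitution $[t/x]$ and under the $t^w$-replacement of free variables by the projections $(w)_i$. Bridging $(B(\gamma(z_0,\dots,z_l)))'$ and $(B(x))'[\gamma(z_0,\dots,z_l)/x]$ up to provable equivalence in $\mathbf S_1$ therefore rests on the uniqueness of the pulled-out functions (Lemma 4.4) and on the coding identities for $\langle\,\cdot\,\rangle$ and $(\,\cdot\,)_i$, with the eliminations ordered so that the relevant cases of the functional version of Lemma 25 of \cite{IM} apply; the subcases where the substituted term meets a $\lambda$-prefix or the uniqueness clause of $\exists!$ will demand the most care.
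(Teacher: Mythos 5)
Your overall decomposition matches the paper's: rec-less axioms are fixed by $'$, propositional schemas go through because $'$ preserves the logical operators, the quantifier schemas, IND, $\lambda$-conversion and $\mathrm{AC_{00}!}$ reduce to comparing $[\,\mathrm{B(r)}\,]'$ with a substitution instance of $[\,\mathrm{B(x)}\,]'$, and rec-occurrences in the substituted term are handled by Lemma 4.9 together with Lemma 4.4 and $\mathrm{^*182^F}$. But your treatment of the axiom Rec contains a concrete error. $\mathrm{E \equiv A(x,\alpha,y,rec(x,\alpha,y))}$ is not a prime formula: it is $\mathrm{\exists\beta\,[\,\beta(0)=x ~\&~ \forall z\,\beta(z')=\alpha(\langle\beta(z),z\rangle) ~\&~ \beta(y)=rec(x,\alpha,y)\,]}$, and since $'$ commutes with $\exists\beta$, $\&$ and $\forall z$, the correct transform is $\mathrm{E'\equiv\exists\beta\,[\,\beta(0)=x ~\&~ \forall z\,\beta(z')=\alpha(\langle\beta(z),z\rangle) ~\&~ [\,\beta(y)=rec(x,\alpha,y)\,]'\,]}$, not the formula $\mathrm{\exists\gamma\,[\,\forall w\,A(\ldots,\gamma(w)) ~\&~ A(x,\alpha,y,\gamma(\langle x,y\rangle))\,]}$ you wrote down. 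Your argument proves your formula, which is only provably equivalent to $\mathrm{E'}$; to prove $\mathrm{E'}$ itself you must show that the \emph{specific} $\beta$ introduced by the outer quantifier satisfies $\mathrm{\beta(y)=\gamma(\langle x,y\rangle)}$, and for that the uniqueness clause of Lemma 4.1 is indispensable: one specializes $\mathrm{\forall w\,A(\ldots)}$ at $\mathrm{\langle x,y\rangle}$ to obtain a witness $\delta$ of the inner existential of $\mathrm{A}$, proves $\beta=\delta$ by uniqueness, and only then concludes $\mathrm{\beta(y)=\delta(y)=\gamma(x,y)}$. This identification step is the substance of the paper's case for Rec and is absent from your sketch.

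The second issue is that the equivalence $\mathrm{\vdash_1 [\,B(x)\,]'(x/r)\leftrightarrow[\,B(r)\,]'}$, which you rightly single out as the main obstacle, is exactly the hard part of the lemma and you leave it unproved. The paper establishes it by a double induction (on logical operators, then on rec-occurrences in prime formulas) resting on an auxiliary fact you do not state: for rec-less terms $\mathrm{s,t}$ with free number variables among $\mathrm{x_0,\ldots,x_{\textit{k}}}$, one has $\mathrm{\forall x_0\ldots\forall x_{\textit{k}}\;s=t\vdash_1[\,D[s]\,]'\leftrightarrow[\,D[t]\,]'}$; the hypothesis of this fact is supplied by comparing the two pulled-out functions $\gamma$ and $\delta$ via the uniqueness of Lemma 4.2 and the coding identities, which is where the ingredients you name actually do their work. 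Saying that the bridge \emph{rests on} uniqueness and the coding identities identifies the right tools but does not carry out the induction, and the subcases you defer (capture under $\lambda$-prefixes, the uniqueness clause of $\exists!$ in $\mathrm{AC_{00}!}$) are precisely where an unexamined argument could fail. As it stands the proposal is a correct outline of the paper's strategy with one wrong computation and the central sublemma missing.
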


\begin{proof}
(i) If E \emph{is an axiom of $\mathrm{\bf S_2}$ by an axiom
schema of the propositional logic}, then E$'$ is equivalent in
$\mathrm{\bf S_1}$ to  an axiom of $\mathrm{\bf S_1}$ by the same
axiom schema. This follows by the fact that, by its definition,
the translation $'$ preserves the logical operators, together with
the fact that the translations of different instances of the same
formula may differ only in their bound variables, so they are
congruent, hence equivalent. So by the replacement theorem for
equivalence, $\mathrm{\vdash _1 E'}$.

\vskip 0.1cm

(ii) \emph{The logical axioms for the quantifiers} need a
different treatment. We give the proof for axiom schema 10N, and
the other cases follow similarly.

\emph{Axiom schema} 10N: $\mathrm{E \equiv \forall x \,B(x)
\rightarrow B(r)}$, where r is a term of $\mathrm{\bf S_2}$ free
for $\mathrm{x}$ in $\mathrm{B(x)}$. Then $\mathrm{E' \equiv
\forall x \,[\,B(x)\,]' \rightarrow [\,B(r)\,]'}$.

Ia. If x has no free occurrences in B(x), or r is rec-less and x
does not occur free in any rec-occurrence, we simply choose the
same bound variables at corresponding steps in the elimination
processes in B(x) and B(r), so the resulting formula
$\mathrm{\forall x\, C(x) \rightarrow C(r)}$ is congruent to E$'$
and is an axiom of $\mathrm{\bf S_1}$ by 10N, so $\mathrm{\vdash
_1 E'}$.

Ib. If x occurs free in some of the rec-occurrences of B(x) and r
is rec-less, we show first, by induction on the number $g$ of
logical operators in B(x),
$$
\mathrm{(A)\;\;\vdash _1 [\,B(x)\,]'(x/r)
\leftrightarrow [\,B(r)\,]'}.
$$
The case of B(x) prime is proved by induction on the number $q$ of
rec-occurrences in B(x) as follows. For $q>0$, consider the first
occurrence of a rec-plain term in B(x). If x does not occur free
in it, then (A) follows from the inductive hypothesis and the
definition of $\,'\,$. Otherwise, let $\mathrm{rec(t(x), u(x),
s(x))}$  with x occurring free in t(x), u(x) or s(x) be the
considered occurrence, so $\mathrm{B(x) \equiv B(x)[rec(t(x),
u(x), s(x))]}$, and let $\mathrm{x_0, \ldots , x_{\textit{k}},x}$
be all the free number variables of t(x), u(x), s(x). Then, by the
definition of $\,'\,$ we get
$$
\mathrm{(a)\,\vdash _1 [\,B(x)\,]' \leftrightarrow \exists
\gamma [\,\forall w
A(t(x)^w, u(x)^w, s(x)^w, \gamma (w))
 ~\&~ [\,B(x)\,[\,\gamma (x_0, \ldots , x_{\textit{k}},
x)]\,]'\,]},
$$
where w, $\gamma$ are new variables and all the bound variables
are chosen so that r is free for x in the displayed formula.

For simplicity, let x, z be the free number variables of r, with z
not included in $\mathrm{x_0, \ldots , x_{\textit{k}},x}$, so that
$\mathrm{r \equiv r(x, z)}$. By eliminating the corresponding
rec-occurrence from B(r) by the definition of $\,'\,$ we get
$$
\mathrm{(b)\;\vdash _1\,[\,B(r)\,]' \leftrightarrow \exists
\delta \,[\,\forall w
A(t(r)^w, u(r)^w, s(r)^w, \delta (w))
 ~\&~ [\,B(r)\,[\,\delta (x_0, \ldots ,
x_{\textit{k}},x,z)\,]\,]']}.
$$
From (a) by $\forall$-introduction and elimination, since  w is
the only number variable free in the formula
 $\mathrm{A(t(x)^w, u(x)^w, s(x)^w, \gamma (w))}$, we have
\begin{multline*}
\mathrm{(a1)\;\;\vdash _1 [\,B(x)\,]'(x/r) \leftrightarrow \exists
\gamma
[\forall w A(t(x)^w, u(x)^w, s(x)^w, \gamma (w))}\\
 \mathrm{~\&~ [\,B(x)\,[\,\gamma (x_0, \ldots , x_{\textit{k}},
x)\,]\,]'(x/r)]}.
\end{multline*}
Assume $\mathrm{(a2)\,[\,B(x)\,]'(x/r)}.$ We will get
$\mathrm{[\,B(r)\,]'}$. By (a1) and (a2) we may assume
$$
\mathrm{(a3)\;\;\forall w
A(t(x)^w, u(x)^w, s(x)^w, \gamma (w))}\\
\mathrm{ ~\&~ [\,B(x)\,[\,\gamma (x_0, \ldots , x_{\textit{k}},
x)\,]\,]'(x/r)\,}.
$$
By the inductive hypothesis,
$$
\mathrm{(a4)\;\;\vdash_1
[\,B(x)\,[\,\gamma (x_0, \ldots , x_{\textit{k}},
x)]\,]'(x/r) \leftrightarrow
[\,B(r)\,[\,\gamma (x_0, \ldots , x_{\textit{k}},
r(x,z))\,]\,]'}.
$$
By Lemma 4.4 we may assume $ \mathrm{(a5)\;\;\forall w \,A(t(r)^w,
u(r)^w, s(r)^w, \delta (w))}. $ Now, specializing for $\mathrm{q_0
\equiv \langle x_0, \ldots ,x_{\textit{k}}, r(x,z)\rangle }$ and
$\mathrm{q_1 \equiv \langle x_0, \ldots ,x_{\textit{k}},x,z\rangle
}$ from the first conjunct of (a3) and from (a5) respectively,
since $\mathrm{t(x)^{q_0}}$ is just $\mathrm{t(r)^{q_1}}$, and
similarly for u(x), s(x), using Lemma 4.2 ($\mathrm{\vdash _1
\forall y \exists ! m \, A(x, \alpha , y, m)}$) and then
$\forall$-introds. we get
$$
\mathrm{(c)\;\;\forall x_0 \ldots
\forall x_{\textit{k}} \forall x \forall z\,\gamma
(x_0, \ldots , x_{\textit{k}}, r(x,z))= \delta (x_0, \ldots ,
x_{\textit{k}},x, z)}.
$$

We will use now the following

\textsc{Fact}. For any formula $\mathrm{D[x]}$ of {\bf S$_2$} with
a specified occurrence of x indicated, if s, t are rec-less terms,
$\mathrm{x_0, \ldots ,x_{\textit{k}}}$ include all the free number
variables of s and t, and no free function variable of s or t
becomes bound in the corresponding replacements, then
$$
\mathrm{\forall x_0 \ldots
\forall x_{\textit{k}}\; s = t \vdash _1
 [D[s]]' \leftrightarrow [D[t]]'}.
$$
The proof is by induction on the number of logical operators in
$\mathrm{D[x]}$. The basis (case of prime formulas) is obtained by
induction on the number $q$ of rec-occurrences in the formula, by
use of the replacement theorem for $\mathrm{\bf S_1}$.

\vskip 0.1cm

By the above fact and (c), we get now
$$
\mathrm{(d)\;\;\
[\,B(r)\,[\,\delta (x_0, \ldots , x_{\textit{k}},x,z)]\,]'
\leftrightarrow [\,B(r)\,[\,\gamma (x_0, \ldots ,
x_{\textit{k}},r(x,z))]\,]'}.
$$
From the second conjunct of (a3) with (a4) and with (d) we get
$$
\mathrm{(e)\;\;\
[\,B(r)\,[\,\delta (x_0, \ldots , x_{\textit{k}},x,z)]\,]'}.
$$
Now from (a5) and (e) with $\exists \delta$-introduction,
 discharging (a3) and (a5), we get the
right part of (b) and finally $\mathrm{[\,B(r)\,]'}$.

The other direction of the equivalence of (A) is obtained
similarly and the case of composite formulas (inductive step for
$g>0$) follows easily. From (A) follows immediately that
$\mathrm{E' \equiv \forall x \,[\,B(x)\,]' \rightarrow
[\,B(r)\,]'}$ is a congruent of an axiom of $\mathrm{\bf S_1}$ by
the same axiom schema, so $\mathrm{\vdash _1E'}$.

\vskip 0.1cm

II. If r has some rec-occurrences, the result is obtained by an
induction on the number $q$ of these occurrences. If r contains
$q$ ($q>0$) rec-occurrences, we consider the first rec-plain
occurrence in r, say rec(t, u, s), so
$$
\mathrm{(a) \;\; E \equiv E (r\,[\,rec(t, u, s)\,]\,) \equiv
\forall x\, B(x) \rightarrow B(r\,[\,rec(t, u, s)\,])}.
$$
By Lemma 4.9,
$$
\mathrm{(b)\;\;\vdash _1 E' \leftrightarrow
\exists \gamma \,[\,\forall w  A(t^w, u^w, s^w,\gamma (w)) ~\&~
[\,E(r\,[\,\gamma (x_0, \ldots ,x_{\textit{k}})\,]\,)\,]'\,]},
$$
with $\gamma$ new for E and $\mathrm{x_0, \ldots
,x_{\textit{k}}}$,w as usual. By the inductive hypothesis,
$$
\mathrm{(c) \;\; \vdash _1
[\,\forall x\, B(x) \rightarrow B(r\,[\,\gamma (x_0, \ldots
,x_{\textit{k}})\,])\,]'}.
$$
But $\mathrm{E(r\,[\,\gamma (x_0, \ldots ,x_{\textit{k}})\,]\,)}$
is just $\mathrm{ \forall x \,B(x) \rightarrow B(r\,[\,\gamma
(x_0, \ldots ,x_{\textit{k}})\,])}$, so by Lemma 4.4 with
$\mathrm{^*182^F}$, from (b) and the universal closure of (c) we
get $\mathrm{\vdash _1 E'}$.

\vskip 0.1cm

(iii) \emph{The axiom schema of induction}. $ \mathrm{E \equiv
A(0) ~\& ~ \forall x( A(x) \rightarrow A(x') ) \rightarrow A(x)}.$
Then $\mathrm{E' \equiv [\,A(0)\,]' ~\& ~ \forall x( [\,A(x)\,]'
 \rightarrow
[\,A(x')\,]')  \rightarrow [\,A(x)\,]'} $, and the result follows
by the arguments of (ii), Ia and Ib.

\vskip 0.1cm

(iv) \emph{The axiom schema of $\lambda$-conversion}. $\mathrm{E
\equiv (\lambda x.r(x))(p) = r(p)}$, where r(x), p are terms of
$\mathrm{\bf S_2}$ and p is free for x in r(x).

If r(x), p are rec-less, then E$'$ is E and is an axiom of
$\mathrm{\bf S_1}$ by the same schema.

If there are $q$ ($q>0$) rec-occurrences in total in r(x) and p,
we consider the first occurrence of a rec-plain term in r(x) (in
case that r(x) is rec-less, then consider the first such in p),
say rec(t, u, s), with free number variables the $\mathrm{x_0,
\ldots , x}_k$, and $\gamma$, w new. Then
$$
\mathrm{ E \equiv E [\,rec(t, u, s)\,] \equiv
(\lambda x.r(x)[\,rec(t, u, s)\,])(p) \!=\! (r(x)[\,rec(t, u,
s)\,])(x/p)}.
$$
By Lemma 4.9,
\begin{multline*}
\mathrm{(a)\;\;\vdash _1 E' \leftrightarrow \exists \gamma
[\,\forall w  A(t^w, u^w, s^w,\gamma (w)) ~\&~}\\
\mathrm{ [\,(\lambda x.r(x)\,[\,\gamma (x_0, \ldots
,x_{\textit{k}})\,])(p) = (r(x)\,[\,\gamma (x_0, \ldots
,x_{\textit{k}})\,])(x/p)\,]'\,]}.
\end{multline*}
The instance of $\lambda$-conversion shown in (a) (x may or may
not be one of $\mathrm{x_0, \ldots , x}_k$, but in both cases p is
free for x in $\mathrm{r(x)\,[\,\gamma (x_0, \ldots
,x_{\textit{k}})\,]}$) has $q-1$ rec-occurrences in the terms
involved, so the induction hypothesis applies and we obtain
$$
\mathrm{(b)\;\;
\vdash _1[\,(\lambda x.r(x)\,[\,\gamma (x_0, \ldots
,x_{\textit{k}})\,])(p) = (r(x)\,[\,\gamma (x_0, \ldots
,x_{\textit{k}})\,])(x/p)\,]'}.
$$
By Lemma 4.4 with $\mathrm{^*182^F}$, we get now from (a) and (b)
that $\mathrm{\vdash _1E'}$. In case that the rec-occurrence to be
eliminated is in the term p, the argument is similar.

\vskip 0.1cm

(v) \emph{The axiom schema} $\mathrm{AC_{00}!}$.
$$
\mathrm{E \equiv \forall x \exists y \,[\, A(x, y) ~\& ~
\forall z( A(x, z) \rightarrow y = z )\,]
 \rightarrow \exists \alpha \forall x A(x, \alpha (x))}.
$$
By the arguments of (ii), Ia and Ib, E$'$ is (congruent to) an
axiom of $\mathrm{\bf S_1}$ by the same axiom schema, so
$\mathrm{\vdash _1E'}$.

\vskip 0.1cm

(vi) \emph{The axiom $\mathrm{Rec \;\;A(x, \alpha ,y, rec(x,
\alpha ,y) )}$}.
$$
\mathrm{E \equiv \exists \beta \,[\,\beta (0)=x ~\&~ \forall z\,
\beta (z')=\alpha (\langle \beta (z),z\rangle ) ~\&~ \beta
(y)=rec(x, \alpha ,y) \,]\,,}
$$
and so we have
$$
\mathrm{E' \equiv\exists \beta \,[\,\beta (0)=x
~\&~ \forall z\, \beta (z')=\alpha (\langle \beta (z),z\rangle )
~\&~ [\,\beta
(y)=rec(x, \alpha ,y) \,]'\,]\,}.
$$
We have already shown
$$
\mathrm{(a)\;\;\vdash _1[\,\beta (y)=rec(x, \alpha ,y) \,]'
\leftrightarrow \exists \gamma \,[\,\forall w A(x^w, \alpha ^w, y^w,
\gamma (w)) ~\&~\beta (y) = \gamma (x, y)\,]\,.}
$$
By Lemma 4.1 we may assume
\begin{multline*}
\mathrm{(b)\; \;\beta (0) = x ~\&~ \forall z \beta (z')=\alpha
(\langle \beta (z),z \rangle )}\\
\mathrm{ ~\&~\forall \delta ( \delta (0) = x ~\&~ \forall z \delta
(z')=\alpha (\langle \delta (z),z \rangle ) \rightarrow \beta =
\delta )}.
\end{multline*}
By Lemma 4.4 we may assume $\mathrm{(c)\;\;\forall w A(x^w, \alpha
^w, y^w, \gamma (w))}$, and from this by specializing for
$\mathrm{\langle x, y \rangle}$ we may also assume
$$
\mathrm{(c1)\;\; \delta (0) = x ~\&~ \forall
z \,\delta (z')=\alpha (\langle \delta (z),z \rangle )
 ~ \&~ \delta
(y) = \gamma (x, y).}
$$
By (b) and (c1) we get $\beta = \delta$, and then also
$\mathrm{(d)\;\;\beta (y) = \gamma (x, y)}$. After $\exists
\delta$-elimination discharging (c1), we get from (c), (d),
$$
\mathrm{(e)\;\;\forall w A(x^w,
\alpha ^w, y^w, \gamma (w)) ~\&~ \beta (y) = \gamma (x, y)}.
$$
And after $\exists \gamma$-introduction, $\exists
\gamma$-elimination discharging (c), from (e), (b) we get
\begin{multline*}
\mathrm{(f)\;\; \beta (0) = x ~\&~ \forall
z \,\beta (z')=\alpha (\langle \beta (z),z \rangle )}\\
~\&~ \mathrm{\exists \gamma \,[\,\forall w A(x^w, \alpha ^w, y^w,
\gamma (w)) ~\&~ \beta (y) = \gamma (x, y)\,]\,}.
\end{multline*}
After $\exists\beta$-introduction, $\exists\beta$-elimination
discharging (b), we get E$'$ in $\mathrm{\bf S_1}$ (using (a) and
the replacement theorem for equivalence).

\vskip 0.1cm

(vii) The remaining axioms are finitely many axioms not containing
the constant rec, and they are also axioms of $\mathrm{\bf S_1}$.
\end{proof}

\begin{lemma}
If $\:\mathrm{E}$ is an immediate consequence of $\:\mathrm{F}$
 ($\:\mathrm{F}$
and $\:\mathrm{G}$) in $\mathrm{\bf S_2}$, then $\mathrm{E'}$ is
an immediate consequence of $\:\mathrm{F'}$
 ($\:\mathrm{F'}$ and $\:\mathrm{G'}$) in $\mathrm{\bf S_1}$.
\end{lemma}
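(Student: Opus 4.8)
The plan is to proceed by cases on the inference rules of the two-sorted intuitionistic predicate calculus that $\mathrm{\bf S_1}$ and $\mathrm{\bf S_2}$ share, since every immediate consequence is drawn by one of them. Following \cite{IM}, these rules are modus ponens (from $\mathrm{F}$ and $\mathrm{F \rightarrow E}$ infer $\mathrm{E}$) together with the two quantifier rules, $\forall$-introduction in the consequent (from $\mathrm{C \rightarrow A(x)}$ infer $\mathrm{C \rightarrow \forall x\,A(x)}$, provided $\mathrm{x}$ is not free in $\mathrm{C}$) and $\exists$-introduction in the antecedent (from $\mathrm{A(x) \rightarrow C}$ infer $\mathrm{\exists x\,A(x) \rightarrow C}$, provided $\mathrm{x}$ is not free in $\mathrm{C}$), each available in a version for number variables and one for function variables. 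The two facts I would lean on throughout are both recorded in Lemma 4.6: elimination relation IV, that the transform $\,'\,$ commutes with all the logical operators including both kinds of quantifier, and the fact that $\,'\,$ introduces and removes no free variables.

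For modus ponens, with $\mathrm{E}$ an immediate consequence of $\mathrm{F}$ and $\mathrm{G \equiv F \rightarrow E}$, commutation of $\,'\,$ with $\rightarrow$ gives $\mathrm{G' \equiv F' \rightarrow E'}$, so $\mathrm{E'}$ is an immediate consequence of $\mathrm{F'}$ and $\mathrm{G'}$ by modus ponens in $\mathrm{\bf S_1}$. For $\forall$-introduction, with $\mathrm{E \equiv C \rightarrow \forall x\,A(x)}$ drawn from $\mathrm{F \equiv C \rightarrow A(x)}$ and $\mathrm{x}$ not free in $\mathrm{C}$, commutation of $\,'\,$ with $\rightarrow$ and $\forall$ gives $\mathrm{F' \equiv C' \rightarrow [\,A(x)\,]'}$ and $\mathrm{E' \equiv C' \rightarrow \forall x\,[\,A(x)\,]'}$; since $\,'\,$ preserves free variables and $\mathrm{x}$ is not free in $\mathrm{C}$, it is not free in $\mathrm{C'}$, so the eigenvariable condition holds and $\mathrm{E'}$ follows from $\mathrm{F'}$ by the same rule in $\mathrm{\bf S_1}$. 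The $\exists$-introduction case is dual, and the function-variable versions are verbatim the same with a function variable $\alpha$ in place of $\mathrm{x}$, using that $\,'\,$ preserves free function variables as well.

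The only point that will need care is matching the transform of the subformula $\mathrm{A(x)}$ as it sits inside $\mathrm{F}$ against its occurrence inside $\mathrm{E}$: the transform is pinned down only up to the choice of the auxiliary bound variables $\gamma$ and $\mathrm{w}$ introduced when eliminating rec-plain terms, so $\mathrm{[\,A(x)\,]'}$ is determined only up to congruence. Since congruent formulas are interderivable and the inference rules respect congruence, this is harmless; I would either fix a canonical choice of these auxiliary variables for each subformula or absorb the discrepancy with the replacement theorem for equivalence, exactly as in the proof of Lemma 4.10. I do not expect a genuine obstacle: with elimination relation IV and the preservation of free variables already established in Lemma 4.6, the statement is a routine rule-by-rule check, and it is precisely this inductive step on the inference rules which, combined with Lemma 4.10 (the base case on axioms), yields elimination relation III and hence that $\mathrm{\bf S_2}$ is a definitional extension of $\mathrm{\bf S_1}$.
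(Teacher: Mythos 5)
Your argument is correct and follows essentially the same route as the paper's proof, which likewise rests on the three ingredients you identify: commutation of $\,'\,$ with the logical operators (elimination relation IV), the fact that no free variables are introduced or removed (so the eigenvariable conditions on the quantifier rules are preserved), and the interderivability of congruent formulas to absorb the ambiguity in the auxiliary bound variables. You have merely written out the rule-by-rule check that the paper leaves implicit.
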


\begin{proof}
Since by the definition of the translation $\,'\,$ the logical
operators are preserved and since, by Lemma 4.6, no  free
variables are introduced or removed, and also since congruent
formulas are equivalent, it follows that to each instance of a
rule of $\mathrm{\bf S_2}$ corresponds  an instance of the same
rule in $\mathrm{\bf S_1}$.
\end{proof}

We conclude that if $\,\mathrm{\Gamma \vdash _2 E}$, where
$\Gamma$ is a list of formulas and E is a formula of $\mathrm{\bf
S_2}$ and no variables are varied in the deduction, then
$\mathrm{\Gamma '\vdash _1 E'}$. So elimination relation III is
satisfied.

\section{Comparison of {\bf M} and {\bf EL}}

\subsection{ Introduction of the other function(al) constants}

\subsubsection{}

Having the recursor constant rec in the formalism, it is immediate
that any constant for a function with a primitive recursive
description in which are used only functions with names already in
the symbolism can be added definitionally. The needed translation
is trivial, it amounts just to the replacement of each occurrence
of the new constant by the corresponding (longer) term provided by
the formalism. More concretely, constants for all the primitive
recursive functions can be added definitionally, successively
according to their primitive recursive descriptions, as follows:

\noindent\boldmath $\cdot$ \unboldmath For the initial functions
and for functions defined by composition from functions for which
we already have constants, it is very easy to find terms
expressing them.

\noindent\boldmath $\cdot$ \unboldmath For the case of definition
by primitive recursion we use the constant rec: for example if
$f(x,0)=g(x)$ and $f(x, y+1)=h(f(y),y,x)$, we introduce a new
constant f$_j$ by $\mathrm{f_{\textit{j}}(x,y)=rec(g(x), \lambda
z. h((z)_0,(z)_1,x),y)}$, if we have already in our symbolism
constants g and h for $g$ and $h$, respectively.

We note that in this way, not only functions, but also functionals
can be added in a formalism having a recursor, like  {\bf EL} or
{\bf M} + Rec. We also note that the equality axioms for the new
constants become provable.

\subsection{Comparison of {\bf M} and {\bf EL}}

\subsubsection{}

Let $\mathrm{{\bf M^+}}$ be obtained by adding to {\bf M} + Rec
all the (infinitely many) function constants of {\bf HA}, with
their defining axioms, extending also all axiom schemata to the
new language.

Let $\mathrm{{\bf EL^+}}$ be obtained by adding to {\bf EL} all
the (finitely many) functional constants of {\bf M}, with their
defining axioms, extending also all axiom schemata to the new
language.

We see that the languages of the extended systems $\mathrm{{\bf
M^+}}$ and $\mathrm{{\bf EL^+}}$ coincide (with trivial
differences). Using the relations between the function existence
principles that we have obtained as well as the equivalence of the
different definitions of the recursor constant, we arrive at the
following: \vskip 0.1cm

 {\bf EL$^+$}+ $\mathrm{CF\!_d}$ = {\bf HA$_1$} +
fin.list({\bf M}) + QF-AC$_{00}$ + CF$\!_\mathrm{d}$ =

\hskip 1.5cm {\bf IA$_1$} + Rec + inf.list({\bf HA}) + AC$_{00}$!
= {\bf M$^+$}.

\begin{theorem}
$\mathrm{{\bf EL^+}\!\!+ CF\!_d}$ is a conservative (in fact
definitional) extension of $\,\mathrm{{\bf M}}$.
\end{theorem}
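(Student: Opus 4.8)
The plan is to deduce the statement from two facts: that {\bf M$^+$} is a definitional extension of {\bf M}, and that {\bf EL$^+$}$+\mathrm{CF\!_d}$ is, up to the trivial notational differences recorded in the displayed chain of equalities, the very same theory as {\bf M$^+$}. Since a definitional extension is conservative, the conclusion then follows at once; the actual work lies in justifying the chain and in assembling the definitional-extension results of Section~4 and~5.1.

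First I would argue that {\bf M$^+$} is a definitional extension of {\bf M}. Section~4 already shows that {\bf M} + Rec is a definitional extension of {\bf M}: elimination relations I, II and IV are established in Lemma~4.6, while relation III is obtained from Lemmas~4.7--4.11 and the closing observation that III is satisfied. On top of {\bf M} + Rec one then adjoins, successively and in the order of their primitive recursive descriptions, the infinitely many function constants of {\bf HA}; by~5.1 each such addition is definitional, the presence of $\mathrm{rec}$ supplying a defining rec-term for every primitive recursive function (initial functions and compositions being immediate, and the equality axioms for the new constants becoming provable). As a composition of definitional extensions is again definitional, {\bf M$^+$}$\,=\,${\bf M} + Rec + inf.list({\bf HA}) is a definitional, hence conservative, extension of {\bf M}.

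Next I would verify the one nontrivial link of the displayed chain, namely
$$
\mathbf{HA_1}+\mathrm{fin.list}(\mathbf{M})+\mathrm{QF\text{-}AC_{00}}+\mathrm{CF\!_d}=\mathbf{IA_1}+\mathrm{Rec}+\mathrm{inf.list}(\mathbf{HA})+\mathrm{AC_{00}!},
$$
the two outer equalities being mere unfoldings of the definitions of {\bf EL$^+$} and {\bf M$^+$}. On the language side both theories carry the finite functional list of {\bf M}, the infinite primitive recursive list of {\bf HA}, a recursor and Church's $\lambda$, so the two alphabets and arithmetical bases agree once identical primitive recursive descriptions are identified (the passage between primitive and defined primitive recursive function symbols being inessential by \cite{IM}, $\S$74), and the schemata IND, REPL and $\lambda$-conversion then range over literally the same language. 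The two treatments of equality coincide by the reduction of~4.2.2 (both come down to (C)\,1 and~(C)\,3), and the defining axioms $\mathrm{Rec}$ and $\mathrm{REC}$ of the recursor are interderivable over this common base by~4.3.2. Finally the function existence principles match: by Corollary~3.3, over this base $\mathrm{QF\text{-}AC_{00}}+\mathrm{CF\!_d}$ and $\mathrm{AC_{00}!}$ prove one another. Hence the two theories have exactly the same theorems, i.e. {\bf EL$^+$}$+\mathrm{CF\!_d}$ coincides with {\bf M$^+$} up to trivial notational differences.

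Combining the two steps gives the statement: {\bf EL$^+$}$+\mathrm{CF\!_d}$ is the same theory as {\bf M$^+$}, which is a definitional extension of {\bf M}, so {\bf EL$^+$}$+\mathrm{CF\!_d}$ is a definitional---and therefore conservative---extension of {\bf M}, the required translation being the composite of the trivial renaming onto {\bf M$^+$} with the rec-less transform of Section~4. I expect the main obstacle to be the bookkeeping of the third step rather than any fresh proof-theoretic difficulty: one must check carefully that after the respective extensions the two arithmetical bases are genuinely identical---matching the named primitive recursive descriptions, the reduced equality axioms, and the two recursor definitions---so that all schemata coincide and the interderivabilities of Corollary~3.3 and~4.3.2 may be applied verbatim. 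Once this identification is secured, nothing beyond the definitional-extension machinery already proved is needed.
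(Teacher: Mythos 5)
Your proposal is correct and follows essentially the same route as the paper: the displayed chain identifying $\mathbf{EL^+}+\mathrm{CF\!_d}$ with $\mathbf{M^+}$ (via Corollary~3.3 and the interderivability of Rec and REC in~4.3.2) together with the definitional-extension machinery of Sections~4 and~5.1. The paper's official proof is just the one-sentence observation that every proof in $\mathbf{EL^+}+\mathrm{CF\!_d}$ involves only finitely many of the new constants and so takes place in a finite subsystem, which is a definitional extension of $\mathbf{M}$; your write-up unfolds exactly the ingredients that observation relies on. The one place where your wording needs a supplement is the appeal to ``a composition of definitional extensions is again definitional'': that covers finite compositions, whereas $\mathbf{M^+}$ adjoins infinitely many constants, so you are really taking the union of an increasing chain of definitional extensions. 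The passage to that union is legitimate, but its justification is precisely the paper's finite-subsystem remark --- each formula of $\mathbf{M^+}$ contains only finitely many of the $\mathbf{HA}$ constants, so the translation $'$ is defined on it by composing finitely many of the stagewise translations, and each derivation lives at a finite stage of the chain. Making that remark explicit turns your argument into the paper's.
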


\begin{proof}
It suffices to observe that every proof in $\mathrm{{\bf EL^+
}\!\!+ CF\!_d}$ is done in a finite subsystem of it, so in a
definitional extension of $\mathrm{{\bf M}}$.
\end{proof}

\begin{theorem}
$\mathrm{{\bf M^+}}$ is a conservative (in fact definitional)
extension of $\,\mathrm{{\bf EL }+ CF\!_d}$.
\end{theorem}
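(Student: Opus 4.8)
The plan is to prove Theorem~5.2 in the same spirit as Theorem~5.1, by invoking the identity $\mathrm{{\bf M^+}} = \mathrm{{\bf EL^+}} + \mathrm{CF\!_d}$ recorded in the display just before Theorem~5.1. Given that identity, it suffices to show that $\mathrm{{\bf EL^+}} + \mathrm{CF\!_d}$ is a definitional extension of $\mathrm{{\bf EL}} + \mathrm{CF\!_d}$, since a definitional extension is conservative (4.1). The welcome simplification, compared with Theorem~5.1, is that $\mathrm{{\bf EL^+}}$ is got from {\bf EL} by adjoining only the \emph{finitely many} functional constants of {\bf M}: here the extension is already finite, so the ``every proof lives in a finite subsystem'' device used for Theorem~5.1 is not needed.

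First I would check that $\mathrm{{\bf EL^+}}$ is a definitional extension of {\bf EL}. Because {\bf EL} already contains the recursor rec, each functional constant $\mathrm{f}_i$ of {\bf M} can be introduced, one at a time, by the method of~5.1: its primitive recursive description (explicit, or by primitive recursion, according to the two forms of the defining axioms) mentions only constants already available, so $\mathrm{f}_i$ is defined by the corresponding rec-term, and the translation replaces each occurrence of $\mathrm{f}_i$ by that term. The defining equations of $\mathrm{f}_i$ then follow from REC and $\lambda$-conversion, and the equality axioms for $\mathrm{f}_i$ are provable as remarked in~5.1. The constants carrying a function argument ($\mathrm{f}_{15}$--$\mathrm{f}_{18}$, $\mathrm{f}_{23}$, $\mathrm{f}_{24}$) and the constant $\mathrm{f}_{26}$, whose defining term $\mathrm{r(y,z)}$ is the primitive recursive term constructed in \cite{Kleene1969}, are treated no differently, since a recursor accommodates functionals as well as functions. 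Composing these finitely many definitional steps gives a single definitional extension.

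The remaining, and only delicate, step is to pass from this to the systems carrying $\mathrm{CF\!_d}$, which is where I expect the only real care to be required. Adjoining the same schema $\mathrm{CF\!_d}$ to both {\bf EL} and $\mathrm{{\bf EL^+}}$ leaves elimination relations I, II and IV untouched, so only III must be re-examined: one needs each instance of $\mathrm{CF\!_d}$ over the richer language to translate to a theorem of $\mathrm{{\bf EL}} + \mathrm{CF\!_d}$. Since the translation commutes with the logical operators (relation~IV) and fixes the subformulas $\mathrm{\beta(x) \le 1}$ and $\mathrm{\beta(x) = 0}$, which contain no new constant, the translate of the $\mathrm{CF\!_d}$ instance for a formula $\mathrm{B(x)}$ is congruent to the $\mathrm{CF\!_d}$ instance for its transform $\mathrm{[\,B(x)\,]'}$, hence provable in $\mathrm{{\bf EL}} + \mathrm{CF\!_d}$. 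This yields that $\mathrm{{\bf EL^+}} + \mathrm{CF\!_d}$ is a definitional extension of $\mathrm{{\bf EL}} + \mathrm{CF\!_d}$; by the displayed identity --- justified by Corollary~3.3 ($\mathrm{AC_{00}!} \Leftrightarrow \mathrm{QF\text{-}AC_{00}} + \mathrm{CF\!_d}$) and the interderivability of Rec and REC (4.4.2) --- the same holds of $\mathrm{{\bf M^+}}$, which is the theorem.
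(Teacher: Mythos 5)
Your proposal is correct and follows exactly the route the paper intends: the paper states this theorem without proof, relying on the displayed identity $\mathrm{{\bf M^+}} = \mathrm{{\bf EL^+}}+\mathrm{CF\!_d}$ and the observation of 5.1 that the finitely many function(al) constants of {\bf M} can be added definitionally to {\bf EL} via its recursor. Your additional check that the translate of a $\mathrm{CF\!_d}$ instance is (congruent to) a $\mathrm{CF\!_d}$ instance in the smaller language is exactly the argument the paper uses for $\mathrm{AC_{00}!}$ in Lemma 4.10(v), so the write-up is a faithful filling-in of the omitted proof.
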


\begin{corollary}
The systems $\,\mathrm{{\bf M^+}}$ and  $\,\mathrm{{\bf EL^+
}\!\!+ CF\!_d}$ essentially coincide, so $\,\mathrm{{\bf M}}$ and
$\,\mathrm{{\bf EL }+ CF\!_d}$ are essentially equivalent, in the
sense that they have a common conservative extension obtained by
definitional extensions.
\end{corollary}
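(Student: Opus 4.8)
The plan is to reduce the statement to the two preceding theorems by first making precise that $\mathrm{{\bf M^+}}$ and $\mathrm{{\bf EL^+}\!\!+ CF\!_d}$ are one and the same system, up to trivial notational differences --- this is exactly the content of the displayed chain of equalities preceding Theorem 5.1. Once this coincidence is established, a single formal system will serve as a common definitional extension of both {\bf M} and {\bf EL} + $\mathrm{CF\!_d}$, and the asserted essential equivalence follows immediately.

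First I would verify that the two systems share the same language. Both are formulated in the common extended alphabet: all the primitive recursive function constants of $\mathcal{L}({\bf HA})$, the finite list $\mathrm{f_0, \ldots , f_{26}}$ of function(al) constants of {\bf IA$_1$}, the recursor constant $\mathrm{rec}$, Church's $\lambda$, and the application parentheses. The only discrepancies are notational: some primitive recursive functions (successor, addition, and so on) are named both by a constant $\mathrm{h_\textit{j}}$ inherited from {\bf HA} and by a constant $\mathrm{f_\textit{i}}$ from the finite list, and such duplicates are identified through their provably equivalent defining equations.

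Next I would check that the two systems prove exactly the same formulas. Both contain the defining equations for all primitive recursive function constants and for $\mathrm{f_0, \ldots , f_{26}}$, together with $\lambda$-conversion, the induction schema IND, and the equality axioms, all extended to the full language. They differ only in the recursor axiom and in the function existence principle. For the recursor, $\mathrm{{\bf M^+}}$ carries Rec while $\mathrm{{\bf EL^+}}$ carries REC, and these are interderivable over $\mathrm{\bf IA_1}$ (and $\mathrm{\bf HA_1}$) by the interderivability result of 4.3.2. For the function existence principle, $\mathrm{{\bf M^+}}$ assumes $\mathrm{AC_{00}!}$ whereas $\mathrm{{\bf EL^+}\!\!+ CF\!_d}$ assumes $\mathrm{QF\text{-}AC_{00}} + \mathrm{CF\!_d}$, and by Corollary 3.3 these are equivalent over both base theories. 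Hence the two systems have the same theorems, which is precisely the displayed chain of equalities.

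Finally I would assemble the conclusion. Writing {\bf C} for this common system, Theorem 5.1 shows that {\bf C} is a definitional (hence conservative) extension of {\bf M}, and Theorem 5.2 shows that {\bf C} is a definitional (hence conservative) extension of {\bf EL} + $\mathrm{CF\!_d}$. Thus {\bf M} and {\bf EL} + $\mathrm{CF\!_d}$ admit a common extension obtained by definitional extensions, which --- since a definitional extension is inessential --- is exactly the sense in which they are essentially equivalent. The main obstacle is not a deep logical one but careful bookkeeping in the first step: one must confirm that the two orders of building up the common language yield literally the same symbols and axioms, modulo the identification of duplicate primitive recursive constants and the interderivability of the two recursor formulations. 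After that verification, the corollary is immediate from the two theorems.
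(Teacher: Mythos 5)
Your proposal is correct and follows essentially the same route as the paper: the corollary is obtained by combining the displayed chain of identities showing that $\mathrm{{\bf M^+}}$ and $\mathrm{{\bf EL^+}}+\mathrm{CF\!_d}$ coincide up to trivial notational differences (via Corollary 3.3 for the function existence principles and the interderivability of Rec and REC from 4.3.2) with Theorems 5.1 and 5.2, which exhibit this common system as a definitional extension of each of $\mathrm{{\bf M}}$ and $\mathrm{{\bf EL}}+\mathrm{CF\!_d}$. Your attention to the bookkeeping of duplicate constants in the merged language is exactly the "trivial notational differences" the paper alludes to.
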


\vskip 0.1cm

In \cite{Troelstra1974} p. 585, a result of N. Goodman is
mentioned and used, stating that $\mathrm{{\bf EL_1}}$ is
conservative over {\bf HA}, where $\mathrm{{\bf EL_1}}$ is {\bf
EL} + AC$_{01}$, where AC$_{01}$ (which entails AC$_{00}$!) is the
countable choice schema assumed by {\bf FIM}. With our previous
results, we obtain the following.

\begin{theorem}
 $\mathrm{{\bf EL} + CF\!_d}$ is a conservative extension of
 $\,\mathrm{{\bf HA}}$.
\end{theorem}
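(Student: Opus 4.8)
The plan is to obtain the result by sandwiching $\mathrm{{\bf EL} + CF\!_d}$ between $\mathrm{{\bf HA}}$ and Goodman's theory $\mathrm{{\bf EL_1}} = \mathrm{{\bf EL}} + \mathrm{AC_{01}}$, and then invoking the cited conservativity of $\mathrm{{\bf EL_1}}$ over $\mathrm{{\bf HA}}$. The entire mathematical content will be carried by Goodman's theorem; the two inclusions that frame it are routine consequences of the results already established in Sections 3 and 1.

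First I would record the lower inclusion. Since the language $\mathcal{L}(\mathrm{{\bf EL}})$ is $\mathcal{L}(\mathrm{{\bf HA_1}})$, which extends $\mathcal{L}(\mathrm{{\bf HA}})$, and since $\mathrm{{\bf HA_1}}$ extends $\mathrm{{\bf HA}}$, we have $\mathrm{{\bf HA}} \subseteq \mathrm{{\bf EL}} \subseteq \mathrm{{\bf EL} + CF\!_d}$; in particular every theorem of $\mathrm{{\bf HA}}$ is a theorem of $\mathrm{{\bf EL} + CF\!_d}$. Next I would establish the upper inclusion, namely that $\mathrm{{\bf EL} + CF\!_d}$ is a subtheory of $\mathrm{{\bf EL_1}}$. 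The cleanest route is through Corollary 3.4: over $\mathrm{{\bf HA_1}}$ the schema $\mathrm{AC_{00}!}$ is equivalent to $\mathrm{QF\text{-}AC_{00}} + \mathrm{CF\!_d}$, so $\mathrm{{\bf EL} + CF\!_d} = \mathrm{{\bf HA_1}} + \mathrm{QF\text{-}AC_{00}} + \mathrm{CF\!_d} = \mathrm{{\bf HA_1}} + \mathrm{AC_{00}!}$. Since $\mathrm{AC_{01}}$ entails $\mathrm{AC_{00}!}$, it follows that $\mathrm{{\bf EL} + CF\!_d} = \mathrm{{\bf EL}} + \mathrm{AC_{00}!} \subseteq \mathrm{{\bf EL}} + \mathrm{AC_{01}} = \mathrm{{\bf EL_1}}$, so that $\mathrm{{\bf EL} + CF\!_d} \vdash E$ implies $\mathrm{{\bf EL_1}} \vdash E$ for every formula $E$. (Alternatively, one may cite Proposition 3.2 directly to see that $\mathrm{CF\!_d}$ is provable in $\mathrm{{\bf EL_1}}$.)

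I would then combine the two inclusions with Goodman's result. Let $\mathrm{A}$ be any formula of $\mathcal{L}(\mathrm{{\bf HA}})$ provable in $\mathrm{{\bf EL} + CF\!_d}$. By the upper inclusion $\mathrm{A}$ is provable in $\mathrm{{\bf EL_1}}$; since $\mathrm{A}$ lies in $\mathcal{L}(\mathrm{{\bf HA}})$ and $\mathrm{{\bf EL_1}}$ is conservative over $\mathrm{{\bf HA}}$, we conclude $\mathrm{{\bf HA}} \vdash \mathrm{A}$. The converse direction is exactly the lower inclusion. Hence the theorems of $\mathrm{{\bf EL} + CF\!_d}$ that belong to $\mathcal{L}(\mathrm{{\bf HA}})$ are precisely the theorems of $\mathrm{{\bf HA}}$, which is the assertion of the theorem.

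The only point requiring a word of care — and the nearest thing to an obstacle — is the bookkeeping of languages: all three theories are built on the same second-sort apparatus ($\lambda$-abstraction, function application and the recursor) sitting above $\mathcal{L}(\mathrm{{\bf HA}})$, so that ``conservative over $\mathrm{{\bf HA}}$'' must be read throughout with respect to one and the same sublanguage $\mathcal{L}(\mathrm{{\bf HA}})$. This is immediate from the descriptions of the languages in Section 1 and poses no genuine difficulty, so that, modulo the deep cited theorem of Goodman, the proof reduces to the elementary sandwiching just described.
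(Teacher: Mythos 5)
Your proposal is correct and follows exactly the route the paper intends: the paper states Theorem 5.4 without a separate proof, deriving it from the immediately preceding remark that Goodman's theorem gives conservativity of $\mathrm{{\bf EL_1}} = \mathrm{{\bf EL}} + \mathrm{AC_{01}}$ over $\mathrm{{\bf HA}}$, combined with Corollary 3.4 (so that $\mathrm{{\bf EL} + CF\!_d} = \mathrm{{\bf EL}} + \mathrm{AC_{00}!} \subseteq \mathrm{{\bf EL_1}}$). Your sandwiching argument is precisely this, with the inclusions spelled out more explicitly than the paper bothers to.
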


\begin{theorem}
 $\mathrm{{\bf M^+}}$ is a conservative extension of
 $\,\mathrm{{\bf HA}}$.
\end{theorem}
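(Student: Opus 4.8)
The plan is to derive the statement by transitivity of conservativity, chaining the two conservative extensions already established in this section. First I would record the relevant language inclusions. Since $\mathrm{\mathcal{L}({\bf EL})}$ is $\mathrm{\mathcal{L}({\bf HA_1})}$, which extends $\mathrm{\mathcal{L}({\bf HA})}$, and since $\mathrm{\mathcal{L}({\bf M^+})}$ contains the function constants of {\bf HA} together with the recursor constant and the functional constants of {\bf M}, we have $\mathrm{\mathcal{L}({\bf HA})} \subseteq \mathrm{\mathcal{L}({\bf EL})} \subseteq \mathrm{\mathcal{L}({\bf M^+})}$, the middle term being the language of the system $\mathrm{{\bf EL} + CF\!_d}$; moreover the sets of theorems increase along this chain, so each system is indeed an extension of the previous one.

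Next I would invoke the two inputs proved above. Theorem 5.2 gives that $\mathrm{{\bf M^+}}$ is a (definitional, hence) conservative extension of $\mathrm{{\bf EL} + CF\!_d}$, and Theorem 5.4 gives that $\mathrm{{\bf EL} + CF\!_d}$ is a conservative extension of $\mathrm{{\bf HA}}$. Note that the latter is only conservative (not definitional, as it rests on Goodman's theorem), so the composite will likewise be conservative, which is exactly the strength claimed here.

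Then the conclusion follows by the routine transitivity argument. Let $\mathrm{A}$ be any formula of $\mathrm{\mathcal{L}({\bf HA})}$ with $\mathrm{{\bf M^+}} \vdash \mathrm{A}$. Since $\mathrm{A}$ is then a formula of $\mathrm{\mathcal{L}({\bf EL})}$ and $\mathrm{{\bf M^+}}$ is conservative over $\mathrm{{\bf EL} + CF\!_d}$, we obtain $\mathrm{{\bf EL} + CF\!_d} \vdash \mathrm{A}$. As $\mathrm{A}$ is a formula of $\mathrm{\mathcal{L}({\bf HA})}$ and $\mathrm{{\bf EL} + CF\!_d}$ is conservative over $\mathrm{{\bf HA}}$, we conclude $\mathrm{{\bf HA}} \vdash \mathrm{A}$. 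The converse inclusion is immediate, since $\mathrm{{\bf M^+}}$ extends $\mathrm{{\bf HA}}$; hence the theorems of $\mathrm{{\bf M^+}}$ in the language $\mathrm{\mathcal{L}({\bf HA})}$ are precisely the theorems of $\mathrm{{\bf HA}}$.

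I do not expect a genuine obstacle, as transitivity of conservativity is elementary once the three systems are lined up. The only point requiring care is the bookkeeping of languages: one must check that every formula of $\mathrm{\mathcal{L}({\bf HA})}$ is \emph{literally} a formula of the intermediate system $\mathrm{{\bf EL} + CF\!_d}$, so that the conservativity of $\mathrm{{\bf M^+}}$ over $\mathrm{{\bf EL} + CF\!_d}$ applies to $\mathrm{A}$, and that $\mathrm{A}$ remains in $\mathrm{\mathcal{L}({\bf HA})}$ when passing to the second step. This is precisely what the inclusions recorded in the first step guarantee.
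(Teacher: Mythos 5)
Your proposal is correct and matches the paper's intended argument: the paper states this theorem without explicit proof, introducing it (together with Theorem 5.4) by ``With our previous results, we obtain the following,'' and the evident route is exactly your chaining of Theorem 5.2 ($\mathrm{{\bf M^+}}$ conservative over $\mathrm{{\bf EL}+CF\!_d}$) with Theorem 5.4 ($\mathrm{{\bf EL}+CF\!_d}$ conservative over $\mathrm{{\bf HA}}$, via Goodman's result) and transitivity of conservativity. Your care about the language inclusions is appropriate and consistent with the paper's setup.
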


\begin{theorem}
 $\mathrm{{\bf M}}$ is a conservative
extension of $\,\mathrm{\bf IA_0}$.
\end{theorem}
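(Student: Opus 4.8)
The plan is to deduce this from the machinery of Sections~4--6 by a short chain of conservativity results passing through the common extension $\mathrm{\bf M^+}$, together with the classical first-order fact about adjoining primitive recursive function symbols. I would assemble three ingredients. First, the previous theorem gives that $\mathrm{\bf M^+}$ is a conservative extension of $\mathrm{\bf HA}$. Second, by the well-known result of \cite{IM}, $\S74$ (recalled in 1.2.1), the passage from $\mathrm{\bf IA_0}$ to $\mathrm{\bf HA}$ --- the adjunction of function symbols for all primitive recursive functions, with $+$ and $\cdot$ among them --- is a definitional, hence conservative, extension, so that $\mathrm{\bf HA}$ is conservative over $\mathrm{\bf IA_0}$. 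Third, by construction $\mathrm{\bf M}$ is a subtheory of $\mathrm{\bf M^+}$ (the latter is obtained by adding the recursor and the infinitely many function constants of $\mathrm{\bf HA}$ to $\mathrm{\bf M}$), so every theorem of $\mathrm{\bf M}$ is a theorem of $\mathrm{\bf M^+}$.

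With these in hand the argument is immediate. Let $\mathrm{E}$ be any formula of $\mathcal{L}(\mathrm{\bf IA_0})$ and suppose $\mathrm{\bf M} \vdash \mathrm{E}$. Since $\mathrm{\bf M}$ is a subtheory of $\mathrm{\bf M^+}$ we obtain $\mathrm{\bf M^+} \vdash \mathrm{E}$. Now $\mathrm{E}$, being a formula of $\mathcal{L}(\mathrm{\bf IA_0})$, is (up to the trivial notational identification of the infix $+,\cdot$ with the corresponding primitive recursive function constants) a formula of $\mathcal{L}(\mathrm{\bf HA})$, so conservativity of $\mathrm{\bf M^+}$ over $\mathrm{\bf HA}$ yields $\mathrm{\bf HA} \vdash \mathrm{E}$. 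Finally conservativity of $\mathrm{\bf HA}$ over $\mathrm{\bf IA_0}$ gives $\mathrm{\bf IA_0} \vdash \mathrm{E}$. This is exactly the nontrivial half of conservativity of $\mathrm{\bf M}$ over $\mathrm{\bf IA_0}$; the reverse inclusion of theorems is clear, as $\mathrm{\bf M}$ extends $\mathrm{\bf IA_0}$ (through $\mathrm{\bf IA_1}$).

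The one point deserving care --- and the step I would spell out --- is the notational bridge between $\mathcal{L}(\mathrm{\bf IA_0})$ and $\mathcal{L}(\mathrm{\bf HA})$, since strictly $\mathrm{\bf IA_0}$ writes $\mathrm{(s)+(t)}$ whereas $\mathrm{\bf HA}$ uses a function constant $\mathrm{h}_i(\mathrm{s},\mathrm{t})$. This is precisely handled by the $\S74$ translation: under it each $\mathrm{\bf IA_0}$-formula is carried to an $\mathrm{\bf HA}$-formula provably equivalent to it (indeed already in $\mathrm{\bf M^+}$, where both notations are present and the defining equations force the two names to denote the same functions), while $\mathrm{\bf IA_0}$-formulas are fixed. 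Inserting $\mathrm{E}$ into this identification legitimizes the middle step above. I do not expect a genuine obstacle: the statement is essentially a bookkeeping corollary, the only substantive input being the conservativity of $\mathrm{\bf M^+}$ over $\mathrm{\bf HA}$ already established.
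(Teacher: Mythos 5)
Your proof is correct and is essentially the paper's own argument: the theorem is stated there without an explicit proof, as following ``with our previous results,'' which is precisely the chain you assemble ($\mathrm{\bf M}$ is contained in its definitional extension $\mathrm{\bf M^+}$, conservativity of $\mathrm{\bf M^+}$ over $\mathrm{\bf HA}$ from the preceding theorem, and conservativity of $\mathrm{\bf HA}$ over $\mathrm{\bf IA_0}$ via \cite{IM}, $\S$74). Your care about the notational identification of $\mathcal{L}(\mathrm{\bf IA_0})$ inside $\mathcal{L}(\mathrm{\bf HA})$ is appropriate and matches the level at which the paper itself treats such ``trivial notational differences.''
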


\section{Elimination of the symbol $\lambda$ from {\bf EL}
and $\mathrm{{\bf IA_1} + QF\text{-}AC_{00}}$}

\subsection{}

From \cite{JRMPhD}  it is known that $\lambda$ can be eliminated
from the formal systems that S. C. Kleene set up to formalize
parts of intuitionistic analysis, including {\bf M}. The proof
uses $\mathrm{AC_{00}!}$ so, by the previous results, it is not
valid for the cases of {\bf EL} and $\mathrm{{\bf IA_1} +
QF\text{-}AC_{00}}$.\footnote{Motivated by Iris Loeb's question
whether this method of elimination (that we initially applied) for
{\bf EL} would have worked for {\bf M}, we considered also the
case of {\bf IA$_1$} + QF-AC$_{00}$.} Here we modify a part of it,
and obtain the corresponding results.

Let $\mathrm{\bf EL - \lambda}$ be obtained from {\bf EL} by
omitting the symbol $\lambda$, the corresponding functor formation
rule and the axiom schema of $\lambda$-conversion. As axioms for
the constant rec we include in both systems the following version,
equivalent by logic to the one used in the definition of {\bf EL},
with terms and a functor in the places of the number variables and
the function variable, respectively:
$$
\mathrm{REC}\;\;\left\{
\begin{array}{ll}
    \mathrm{rec (x, \alpha , 0) = x,} \\
    \mathrm{rec (x, \alpha ,S(y))=
    \alpha (\langle rec(x, \alpha , y), y \rangle ).}
\end{array}\right.
$$
Both systems include $\mathrm{QF\text{-}AC_{00}}$. Instead of it
we choose to include, in both systems, the following term-version
of it:
$$
\mathrm{QF_t\text{-}AC_{00}\;\;\;
\forall x \exists y\,
t(\langle x, y \rangle ) = 0
 \rightarrow \exists \alpha \forall x \,
t(\langle x, \alpha (x) \rangle ) = 0},
$$
where x is free for y in $\mathrm{t(\langle x, y \rangle )}$ and
$\alpha$ does not occur in $\mathrm{t(\langle x, y \rangle )}$.
Similarly, we consider $\mathrm{{\bf IA_1} + QF\text{-}AC_{00}}$
(its version with $\mathrm{QF_t\text{-}AC_{00}}$) and the
corresponding $\mathrm{{\bf IA_1} + QF\text{-}AC_{00} - \lambda}$.

In the following $\mathrm{\bf S_2}$ is {\bf EL} or $\mathrm{{\bf
IA_1} + QF\text{-}AC_{00}}$ and  $\mathrm{\bf S_1}$ is
$\mathrm{\bf S_2 - \lambda}$. By $\vdash _1$, $\vdash _2$ we
denote provability in $\mathrm{\bf S_1}$, $\mathrm{\bf S_2}$,
respectively.

\vskip 0.1cm

\textsc{Proposition}. \textit{Over $\mathrm{\bf HA_1}$,
$\mathrm{\bf IA_1}$ and the corresponding systems without
$\lambda$, $\mathrm{QF\text{-}AC_{00}}$ is interderivable with
$\mathrm{QF _t\text{-}AC_{00}}$.}

\begin{proof}
Observe that $\mathrm{QF _t\text{-}AC_{00}}$ is a special case of
$\mathrm{QF\text{-}AC_{00}}$. We obtain
$\mathrm{QF\text{-}AC_{00}}$ from $\mathrm{QF _t\text{-}AC_{00}}$
as follows. In all the systems mentioned, for any quantifier-free
formula $\mathrm{A(x, y)}$, we can find a term $\mathrm{q(x, y)}$
with the same free variables as $\mathrm{A(x, y)}$, such that
$\mathrm{\vdash A(x,y) \leftrightarrow q(x,y) = 0}$. We get easily
the result if we consider the term $\mathrm{t(w) \equiv q((w)_0,
(w)_1)}$, for which we obtain $\mathrm{\vdash A(x, y)
\leftrightarrow t(\langle x, y \rangle ) = 0}$.
\end{proof}

\subsection{}

To obtain $\lambda$-eliminability in \cite{JRMPhD} a translation
$\,'\,$ is defined and then it is shown that the elimination
relations are satisfied. We give the definition for prime
formulas; for composite, the translation $\,'\,$ is defined so
that the logical operations are preserved.

\textsc{Remark} \emph{on terminology and notation.} We use
terminology and notation similar to those used in the case of rec.
In particular, if we consider a specified occurrence of a term or
functor R in an expression E, we write $\mathrm{E\,[\,R\,]\,}$ to
indicate this occurrence, and when this occurrence is replaced by
a function variable, say $\alpha$, we write $\mathrm{E\,[\,\alpha
\,]\,}$ to denote the result of this replacement.

\textsc{Definition.} If P is any prime formula of $\mathrm{\bf
S_2}$ with no $\lambda$'s, then P$'$ is P. Otherwise, if
$\mathrm{\lambda x.s(x)}$ is the first (free) $\lambda$-occurrence
in P, in which case we use the notation $\mathrm{P\,[\,\lambda
x.s(x)\,]}$, then
$$
\mathrm{P' \equiv \exists \alpha\,[\, \forall x\, [\,s(x) =
\alpha (x)\,]' ~\&~ [\,P\,[\,\alpha \,]\,]'\,]},
$$
where $\alpha$ is a function variable which does not occur in P,
and $\mathrm{P\,[\,\alpha \,]}$ is obtained from P by replacing
the occurrence $\mathrm{\lambda x.s(x)}$ by an occurrence of
$\alpha$.

The only point of the proof in \cite{JRMPhD} where AC$_{00}$!
 is used (except the treatment of AC$_{00}$! itself) is in order to
obtain $ \;\mathrm{\vdash _1 \exists ! \alpha\, \forall x\,
[\,t(x) = \alpha (x)\,]' }$. Here we present some lemmas by which
we obtain this result using only   $\mathrm{QF\text{-}AC_{00}}$,
so, after treating the axioms not included in the systems of
\cite{JRMPhD}, we obtain $\lambda$-eliminability for {\bf EL} and
$\mathrm{{\bf IA_1} + QF\text{-}AC_{00}}$.

\vskip 0.1cm

\textsc{Remark}. Free substitution of a free (number) variable by
a $\lambda$-less term commutes with the translation $\,'\,$ (it
follows by induction on the number of logical operators of the
considered formula, where the case of prime formulas follows by
induction on the number of $\lambda$-occurrences). From this we
also obtain:

(a) Consider $\mathrm{E \equiv \forall x A(x) \rightarrow A(t)}$,
where A(x) is any formula of $\mathrm{\bf S_2}$ (with t free for x
in A(x)), an axiom of $\mathrm{\bf S_2}$ by schema 10N. If t has
no $\lambda$'s, then $ \mathrm{\vdash_1E'}$.

(b) If s, t are $\lambda$-less terms free for z in a formula B(z)
of $\mathrm{\bf S_2}$, then
$$
 \mathrm{s=t \vdash_1[B(s)]'
\leftrightarrow [B(t)]'}.
$$
Corresponding results are obtained for function variables and
$\lambda$-less functors (and in particular for the $\forall
\alpha$-elimination schema 10F).

\begin{lemma}
Let $\mathrm{r(x)}$ be any term of $\,\mathrm{\bf S_2}$ with
$\alpha$ not free in it. Then
$$
\mathrm{\vdash _1 \exists \alpha\, \forall x\, [\,r(x) =
\alpha (x)\,]' }.
$$
\end{lemma}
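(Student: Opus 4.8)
The plan is to argue inside $\mathrm{\bf S_1}$, by induction on the number $n$ of occurrences of $\lambda$ in $\mathrm{r(x)}$, using only $\mathrm{QF_t\text{-}AC_{00}}$ (equivalently $\mathrm{QF\text{-}AC_{00}}$, by the Proposition in 6.1) exactly where \cite{JRMPhD} used $\mathrm{AC_{00}!}$. For $n=0$ the term is $\lambda$-less, so $\mathrm{[\,r(x)=\alpha(x)\,]'}$ is literally $\mathrm{r(x)=\alpha(x)}$; putting $\mathrm{t(w)\equiv sg\,|\,r((w)_0)-(w)_1\,|}$ one has $\mathrm{\vdash_1 t(\langle x,y\rangle)=0\leftrightarrow r(x)=y}$ and, taking $\mathrm{y\equiv r(x)}$, $\mathrm{\vdash_1\forall x\exists y\, t(\langle x,y\rangle)=0}$, so $\mathrm{QF_t\text{-}AC_{00}}$ yields $\mathrm{\vdash_1\exists\alpha\forall x\, r(x)=\alpha(x)}$ at once. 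This is the only place the choice principle is applied to the value $\alpha$ itself; every later appeal to it will manufacture \emph{auxiliary} functions.

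For the inductive step let $\mathrm{\lambda u.p}$ be the first (leftmost) $\lambda$-occurrence of $\mathrm{r(x)}$ and write $\mathrm{r(x)\equiv r[\lambda u.p]}$, so that by the definition of $\,'\,$ the formula $\mathrm{[\,r(x)=\alpha(x)\,]'}$ is $\mathrm{\exists\beta\,[\,\forall u\,[\,p=\beta(u)\,]'~\&~[\,r[\beta]=\alpha(x)\,]'\,]}$, where $\beta$ is fresh and both $\mathrm{p}$ and $\mathrm{r[\beta]}$ contain fewer than $n$ occurrences of $\lambda$. Applying the induction hypothesis to $\mathrm{p}$, with its two number arguments coded into one by pairing, I obtain $\gamma$ with $\mathrm{\vdash_1\forall x\forall u\,[\,p=\gamma(\langle x,u\rangle)\,]'}$; thus for each fixed $\mathrm{x}$ the section $\mathrm{u\mapsto\gamma(\langle x,u\rangle)}$ is exactly the function $\beta$ must be. By part~(b) of the Remark in 6.2 it therefore suffices to produce a \emph{single} $\alpha$ with $\mathrm{\vdash_1\forall x\,[\,\hat r(x)=\alpha(x)\,]'}$, where $\mathrm{\hat r(x)}$ is $\mathrm{r[\beta]}$ with every occurrence of $\beta$ replaced, uniformly in $\mathrm{x}$, by that section of $\gamma$. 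Since $\mathrm{\hat r(x)}$ again has fewer than $n$ occurrences of $\lambda$, the induction hypothesis will finish the job as soon as $\mathrm{\hat r(x)}$ is exhibited as a genuine $\mathrm{\bf S_1}$-term.

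The hard part is precisely this bundling: the translation imposes the order $\mathrm{\exists\alpha\forall x}$ while the witness $\beta$ (the section) varies with $\mathrm{x}$, and it is here that \cite{JRMPhD} invokes $\mathrm{AC_{00}!}$ through the $\mathrm{\exists!\alpha}$-form of Remark~1 of 4.3.3. Lacking it, I would build $\mathrm{\hat r(x)}$ by recursion on the structure of $\mathrm{r[\beta]}$: an application $\mathrm{\beta(e)}$ is harmless, becoming $\mathrm{\gamma(\langle x,e\rangle)}$, and the only genuine obstacle is an occurrence of the section in functor position inside one of the functionals $\mathrm{f_{15}}$--$\mathrm{f_{18}},\mathrm{f_{23}},\mathrm{f_{24}}$ or inside $\mathrm{rec}$, which cannot be named as a functor once $\lambda$ is gone. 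This is resolved by carrying the parameter $\mathrm{x}$ inside the recursion: since each such functional is a primitive recursion reading its functor argument at only finitely many places, $\mathrm{rec(a,\sigma_x,m)}$ with $\mathrm{\sigma_x}$ the section of $\gamma$ at $\mathrm{x}$ is recovered as a projection of $\mathrm{rec(\langle a,x\rangle,\tau,m)}$, where the step functor $\tau$ reads $\mathrm{x}$ back out of the state, is therefore \emph{independent of} $\mathrm{x}$, and is obtained once and for all by a single $\mathrm{QF_t\text{-}AC_{00}}$; crucially $\mathrm{rec}$ already belongs to $\mathrm{\bf S_1}$. Iterating these replacements turns $\mathrm{\hat r(x)}$ into a $\lambda$-less term in $\mathrm{x},\gamma$ and the finitely many auxiliary function variables just produced, whence the induction hypothesis (or, at the bottom, one more $\mathrm{QF_t\text{-}AC_{00}}$ as in the base case) delivers the required $\alpha$. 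The routine check throughout is that every replacement is made only at positions not inside the scope of a function quantifier binding a variable free in the section, so that the replacement theorem of 4.2.3 applies --- the same binding side-condition that recurs in Section~4.
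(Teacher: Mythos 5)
Your argument is sound in outline but takes a genuinely different, and much heavier, route than the paper. The paper's inductive step applies the induction hypothesis twice and then stops: once to the scope $\mathrm{s(z,x)}$ of the leftmost $\lambda$, giving $\mathrm{(d)\;\vdash_1\exists\beta\forall z\,[\,s(z,x)=\beta(z)\,]'}$ with $\mathrm{x}$ as a free parameter, and once to the term $\mathrm{r(x)[\,\beta\,]}$, giving $\mathrm{(e)\;\vdash_1\exists\alpha\forall x\,[\,r(x)[\,\beta\,]=\alpha(x)\,]'}$ with $\beta$ as a free parameter; the target $\mathrm{\exists\alpha\forall x\exists\beta\,[\dots]}$ is then obtained from (d) and (e) by quantifier rules alone --- no pairing, no uniformization of the section in $\mathrm{x}$, and none of your apparatus for rewriting functor-position occurrences through state-augmented recursions. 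What your extra work buys is an explicit answer to exactly the question you raise about the alternation $\mathrm{\exists\alpha\forall x\exists\beta}$: the single $\alpha$ must agree at each $\mathrm{x}$ with the value of $\mathrm{r(x)[\,\beta_x\,]}$ for the $\mathrm{x}$-dependent section $\beta_x$, and this diagonal is \emph{not} a pure-logic consequence of (d) and (e) (one can build an extensional two-sorted structure satisfying both but not the conclusion); in the paper's derivation the $\exists\beta$-elimination discharging $\mathrm{\forall z\,[\,s(z,x)=\beta(z)\,]'}$ is performed while the assumption $\mathrm{\forall x\,[\,r(x)[\,\beta\,]=\alpha(x)\,]'}$, in which $\beta$ occurs free, is still open, so the eigenvariable condition is precisely the point at issue, and your bundling supplies the missing uniformity. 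Against that, your version still owes substantial detail before it is a proof: the verification that replacing $\beta(\mathrm{e})$ by $\gamma(\langle\mathrm{x},\mathrm{e}\rangle)$, and $\mathrm{rec(a,\beta,m)}$ by the projection of the state-augmented recursion, commutes with the translation $'$ (the Remark's part (b) covers only substitution of $\lambda$-less terms for a \emph{variable}, and $\mathrm{e}$, $\mathrm{a}$, $\mathrm{m}$ may themselves contain $\lambda$'s); and the case $\mathrm{{\bf S_2}={\bf IA_1}+QF\text{-}AC_{00}}$, where $\mathrm{rec}$ is \emph{not} available in $\mathrm{\bf S_1}$ and each of $\mathrm{f_{15}}$--$\mathrm{f_{18}}$, $\mathrm{f_{23}}$, $\mathrm{f_{24}}$ applied to the section must instead be recovered from $\mathrm{QF\text{-}AC_{00}}$ applied to a bounded-quantifier (course-of-values) graph, essentially as in Lemma 8.3 --- your sketch leans on $\mathrm{rec}$ and does not cover this.
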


\begin{proof}
By induction on the number $q$ of $\lambda$'s in the term r(x).

For $q = 0$, since $\mathrm{\;\vdash _1 \forall x \exists w\,r(x)
= w }$, just apply $\mathrm{QF\text{-}AC_{00}}$.

Let r(x) have $q>0$ $\lambda$'s, and let $\mathrm{\lambda z. s(z,
x)}$ be the first $\lambda$-occurrence in r(x), so that
$\mathrm{r(x) \equiv r(x)\,[\,\lambda z. s(z, x)\,]}$. We have to
show
$$
\mathrm{(a) \;\vdash_1 \exists \alpha
\forall x \exists \beta \,[\,\forall z \,[\,s(z, x) = \beta (z)
\,]' ~\&~ \,[\,r(x)[\,\beta \,] = \alpha (x)\,]'\,].}
$$
We may assume $ \mathrm{(b) \; \forall z \,[\,s(z, x) = \beta (z)
\,]'}$ and $\mathrm{(c) \;\forall x \,[\,r(x)[\,\beta \,] = \alpha
(x)\,]'} $, since we have both $\mathrm{(d) \;\vdash _1 \exists
\beta \forall z \,[\,s(z, x) = \beta (z) \,]'}$ and $\mathrm{(e)
\;\vdash _1 \exists \alpha \forall x \,[\,r(x)[\,\beta \,] =
\alpha (x)\,]'}$ by the inductive hypothesis. From (b) and (c)
with $\forall$x-elim., $\&$-introd. and $\exists \beta$-introd. we
get
$$
\mathrm{(f) \;\;\exists \beta \,[\,\forall z \,[\,s(z, x) = \beta
(z) \,]' ~\&~ [\,r(x)[\,\beta \,] = \alpha (x)\,]'\,]}.
$$
and with $\exists \beta$-elim. disch. (b) (from (d)), and then
$\forall$x-introd. (x is not free in (c)) and
$\exists\alpha$-introd., $\exists\alpha$-elim. disch. (c) (from
(e)), we get (a).
\end{proof}

\begin{lemma}
Let $\mathrm{t}$ be any term of $\mathrm{\,\bf S_2}$ with
$\mathrm{z}$ not free in it. Then
$$
\mathrm{\vdash _1 \exists ! z\,[\,t = z\,]'}.
$$
\end{lemma}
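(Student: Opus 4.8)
The plan is to prove the statement by induction on the number $q$ of $\lambda$'s occurring in the term $\mathrm{t}$, mirroring the induction used for the preceding lemma, and to extract the required unique existence from its existence counterpart together with the inductive hypothesis. For the base case $q=0$ the term $\mathrm{t}$ is $\lambda$-less, so by the definition of $\,'\,$ the translation $\mathrm{[\,t=z\,]'}$ is literally $\mathrm{t=z}$; then $\mathrm{\exists ! z\,(t=z)}$ is provable in $\mathrm{\bf S_1}$ using only the equality axioms of the predicate calculus (reflexivity gives $\mathrm{\exists z\,(t=z)}$, and $\mathrm{x=y\rightarrow(x=z\rightarrow y=z)}$ gives uniqueness).

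For the inductive step, suppose $\mathrm{t}$ has $q>0$ $\lambda$'s and let $\mathrm{\lambda x.s(x)}$ be its first $\lambda$-occurrence, so that, viewing $\mathrm{t=z}$ as the prime formula $\mathrm{P}$, we have $\mathrm{[\,t=z\,]'\equiv\exists\alpha\,[\,\forall x\,[\,s(x)=\alpha(x)\,]'~\&~[\,t[\,\alpha\,]=z\,]'\,]}$, where $\mathrm{t[\,\alpha\,]}$ results from replacing $\mathrm{\lambda x.s(x)}$ by a fresh function variable $\alpha$ and contains fewer $\lambda$'s than $\mathrm{t}$ (and $\mathrm{s(x)}$ does as well). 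Existence is then easy: by the preceding lemma pick $\alpha$ with $\mathrm{\forall x\,[\,s(x)=\alpha(x)\,]'}$, and by the inductive hypothesis applied to $\mathrm{t[\,\alpha\,]}$ pick $\mathrm{z}$ with $\mathrm{[\,t[\,\alpha\,]=z\,]'}$; conjoining and re-introducing $\exists\alpha$ then $\exists z$ yields $\mathrm{\exists z\,[\,t=z\,]'}$.

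The crux is uniqueness. Assuming $\mathrm{[\,t=z_1\,]'}$ and $\mathrm{[\,t=z_2\,]'}$, I would $\exists$-eliminate to obtain witnesses $\alpha_1,\alpha_2$ with $\mathrm{\forall x\,[\,s(x)=\alpha_i(x)\,]'}$ and $\mathrm{[\,t[\,\alpha_i\,]=z_i\,]'}$, and the main obstacle (the only place where real work is needed) is to show $\alpha_1=\alpha_2$ and then transfer this equality through $\mathrm{t[\,\cdot\,]}$. For the first, I fix $\mathrm{x}$ and apply the inductive hypothesis to the term $\mathrm{s(x)}$ to get $\mathrm{\exists ! w\,[\,s(x)=w\,]'}$; since $\mathrm{\alpha_i(x)}$ is a $\lambda$-less term, Remark (b) (free substitution of a $\lambda$-less term commutes with $\,'\,$) identifies $\mathrm{[\,s(x)=\alpha_i(x)\,]'}$ with the instance $\mathrm{[\,s(x)=w\,]'(w/\alpha_i(x))}$, so the uniqueness clause forces $\mathrm{\alpha_1(x)=\alpha_2(x)}$; generalizing on $\mathrm{x}$ gives $\alpha_1=\alpha_2$ in the extensional sense.

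For the transfer, the function-variable analogue of Remark (b) gives $\mathrm{\alpha_1=\alpha_2\vdash_1[\,t[\,\alpha_1\,]=z\,]'\leftrightarrow[\,t[\,\alpha_2\,]=z\,]'}$, so $\mathrm{[\,t[\,\alpha_2\,]=z_2\,]'}$ becomes $\mathrm{[\,t[\,\alpha_1\,]=z_2\,]'}$; then the inductive hypothesis applied to $\mathrm{t[\,\alpha_1\,]}$ yields $\mathrm{\exists ! z\,[\,t[\,\alpha_1\,]=z\,]'}$, whence $\mathrm{z_1=z_2}$, and combined with existence this gives $\mathrm{\exists ! z\,[\,t=z\,]'}$. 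I expect the delicate points to be the bookkeeping of fresh variables and the precise invocation of the substitution-commutation remarks, in particular ensuring that no captured variable spoils the identification $\mathrm{[\,s(x)=\alpha_i(x)\,]'\equiv[\,s(x)=w\,]'(w/\alpha_i(x))}$; crucially, no principle beyond $\mathrm{QF\text{-}AC_{00}}$ (already used in the preceding lemma, and not $\mathrm{AC_{00}!}$) is invoked anywhere.
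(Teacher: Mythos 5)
Your proposal is correct and follows essentially the same route as the paper's proof: induction on the number of $\lambda$'s, existence from Lemma 6.1 plus the inductive hypothesis, and uniqueness by applying the inductive hypothesis to $\mathrm{s(x)}$ to force $\alpha_1=\alpha_2$ (via the substitution Remark) and then transferring through $\mathrm{t[\,\cdot\,]}$ to conclude $\mathrm{z_1=z_2}$. The only cosmetic difference is that the paper phrases uniqueness as assuming $\mathrm{[\,t=y\,]'}$ and deriving $\mathrm{y=z}$ rather than taking two witnesses $\mathrm{z_1,z_2}$.
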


\begin{proof}
By induction on the number $q$ of $\lambda$'s in t.

For $q = 0$ the lemma follows by $\mathrm{\vdash _1 \exists ! z\;t
= z }$.

If t has $q>0$ $\lambda$'s and $\mathrm{\lambda x. s(x)}$ is the
first $\lambda$-occurrence in t, so that $\mathrm{t \equiv
t\,[\,\lambda x. s(x)\,]\,}$, we have $\mathrm{\,[\,t = z\,]'
\equiv \exists \alpha \,[\,\forall x \,[\,s(x) = \alpha (x) \,]'
~\&~ \,[\,t[\,\alpha\,] = z\,]'\,]\,.}$ We may assume
$$
\mathrm{(a)\;\;\forall x \,[\,s(x) = \alpha (x)
\,]'\;\;\text{and}\;\; (b)\;\,[\,t[\,\alpha\,] = z\,]',}
$$
since by Lemma 6.1, we have $\mathrm{(c)\;\vdash _1 \exists \alpha
\forall x \,[\,s(x) = \alpha (x) \,]'}$ and by the inductive
hypothesis $\mathrm{(d)\;\vdash _1 \exists ! z \,[\,t[\,\alpha\,]
= z\,]'.}$ By (a), (b) with $\exists\alpha$-introd. we get
$\mathrm{(e)\;[\,t = z\,]'.}$ For the uniqueness, we assume
$$
\mathrm{(f)\;\;
[\,t = y\,]'\, \equiv \exists \beta \,[\,\forall x \,[\,s(x) =
\beta (x) \,]' ~\&~ \,[\,t[\,\beta\,] = y\,]'\,]\,},
$$
so we may also assume
$$
\mathrm{(g1)\;\;
\forall x \,[\,s(x) = \beta (x) \,]'\;\;\text{and}\;\;
(g2)\;\; \,[\,t[\,\beta\,] = y\,]'.}
$$
From (a) and (g1) we get $\alpha = \beta$ as follows: from (a) and
(g1), by $\forall$-elims, we get
$$
\mathrm{(h1)\;\;[\,s(x) = \alpha (x) \,]'\;\;\text{
and} \;\;(h2)\;\;[\,s(x) = \beta (x) \,]',}
$$
respectively. By the inductive hypothesis, $\mathrm{\vdash _1
\exists ! z\,[\,s(x) = z\,]'},$ so by (h1), (h2) and the
\textsc{Remark} we get $\mathrm{\alpha (x) = \beta (x)}$, so
$\mathrm{\forall x \,\alpha (x) = \beta (x) }$ (x is not free in
(a), (g1)), so $\mathrm{(i)\; \alpha = \beta}.$ By the
\textsc{Remark}, from (g2), (i), we get $\mathrm{(j)\;
\,[\,t[\,\alpha\,] = y\,]'}$, and then from (b), (j) and (d) we
get y = z. So after $\exists\beta$-elim. disch. (g1), (g2), with
$\rightarrow$-introd. disch. (f), and with $\forall$y-introd. and
(e), we get
$$
\mathrm{[\,t = z\,]' ~\&~ \forall y ([\,t = y\,]' \rightarrow y =
z)},
$$
and with $\exists$z-introd., $\exists$z-elim. disch. (b),
$\exists\alpha$-elim. disch. (a) we complete the proof.
\end{proof}

\begin{lemma}
Let $\mathrm{t(x)}$ be any term of $\,\mathrm{\bf S_2}$ with
$\alpha$ not free in it. Then
$$
\mathrm{\vdash _1 \exists ! \alpha\, \forall x\, [\,t(x) =
\alpha (x)\,]' }.
$$
\end{lemma}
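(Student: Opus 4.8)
The plan is to derive existence directly from Lemma 6.1 and uniqueness from the pointwise unique existence of Lemma 6.2. Lemma 6.1 already gives $\vdash_1 \exists \alpha\, \forall x\,[\,t(x) = \alpha(x)\,]'$, so the only thing left is the uniqueness clause of $\exists!$: I must show $\vdash_1 \forall\alpha\,\forall\beta\,[\,\forall x\,[\,t(x)=\alpha(x)\,]' \,\&\, \forall x\,[\,t(x)=\beta(x)\,]' \rightarrow \alpha=\beta\,]$, and then combine the two halves by $\exists!$-introduction to conclude $\vdash_1 \exists!\alpha\,\forall x\,[\,t(x)=\alpha(x)\,]'$.

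For the uniqueness part I would first extract from Lemma 6.2 the ``at most one'' consequence of unique existence. Applying Lemma 6.2 to the term $t(x)$ (with $x$ kept as a free parameter and $z$ a fresh number variable) gives $\vdash_1 \exists!z\,[\,t(x)=z\,]'$, whence, by the standard intuitionistic argument that unique existence entails ``at most one'' (take the witness $z$ of $\exists!z$ and use that it equals each of two candidates, then appeal to symmetry and transitivity of number equality), $\vdash_1 \forall y\,\forall y'\,([\,t(x)=y\,]' \,\&\, [\,t(x)=y'\,]' \rightarrow y=y')$.

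Next I specialize $y, y'$ to the $\lambda$-less terms $\alpha(x), \beta(x)$. Here the \textsc{Remark} that free substitution of a number variable by a $\lambda$-less term commutes with $\,'\,$ is crucial: it yields $[\,t(x)=y\,]'(y/\alpha(x)) \equiv [\,t(x)=\alpha(x)\,]'$, and likewise for $\beta(x)$, so the specialization produces $[\,t(x)=\alpha(x)\,]' \,\&\, [\,t(x)=\beta(x)\,]' \rightarrow \alpha(x)=\beta(x)$. Assuming the two hypotheses $\forall x\,[\,t(x)=\alpha(x)\,]'$ and $\forall x\,[\,t(x)=\beta(x)\,]'$, a pair of $\forall x$-eliminations supplies both antecedents and gives $\alpha(x)=\beta(x)$; since $x$ is arbitrary, $\forall x\,\alpha(x)=\beta(x)$, which is precisely $\alpha=\beta$ by the extensional definition of function equality. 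Discharging the hypotheses and generalizing over $\alpha,\beta$ delivers the uniqueness clause, and together with Lemma 6.1 this closes the proof.

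The main obstacle is the bookkeeping in the specialization step: ensuring that $\alpha(x)$ and $\beta(x)$ are free for the variables being instantiated and that substituting them into $[\,t(x)=y\,]'$ genuinely recovers $[\,t(x)=\alpha(x)\,]'$ rather than a merely congruent formula. This is exactly what the commutation \textsc{Remark} secures, since $\alpha(x)$ and $\beta(x)$ contain no $\lambda$; hence the only real care needed is to choose the bound variables introduced by the translations in Lemmas 6.1 and 6.2 disjoint from $\alpha,\beta,x$, after which the remaining steps are routine.
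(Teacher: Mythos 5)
Your proposal is correct and follows essentially the same route as the paper: existence from Lemma 6.1, and uniqueness by combining the pointwise unique existence of Lemma 6.2 with the substitution \textsc{Remark} to get $\alpha(\mathrm{x})=\beta(\mathrm{x})$, then $\forall$-introduction and the extensional definition of function equality. The only cosmetic difference is that you prove the symmetric ``at most one'' form $\forall\alpha\forall\beta(\dots\rightarrow\alpha=\beta)$, whereas the paper's definition of $\exists!$ only requires the clause $\forall\beta(\dots\rightarrow\alpha=\beta)$ for the exhibited witness; your version trivially yields the required one.
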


\begin{proof}
By Lemma 6.1, $\;\,\mathrm{\vdash _1 \exists \alpha \, \forall x\,
[\,t(x) = \alpha (x)\,]' }$, so we assume $\mathrm{(a)\; \forall x
\,[\,t(x) = \alpha (x)\,]'}$ and, for the uniqueness,
$\mathrm{(b)\;\forall x \,[\,t(x) = \beta (x)\,]'}$, from which by
$\forall$-eliminations we get $\mathrm{(c)\; \,[\,t(x) = \alpha
(x)\,]'}$ and $\mathrm{ (d)\; \,[\,t(x) = \beta (x)\,]'}.$ By
Lemma 6.2 with (c), (d) and the \textsc{Remark}, we get
 $\mathrm{\alpha (x) = \beta (x)}$, so
$\mathrm{\forall x \,\alpha (x) = \beta (x) }$ (from (a), (b)), so
$\mathrm{(e)\;\; \alpha = \beta}.$ So from (a), (b), (e) with
$\rightarrow$-introd. disch. (b), we get
$$
\mathrm{\forall x \,[\,t(x) = \alpha (x)\,]'
~\&~\forall \beta \,[\,\forall x \,[\,t(x) = \beta (x)\,]'
\rightarrow \alpha = \beta \,]\,}.
$$
So after $\exists\alpha$-introd. and $\exists \alpha$-elim. disch.
(a) we complete the proof.
\end{proof}

For the treatment of the axioms not included in the systems of
\cite{JRMPhD}, the case of the axioms REC is trivial, since there
are no $\lambda$-occurrences in them, and the case of the axiom
schema $\mathrm{QF_t\text{-}AC_{00}}$ is obtained as follows.
\begin{lemma} Let
$$
\mathrm{E \equiv
\forall x \exists y\,
t(\langle x, y \rangle ) = 0
 \rightarrow \exists \alpha \forall x \,
t(\langle x, \alpha (x) \rangle ) = 0}
$$
where x is free for y in $\mathrm{t(\langle x, y \rangle )}$ and
$\alpha$ does not occur in $\mathrm{t(\langle x, y \rangle )}$ be
an instance of $\,\mathrm{QF _t\text{-}AC_{00}}$ in $\mathrm{\bf
S_2}$. Then $\mathrm{\vdash _1 E'}$.
\end{lemma}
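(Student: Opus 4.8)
The plan is to exploit the fact that the translation $\,'\,$ preserves the logical operators, so that $\mathrm{E'}$ is literally the implication
$$
\mathrm{\forall x \exists y\,[\,t(\langle x, y\rangle) = 0\,]' \rightarrow \exists \alpha \forall x\,[\,t(\langle x, \alpha(x)\rangle) = 0\,]'},
$$
and then to replace the $\lambda$-containing translated atom $\mathrm{[\,t(\langle x, y\rangle) = 0\,]'}$ by the genuinely $\lambda$-less atom $\mathrm{\gamma(\langle x, y\rangle) = 0}$, for a function variable $\gamma$ that ``evaluates'' $\mathrm{t}$. Once this replacement is justified, the desired $\mathrm{\vdash_1 E'}$ reduces to a single application of $\mathrm{QF_t\text{-}AC_{00}}$ inside $\mathrm{\bf S_1}$.

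Concretely, I would first apply Lemma 6.1 to the term $\mathrm{t(w)}$, with $\mathrm{w}$ a fresh number variable in the argument slot and $\gamma$ a fresh function variable, to obtain $\mathrm{\vdash_1 \exists \gamma\,\forall w\,[\,t(w) = \gamma(w)\,]'}$, and assume a witness $\gamma$. Specializing $\mathrm{w}$ to the $\lambda$-less term $\mathrm{\langle x, y\rangle}$ and invoking the \textsc{Remark} that free substitution of a $\lambda$-less term commutes with $\,'\,$, I get $\mathrm{[\,t(\langle x, y\rangle) = \gamma(\langle x, y\rangle)\,]'}$. The heart of the proof is then the bridging equivalence $\mathrm{\vdash_1 \forall x \forall y\,(\,[\,t(\langle x, y\rangle) = 0\,]' \leftrightarrow \gamma(\langle x, y\rangle) = 0\,)}$. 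For the forward direction I would use Lemma 6.2, which gives $\mathrm{\vdash_1 \exists ! z\,[\,t(\langle x, y\rangle) = z\,]'}$: since the \textsc{Remark} makes substitution of the $\lambda$-less terms $\mathrm{0}$ and $\mathrm{\gamma(\langle x, y\rangle)}$ for $\mathrm{z}$ commute with $\,'\,$, both $\mathrm{[\,t(\langle x, y\rangle) = 0\,]'}$ and $\mathrm{[\,t(\langle x, y\rangle) = \gamma(\langle x, y\rangle)\,]'}$ are instances of this unique-existence predicate, so uniqueness forces $\mathrm{\gamma(\langle x, y\rangle) = 0}$. For the converse I would use \textsc{Remark} (b): from the equation $\mathrm{\gamma(\langle x, y\rangle) = 0}$ between $\lambda$-less terms, replacing one by the other inside the translated atom preserves $\,'\,$-equivalence, so $\mathrm{[\,t(\langle x, y\rangle) = \gamma(\langle x, y\rangle)\,]' \leftrightarrow [\,t(\langle x, y\rangle) = 0\,]'}$, and the left-hand side already holds.

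With the bridging equivalence established, the remainder is mechanical. Assuming the hypothesis $\mathrm{\forall x \exists y\,[\,t(\langle x, y\rangle) = 0\,]'}$ of $\mathrm{E'}$, the equivalence rewrites it as $\mathrm{\forall x \exists y\,\gamma(\langle x, y\rangle) = 0}$; since $\mathrm{\gamma(w)}$ is a $\lambda$-less term, this is exactly the hypothesis of an instance of $\mathrm{QF_t\text{-}AC_{00}}$ living in $\mathrm{\bf S_1}$, whose conclusion yields $\mathrm{\exists \alpha \forall x\,\gamma(\langle x, \alpha(x)\rangle) = 0}$. Specializing the bridging equivalence at $\mathrm{y := \alpha(x)}$ (the term $\mathrm{\langle x, \alpha(x)\rangle}$ again being $\lambda$-less) converts this back to $\mathrm{\exists \alpha \forall x\,[\,t(\langle x, \alpha(x)\rangle) = 0\,]'}$, the conclusion of $\mathrm{E'}$; finally $\exists\gamma$-elimination discharges $\gamma$, which does not occur in $\mathrm{E'}$. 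The step I expect to demand the most care is the bridging equivalence itself: everything there turns on correctly combining the uniqueness of Lemma 6.2 with the two substitution \textsc{Remark}s, checking that $\mathrm{\langle x, y\rangle}$, $\mathrm{\gamma(\langle x, y\rangle)}$, $\mathrm{0}$ and $\mathrm{\langle x, \alpha(x)\rangle}$ are all $\lambda$-less (so substitution genuinely commutes with $\,'\,$) and that $\gamma$ and $\alpha$ are chosen fresh to avoid any variable capture.
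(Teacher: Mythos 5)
Your proposal is correct and follows essentially the same route as the paper's proof: introduce via Lemma 6.1 a function variable evaluating $\mathrm{t}$, use Lemma 6.2 together with the substitution \textsc{Remark}s to pass between $\mathrm{[\,t(\langle x, y\rangle)=0\,]'}$ and the $\lambda$-less atom $\mathrm{\gamma(\langle x, y\rangle)=0}$, apply $\mathrm{QF_t\text{-}AC_{00}}$ in $\mathrm{\bf S_1}$, and transfer back. The only difference is presentational --- you isolate the two transfer directions as a single ``bridging equivalence'' where the paper carries them out inline at the points of use.
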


\begin{proof}
We have that
$$
\mathrm{E' \equiv
\forall x \exists y\,
\,[\,t\,(\langle x, y \rangle ) = 0 \,]'\,
 \rightarrow \exists \alpha \forall x \,
\,[\,t(\langle x, \alpha (x) \rangle ) = 0\,]'}.
$$
By Lemma 6.1, $(a)\;\mathrm{\vdash _1 \exists \beta\,
 \forall w\, [\,t(w) =
\beta (w)\,]' }$, so we assume $ \mathrm{(a1)\;
 \forall w \,[\,t(w) =
\beta (w)\,]' } $ from which we get by $\forall$-elim. with the
\textsc{Remark}
$$
\mathrm{(a2)\;\;\,[\,t(\langle x, y \rangle ) =
 \beta (\langle x, y \rangle ) \,]'\,}.
$$
We assume now $\mathrm{(b)\; \forall x \exists y[\,t(\langle x, y
\rangle ) = 0 \,]',}$ so we get $\mathrm{(c)\; \exists
y\,[\,t(\langle x, y \rangle ) = 0 \,]'\,} $ and we may assume $
\mathrm{(c1)\;\,[\,t(\langle x, y \rangle ) = 0 \,]'\,}.$ By (c1),
(a2), the \textsc{Remark} and Lemma 6.2, we have $\mathrm{\beta
(\langle x, y \rangle ) = 0 }$ so we get $ \mathrm{(d)\; \exists y
\,\beta (\langle x, y \rangle ) = 0 }$ and discharge (c1) with
$\exists$y-elim. (from (c)).  With $\forall$x-introd. to (d) (x
not free in (a1), (b)),  we get $ \mathrm{(e)\;\forall x \exists y
\,\beta (\langle x, y \rangle ) = 0}$. Now by applying $\mathrm{QF
_t\text{-}AC_{00}}$ to (e) we get
$$
\mathrm{(f)\;\;\exists \alpha \forall x
 \,\beta (\langle x, \alpha (x) \rangle ) = 0 }.
$$
We assume now $ \mathrm{(g)\;\forall x
 \,\beta (\langle x, \alpha (x) \rangle ) = 0 }$ from which we get
$ \mathrm{(g1)\;\beta (\langle x, \alpha (x) \rangle ) = 0 }$.
With $\forall$w-elim. from (a1) and the \textsc{Remark} we get
$\mathrm{\;[\,t (\langle x, \alpha (x) \rangle ) = \beta (\langle
x, \alpha (x) \rangle )\,]' } $, so by (g1) and the
\textsc{Remark} after $\forall$-introd. (x not free in (a1), (b),
(g)), we have $ \mathrm{(h)\;\forall x
 \,[\,t (\langle x, \alpha (x) \rangle ) = 0\,]' },$
and finally
$$
\mathrm{(i)\;\;\exists \alpha \forall x
\,[ \,t (\langle x, \alpha (x) \rangle ) = 0 \,]'\,},
$$
with $\exists\alpha$-introd. and $\exists \alpha$-elim. disch. (g)
(from (f)). After completing the $\exists \beta$-elim. disch. (a1)
(from (a)), with $\rightarrow$-introd. disch. (b)  we get
$\mathrm{\vdash _1 E'}$.
\end{proof}

\begin{theorem}
$\mathrm{(a)}$ $\mathrm{{\bf EL}}$ is a definitional extension of
$\mathrm{{\bf EL}-\lambda}$.

$\mathrm{(b)}$ $\mathrm{{\bf IA_1} + QF\text{-}AC_{00}}$ is a
definitional extension of $\mathrm{{\bf IA_1} +
QF\text{-}AC_{00}-\lambda}$.
\end{theorem}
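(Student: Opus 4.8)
The plan is to verify the four elimination relations I--IV of 4.1.1 for the translation $'\,$ defined above, with $\mathrm{\bf S_1} = \mathrm{\bf S_2} - \lambda$; since $\mathrm{\bf S_2}$ ranges over both $\mathrm{\bf EL}$ and $\mathrm{\bf IA_1} + \mathrm{QF\text{-}AC_{00}}$, one argument settles both (a) and (b). The whole development runs parallel to the treatment of the recursor in Section 4 and to the proof of the corresponding result for $\mathrm{\bf M}$ in \cite{JRMPhD}; the essential point is that the one place in \cite{JRMPhD} where $\mathrm{AC_{00}!}$ is genuinely used, namely to obtain $\mathrm{\vdash _1 \exists ! \alpha \, \forall x \,[\,t(x) = \alpha (x)\,]'}$, is now supplied by Lemma 6.3, which rests only on $\mathrm{QF\text{-}AC_{00}}$.

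First I would dispose of elimination relations I and IV, which hold by construction: $'\,$ is the identity on $\lambda$-less formulas and is defined on composite formulas so as to commute with the connectives and with both sorts of quantifier. A routine double induction (on the number of logical operators, with the prime case by induction on the number of $\lambda$-occurrences) shows in addition that $'\,$ neither introduces nor removes free variables.

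Next I would establish elimination relation II, $\mathrm{\vdash _2 E' \leftrightarrow E}$, by induction on the number of logical operators in $\mathrm{E}$, the basis being the case of prime $\mathrm{E}$, treated by a secondary induction on the number of $\lambda$-occurrences. There, for the leftmost free occurrence $\mathrm{\lambda x.s(x)}$, one proves in $\mathrm{\bf S_2}$ the equivalence of $\mathrm{P\,[\,\lambda x.s(x)\,]}$ with $\mathrm{\exists \alpha\,[\, \forall x\, [\,s(x) = \alpha (x)\,]' ~\&~ [\,P\,[\,\alpha \,]\,]'\,]}$, witnessing $\alpha$ by $\mathrm{\lambda x.s(x)}$ via $\lambda$-conversion and using the replacement theorem together with the inductive hypothesis (applicable since both $\mathrm{[\,s(x) = \alpha (x)\,]'}$ and $\mathrm{[\,P\,[\,\alpha \,]\,]'}$ contain fewer $\lambda$'s). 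The inductive step for connectives and quantifiers is then immediate from the preservation of logical structure.

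The main work, and the chief obstacle, is elimination relation III: if $\mathrm{\Gamma \vdash _2 E}$ then $\mathrm{\Gamma ' \vdash _1 E'}$. Following the pattern of Lemmas 4.11 and 4.12, I would first show $\mathrm{\vdash _1 E'}$ for every axiom $\mathrm{E}$ of $\mathrm{\bf S_2}$. The propositional axioms, the quantifier schemata (10N, 10F and their existential analogues), the induction schema, and $\lambda$-conversion are treated exactly as in \cite{JRMPhD}, the sole appeal to $\mathrm{AC_{00}!}$ there being replaced by Lemma 6.3, while the Remark following Lemma 6.1 (commutation of substitution of $\lambda$-less terms with $'\,$) supplies the substitution facts needed for the quantifier axioms. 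The axioms REC contain no $\lambda$, so $\mathrm{E' \equiv E}$ and they are already axioms of $\mathrm{\bf S_1}$; the instances of $\mathrm{QF_t\text{-}AC_{00}}$ are precisely Lemma 6.4. Finally, since $'\,$ preserves the logical operators and introduces no free variables, each application of an inference rule of $\mathrm{\bf S_2}$ maps under $'\,$ to an application of the same rule in $\mathrm{\bf S_1}$ (as in Lemma 4.12); hence, by induction on the length of the deduction and provided no variables are varied, $\mathrm{\Gamma \vdash _2 E}$ yields $\mathrm{\Gamma ' \vdash _1 E'}$. This establishes III, so $\mathrm{\bf S_2}$ is a definitional extension of $\mathrm{\bf S_1}$, proving both (a) and (b).
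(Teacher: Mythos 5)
Your proposal is correct and follows essentially the same route as the paper: the paper likewise reduces the theorem to the $\lambda$-elimination argument of \cite{JRMPhD}, replacing its single essential use of $\mathrm{AC_{00}!}$ (to obtain $\vdash _1 \exists ! \alpha\, \forall x\, [\,t(x) = \alpha (x)\,]'$) by Lemma 6.3, and handling the new axioms exactly as you do --- REC trivially since it is $\lambda$-less, and $\mathrm{QF_t\text{-}AC_{00}}$ via Lemma 6.4. Your explicit sketches of elimination relations I, II and IV and of the rule-preservation step for III are consistent with what the paper delegates to \cite{JRMPhD} and to the pattern of Lemmas 4.6--4.12.
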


\section{Comparison of {\bf M} with {\bf BIM}, {\bf H} and {\bf WKV}}

\subsection{The formal system {\bf BIM}}

\subsubsection{}

The formal system {\bf BIM} of \emph{Basic Intuitionistic
Mathematics} (described variously in \cite{Veldman1},
\cite{Veldman2}, \cite{Veldman3}) has been introduced by W.
Veldman to serve as a formal basis for intuitionistic mathematics.
This system has a very small collection of primitives: the most
recent version in \cite{Veldman3} includes constants for 0, for
the unary function with constant value 0, for the identity
function, for the successor, and constants J, K, L, for a binary
pairing function (which is taken to be onto the natural numbers)
with the corresponding projection functions. Besides axioms
concerning equality, the non-logical axioms comprise the axiom
schema of mathematical induction, finitely many axioms on the
function constants, presented as a single axiom,  and axioms
guaranteeing the closure of the set of (one-place
number-theoretic) functions under the operations of composition,
pairing and primitive recursion, again presented as conjuncts of a
single axiom. Constants for primitive recursive functions and
relations can be added conservatively. There is no
$\lambda$-abstraction. The following axiom of \emph{unbounded
search} is also included, as the last conjunct of the axiom on the
closure of the set of functions:
$$
\mathrm{\forall \alpha \,[\,\forall m \exists n\, \alpha (m, n)=0
\rightarrow \exists\gamma \forall m\,[\, \alpha (m, \gamma (m)) = 0
~\&~ \forall n < \gamma (m) \,\alpha (m,n) \neq 0 \,]\,],}
$$
where $\mathrm{\alpha (m,n)}$ abbreviates $\mathrm{\alpha
(J(m,n))}$,\footnote{Note that in the absence of more function
constants $\mathrm{x<y}$ can be introduced as abbreviating
$\mathrm{\exists z ( S(z) + x = y)}$ just by adding + (addition)
to the formalism.} which guarantees closure under the unbounded
least number operator and through its consequence
$$
 \mathrm{\forall \alpha \,[\,\forall m \exists n\, \alpha (m,
n)=0 \rightarrow \exists\gamma \forall m\, \alpha (m, \gamma
(m)) = 0\,]}
$$
(called \emph{the minimal axiom of countable choice}) expresses,
again in the form of a single axiom, countable numerical choice
for a special case of quantifier-free formulas.

\begin{proposition}
Over {\bf BIM}, the schema $\mathrm{CF\!_d}$ entails
$\mathrm{AC_{00}!}$.
\end{proposition}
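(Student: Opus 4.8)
The plan is to mirror the proof of Theorem 3.3, with the \emph{minimal axiom of countable choice} of {\bf BIM} playing the role that $\mathrm{QF\text{-}AC_{00}}$ plays there. First I would fix an arbitrary instance of $\mathrm{AC_{00}!}$ and assume its hypothesis $\forall x\,\exists!y\,A(x,y)$. Since {\bf BIM} proves the decidability of number equality, the (easy) argument of Lemma 2.5 applies equally over {\bf BIM}, so from unique existence I obtain $\forall x\,\forall y\,[A(x,y)\vee\neg A(x,y)]$. I would then transfer this decidable predicate to a single number variable by means of the pairing constant $J$ and its projections $K, L$, which here replace the primitive recursive coding $\langle\cdot\rangle$ and $(\cdot)_i$ used over $\mathrm{IA_1}$. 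Setting $B(w)\equiv A(K(w),L(w))$, specializing the decidability just obtained for $K(w)$ and $L(w)$ gives $\forall w\,(B(w)\vee\neg B(w))$.

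Applying $\mathrm{CF\!_d}$ to $B$ produces a function $\beta$ with $\forall w\,[\,\beta(w)\leq 1 ~\&~ (\beta(w)=0\leftrightarrow A(K(w),L(w)))\,]$; since $J$ is onto and $K(J(x,y))=x$, $L(J(x,y))=y$, this yields $\beta(J(x,y))=0\leftrightarrow A(x,y)$. From the hypothesis I would drop uniqueness to get $\forall x\,\exists y\,A(x,y)$, whence $\forall x\,\exists y\,\beta(J(x,y))=0$, that is, in {\bf BIM}'s abbreviation, $\forall m\,\exists n\,\beta(m,n)=0$. This is precisely the antecedent of the minimal axiom of countable choice applied to $\beta$, so instantiating that axiom at $\alpha:=\beta$ gives $\exists\gamma\,\forall m\,\beta(m,\gamma(m))=0$, i.e. $\exists\gamma\,\forall x\,\beta(J(x,\gamma(x)))=0$. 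Unwinding the equivalence for $\beta$ then yields $\exists\gamma\,\forall x\,A(x,\gamma(x))$, the conclusion of the chosen instance of $\mathrm{AC_{00}!}$.

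The whole argument is structurally identical to that of Theorem 3.3, so the only genuine work lies in confirming that the preliminary facts transfer to {\bf BIM}. I expect the main obstacle to be bookkeeping rather than conceptual: one must check that {\bf BIM} indeed proves decidable number equality (so that uniqueness yields decidability as in Lemma 2.5), and that $J, K, L$ obey the projection identities needed for the substitution $w\mapsto J(x,y)$. The conceptual point is that the characteristic function delivered by $\mathrm{CF\!_d}$ already has exactly the quantifier-free shape $\beta(J(x,y))=0$ on which {\bf BIM}'s minimal choice axiom is designed to operate, so no stronger choice principle is required.
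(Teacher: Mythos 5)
Your argument is correct and is essentially the paper's own proof: the paper likewise reduces to the argument of Theorem 3.2 (which you cite as 3.3), using the decidability of number equality (provable in {\bf BIM} by induction), the constants J, K, L in place of the sequence coding of $\mathrm{\bf IA_1}$, and the axiom of unbounded search (via its consequence, the minimal axiom of countable choice) in place of $\mathrm{QF\text{-}AC_{00}}$. The only bookkeeping point you omit, which the paper flags explicitly, is that one must first conservatively add $+$ to {\bf BIM} so that $<$, and hence the clause $\beta(w)\leq 1$ in $\mathrm{CF\!_d}$, can be expressed.
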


\begin {proof}
The proof is similar to that of Theorem 3.2 since {\bf BIM} proves
the decidability of number equality (by the axiom schema of
induction). We only have to add (conservatively) addition + to
{\bf BIM} so that $<$ can be expressed efficiently, and use the
constants J, K, L instead of the pairing and projection functions
of $\mathrm{\bf IA_1}$ and the axiom of unbounded search instead
of $\mathrm{QF\text{-}AC_{00}}$.
\end{proof}

In fact, over {\bf BIM} without the axiom of unbounded search, the
conjunction of the axiom of unbounded search with the schema
$\mathrm{CF\!_d}$ is equivalent to $\mathrm{AC_{00}!}$. The one
direction is given by the preceding proposition, and for the
converse, $\mathrm{AC_{00}!}$ entails $\mathrm{CF\!_d}$ like in
Proposition 3.1, and it is easy to see that $\mathrm{AC_{00}!}$
entails the axiom of unbounded search adapting Proposition 2.7,
(again by adding addition + to {\bf BIM} so that $<$ can be
expressed efficiently).

\subsubsection{}

The result of \cite{JRMPhD} on  $\lambda$-eliminability applies to
any formal system of the sort that we are studying which has at
least function constants  $0,\, ', +, \cdot , \mathrm{exp}$, and
assumes $\mathrm{AC_{00}!}$. The system {\bf BIM} can be extended
definitionally to a system {\bf BIM$'$}, so that it includes all
these constants. Consequently, the $\lambda$ symbol with the
related formation rule and axiom schema can be added
definitionally to the system {\bf BIM$'$} + $\mathrm{CF\!_d}$.
Consider now the system obtained by the addition of $\lambda$,
{\bf BIM$'\,$}+$\,\mathrm{CF\!_d}\,$+$\,\lambda$. In this system,
thanks to the presence of $\lambda$, we can easily obtain  Lemma
5.3(b) of [FIM] from the following \emph{Axiom of Primitive
Recursion} of {\bf BIM}:
$$
\mathrm{\forall \alpha \forall \beta\exists \gamma \forall m
\forall n \,[\,\gamma (m, 0)=\alpha (m) ~\&~\gamma (m, S(n))=
\beta (m, n,\gamma (m, n))\,]\,.}
$$
By the same method that the recursor constant rec is added to {\bf
M}, all the additional function and functional constants of {\bf
M} can be added definitionally, successively according to their
primitive recursive descriptions, to this extended system. Let
$\mathrm{{\bf BIM^+}}$ be the result of adding to $\mathrm{{\bf
BIM}}$ $\lambda$ and all the additional constants of {\bf M}, with
their formation rules and axioms. Let {\bf M$^j$} be the trivial
extension of {\bf M} obtained by adding the constants J, K, L
(note that there exists a primitive recursive pairing function
which is onto the natural numbers and has primitive recursive
inverses, and of course all three can be added definitionally to
{\bf M}, to serve as interpretations for J, K, L). Then
$\mathrm{{\bf BIM^+}}$ + $\mathrm{CF\!_d}$ and {\bf M$^j$}
coincide up to trivial differences, and we have the following:

\begin{theorem}
$\mathrm{{\bf M^j}}$ is a definitional extension of
$\;\mathrm{{\bf BIM}} + \,\mathrm{CF\!_d}$.
\end{theorem}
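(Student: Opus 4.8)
The plan is to reach $\mathrm{{\bf M^j}}$ from {\bf BIM} $+\,\mathrm{CF\!_d}$ by a finite chain of definitional extensions, to exhibit a common language in which the top of this chain agrees with $\mathrm{{\bf M^j}}$ up to trivial notation, and then to appeal to the transitivity of definitional extensions (a composition of definitional extensions being again definitional, hence inessential). The first link in the chain is to pass from {\bf BIM} to {\bf BIM$'$} by adjoining the arithmetical constants $0, ', +, \cdot, \exp$. This is a definitional extension: {\bf BIM} proves closure of its function universe under composition, pairing and primitive recursion, so each of these primitive recursive functions is provably available and its constant is introduced by its defining equations, exactly as constants for primitive recursive functions are added conservatively (cf. \cite{IM}, $\S$74). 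The schema $\mathrm{CF\!_d}$ is carried along into the richer language at each step.

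Second, and this is where $\mathrm{CF\!_d}$ does the essential work, I would add $\lambda$-abstraction definitionally. By the preceding Proposition, over {\bf BIM} the schema $\mathrm{CF\!_d}$ entails $\mathrm{AC_{00}!}$, so {\bf BIM$'$} $+\,\mathrm{CF\!_d}$ proves $\mathrm{AC_{00}!}$ and contains the constants $0, ', +, \cdot, \exp$. Hence the hypotheses of the $\lambda$-eliminability theorem of \cite{JRMPhD} are met, and $\lambda$, together with its functor-formation rule and the $\lambda$-conversion schema, can be added definitionally, yielding {\bf BIM$'$} $+\,\mathrm{CF\!_d}+\lambda$.

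Third, with $\lambda$ now available, I would derive Lemma 5.3(b) of \cite{FIM} (definition by primitive recursion in the form used for {\bf M}) from the Axiom of Primitive Recursion of {\bf BIM}, and then, by the very method of Section 4 by which rec was added to {\bf M}, introduce one at a time the remaining function and functional constants of {\bf M} (including rec), each through its primitive recursive description in terms of constants already present; every such step is a definitional extension. Calling the result {\bf BIM$^+$}, it remains to check that {\bf BIM$^+$} $+\,\mathrm{CF\!_d}$ and $\mathrm{{\bf M^j}}$ coincide up to trivial notational differences. They share the two-sorted arithmetical basis, the same function and functional constants and $\lambda$, with the onto pairing $J$ and its projections $K, L$ of {\bf BIM} matching a primitive recursive onto pairing function definable in {\bf M}; and they prove the same theorems, since both prove $\mathrm{AC_{00}!}$ (by axiom on the $\mathrm{{\bf M^j}}$ side, by the preceding Proposition on the other), while conversely $\mathrm{AC_{00}!}$ recovers {\bf BIM}'s closure and unbounded-search axioms and the remaining {\bf M}-axioms are obtained by their defining equations.

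The step I expect to be most delicate is this final reconciliation. Two subtleties must be handled with care: that the distinct coding conventions — {\bf BIM}'s pairing $J, K, L$ (taken onto $\mathbb{N}$) versus the sequence coding $\langle\cdot\rangle$ and projections of {\bf M} — are interderivable, so that the translation between the two presentations is genuinely trivial; and that each of {\bf BIM}'s conjoined closure axioms (composition, pairing, primitive recursion, unbounded search) is a theorem of $\mathrm{{\bf M^j}}$, for which I would adapt the argument that $\mathrm{AC_{00}!}$ entails the axiom of unbounded search noted after the preceding Proposition (via an analogue of Proposition 2.7, after adjoining $+$ so that $<$ is expressed efficiently). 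Granting these, transitivity of definitional extensions yields that $\mathrm{{\bf M^j}}$ is a definitional extension of {\bf BIM} $+\,\mathrm{CF\!_d}$.
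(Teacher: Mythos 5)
Your proposal is correct and follows essentially the same route as the paper: pass from {\bf BIM} to {\bf BIM$'$} by definitionally adjoining $0, ', +, \cdot, \exp$; use Proposition 7.1 ($\mathrm{CF\!_d}$ entails $\mathrm{AC_{00}!}$ over {\bf BIM}) to invoke the $\lambda$-eliminability result of \cite{JRMPhD} and add $\lambda$ definitionally; derive Lemma 5.3(b) of \cite{FIM} from {\bf BIM}'s Axiom of Primitive Recursion; add the remaining constants of {\bf M} by the method of Section 4; and observe that the resulting system ${\bf BIM^+} + \mathrm{CF\!_d}$ coincides with $\mathrm{{\bf M^j}}$ up to trivial differences. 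Your extra attention to the final reconciliation (interderivability of the pairing conventions and provability of {\bf BIM}'s closure and unbounded-search axioms in $\mathrm{{\bf M^j}}$) only fills in details the paper leaves implicit.
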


We conclude that {\bf BIM} + $\mathrm{CF_d}$ and {\bf M} are
essentially equivalent, and also essentially equivalent with {\bf
EL} + $\mathrm{CF_d}$. It is remarkable that, in developing the
theory within {\bf BIM}, W. Veldman defines a set of natural
numbers to be decidable if it has a characteristic function, as
this means that $\mathrm{CF\!_d}$ is implicitly assumed.

\begin{theorem}
$\;\mathrm{{\bf BIM}}$ does not prove $\,\mathrm{CF\!_d}$.
\end{theorem}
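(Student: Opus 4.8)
The plan is to separate the independence of $\mathrm{CF\!_d}$ from {\bf BIM} into two mutually supporting facts: a metamathematical one, that {\bf BIM} proves no instance of $\mathrm{CF\!_d}$ beyond what is already forced, and a semantic one, that some decidable predicate fails to have a characteristic function in a suitable classical model. The cleanest route, paralleling Theorem 3.4, is to build a classical model of the classical companion of {\bf BIM} in which the law of excluded middle holds but a specific decidable predicate lacks a recursive characteristic function. First I would form $\mathrm{{\bf BIM}^\circ}$, the classical theory obtained by replacing ex falso with double negation elimination, exactly as $\mathrm{{\bf T}^\circ}$ was formed in 3.3, so that anything {\bf BIM} proves is provable in $\mathrm{{\bf BIM}^\circ}$, and a countermodel of $\mathrm{CF\!_d}$ over $\mathrm{{\bf BIM}^\circ}$ suffices.

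The model I would use is again $\mathcal{GR}$, the two-sorted structure with $\mathbb{N}$ for the first sort and the general recursive functions for the second sort, now interpreting the very sparse signature of {\bf BIM}: the constant $0$, the zero function, the identity, successor, and the pairing/projection constants $J, K, L$, together with any primitive recursive function constants one has added conservatively. The key verifications are that $\mathcal{GR}$ validates the {\bf BIM} closure axioms --- closure under composition, pairing, primitive recursion, and the axiom of unbounded search --- and this holds because the general recursive functions are closed under all these operations (unbounded search corresponds precisely to closure under the unbounded least-number operator, which preserves totality on the points where a witness exists). Induction and the equality axioms hold in any standard-$\mathbb{N}$ model, and excluded middle holds because the interpretation is classical. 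Thus $\mathcal{GR}\models\mathrm{{\bf BIM}^\circ}$.

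To refute $\mathrm{CF\!_d}$ in $\mathcal{GR}$ I would take the predicate $B(x)\equiv\exists y\,T(x,x,y)$, with $T$ the Kleene predicate; since excluded middle holds in $\mathcal{GR}$, the decidability hypothesis $\forall x\,(B(x)\vee\neg B(x))$ is satisfied, yet the characteristic function of the halting set is not general recursive, so no $\beta$ in the second-sort universe witnesses the conclusion of $\mathrm{CF\!_d}$. Hence $\mathrm{CF\!_d}$ fails in $\mathcal{GR}$, and since every {\bf BIM}-theorem is true in $\mathcal{GR}$, {\bf BIM} does not prove $\mathrm{CF\!_d}$. One subtlety to flag is that {\bf BIM} uses a pairing function $J$ taken to be \emph{onto} $\mathbb{N}$ with projections $K,L$; I would check that the standard recursive pairing together with its recursive inverses gives interpretations in $\mathcal{GR}$ satisfying the single {\bf BIM} axiom packaging the properties of $J,K,L$, which is routine but must be stated.

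The main obstacle is verifying that $\mathcal{GR}$ genuinely models the compound {\bf BIM} closure axiom in full --- in particular the unbounded-search conjunct --- since this is the one place where the second-sort universe must be demonstrably closed under a non-trivial operation. The delicate point is that the hypothesis $\forall m\,\exists n\,\alpha(m,n)=0$ must be read classically as true-in-$\mathcal{GR}$ totality of the search, and one must confirm that the resulting least-witness function $\gamma$ is itself general recursive whenever $\alpha$ is; this follows because unbounded minimization of a total recursive predicate that is everywhere satisfiable yields a total recursive function. Once that closure is in hand, the rest is the same bookkeeping as in the proof of Theorem 3.4(a), and the independence follows immediately.
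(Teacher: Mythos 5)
Your proposal is correct and follows essentially the same route as the paper, whose proof of this theorem simply says to repeat the argument of Theorem 3.4 with the function variables ranging over the general recursive functions; your construction of $\mathcal{GR}$ for the {\bf BIM} signature, the verification of the closure axioms (including unbounded search) for total recursive predicates, and the refutation of $\mathrm{CF\!_d}$ via the Kleene $T$-predicate are exactly what that reference unpacks to. The extra care you take over the surjective pairing $J, K, L$ and the unbounded-search conjunct is a sensible filling-in of details the paper leaves implicit.
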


\begin{proof}
This is obtained as in the case of {\bf EL}, Thm. 3.4,
interpreting the function variables as varying over the general
recursive functions of one number variable.
\end{proof}

\subsection{The formal system {\bf H}}

In \cite{Howard-Kreisel}, the formal system of \emph{elementary
intuitionistic analysis} {\bf H} is used. {\bf H} differs from
{\bf BIM} essentially only in that it does not assume the axiom of
unbounded search (some primitive recursive functions and relations
are introduced in the system from the beginning, but this is an
inessential difference).

Like {\bf IA$_1$} and  {\bf HA$_1$}, {\bf H} has a classical model
consisting of the primitive recursive functions. Essentially {\bf
H} is a proper subtheory of {\bf BIM}.

\subsection{The formal system {\bf WKV}}

In \cite{Loeb} Iris Loeb presents and uses the formal system {\bf
WKV} (for ``Weak Kleene-Vesley''). The constants include 0, S, +,
$\cdot$, =, j (for a pairing function which is onto) and j$_1$,
j$_2$ (projections). This system has $\lambda$-abstraction. Among
the axioms assumed, there is an axiom schema of primitive
recursion in the version
$$
\mathrm{\exists \beta \,[\, \beta (0) = t ~\&~ \forall y\,
\beta (S(y)) = r(j(y, \beta (y)))\,]\,},
$$
where t is a term and r a functor, and $\mathrm{AC_{00}!}$. We
observe that the assumed version of the axiom schema of primitive
recursion is very similar to  Lemma 5.3(b) of [FIM]; in fact, as
it is easily seen, these schemas are equivalent (modulo the
pairing) over both {\bf WKV} and {\bf M}. Using the method of the
addition of the recursor constant rec to {\bf M}, all constants of
{\bf M} can, successively according to their primitive recursive
descriptions, be added definitionally to {\bf WKV}. So, if we
consider the trivial extension {\bf M$^j$} of {\bf M} by the
pairing and projections of {\bf WKV} with their axioms, we
conclude that the two systems are essentially equivalent.

\begin{theorem}
$\mathrm{{\bf M^j}}$ is a definitional extension of
$\,\mathrm{{\bf WKV}}$.
\end{theorem}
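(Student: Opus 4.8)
The plan is to present $\mathrm{{\bf M^j}}$ as the summit of a chain of definitional extensions rising from $\mathrm{{\bf WKV}}$, reusing verbatim the two tools already built: the addition of a recursor (Section 4) and the successive addition of primitive recursive constants once a recursor is available (described at the start of Section 5). The only inputs that the Section 4 argument actually consumes are $\mathrm{AC_{00}!}$, $\lambda$-conversion, the replacement theorem, and a theorem playing the role of Lemma 5.3(b) of \cite{FIM}. I would first check that $\mathrm{{\bf WKV}}$ supplies all four: it assumes $\mathrm{AC_{00}!}$ and has $\lambda$-abstraction outright, its equality axioms yield replacement, and its primitive recursion schema $\exists\beta\,[\,\beta(0)=t ~\&~ \forall y\,\beta(S(y))=r(j(y,\beta(y)))\,]$ is, as already observed in the excerpt, equivalent (modulo the pairing $\mathrm{j}$) to Lemma 5.3(b). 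Hence the whole development of Lemmas 4.1--4.11, with $\mathrm{j}$ in place of $\langle\cdot,\cdot\rangle$ and $\mathrm{S}$ in place of $'$, goes through unchanged, so that $\mathrm{{\bf WKV}} + \mathrm{Rec}$ is a definitional extension of $\mathrm{{\bf WKV}}$.

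Second, with the recursor constant now present I would add the remaining function(al) constants $\mathrm{f_4},\dots,\mathrm{f_{26}}$ of $\mathrm{{\bf M}}$ one at a time, exactly as in Section 5. The initial functions $0,\mathrm{S},+,\cdot$ and the pairing and projection constants $\mathrm{j},\mathrm{j_1},\mathrm{j_2}$ are already primitive in $\mathrm{{\bf WKV}}$, and every $\mathrm{f_i}$ has a primitive recursive description using only constants named earlier in the list; so each is introduced by an explicit term (by composition, and by $\mathrm{rec}$ for the recursion clauses), with the trivial translation replacing the new constant by that term. As noted in Section 5, this works for the functional constants of $\mathrm{{\bf M}}$ as well as the function constants. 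Each step is definitional and a composite of definitional extensions is again definitional, so if I write $\mathrm{{\bf W^*}}$ for $\mathrm{{\bf WKV}} + \mathrm{Rec}$ together with $\mathrm{f_4},\dots,\mathrm{f_{26}}$, then $\mathrm{{\bf W^*}}$ is a definitional extension of $\mathrm{{\bf WKV}}$.

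Third, I would identify $\mathrm{{\bf W^*}}$ with $\mathrm{{\bf M^j}} + \mathrm{Rec}$ up to trivial notational differences ($\mathrm{S}$ versus $'$, and $\mathrm{{\bf WKV}}$'s pairing axioms versus those of $\mathrm{{\bf M^j}}$): both carry all of $\mathrm{{\bf M}}$'s constants together with $\mathrm{j},\mathrm{j_1},\mathrm{j_2}$, $\lambda$, and $\mathrm{rec}$. Since $\mathrm{rec}$ is itself eliminable, the Section 4 argument applied with $\mathrm{{\bf M^j}}$ in the role of $\mathrm{{\bf S_1}}$ shows $\mathrm{{\bf W^*}} = \mathrm{{\bf M^j}} + \mathrm{Rec}$ is a definitional, hence conservative, extension of $\mathrm{{\bf M^j}}$. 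Now $\mathrm{{\bf M^j}}$ sits between $\mathrm{{\bf WKV}}$ and $\mathrm{{\bf W^*}}$ in language, so I would obtain the translation witnessing the theorem by restricting the $\mathrm{{\bf W^*}}$-to-$\mathrm{{\bf WKV}}$ translation to formulas of $\mathrm{{\bf M^j}}$: elimination relations I, III and IV are inherited directly, and II follows from the conservativity of $\mathrm{{\bf W^*}}$ over $\mathrm{{\bf M^j}}$, since the equivalence of a formula of $\mathrm{{\bf M^j}}$ with its translation, being provable in $\mathrm{{\bf W^*}}$, is already provable in $\mathrm{{\bf M^j}}$. This gives that $\mathrm{{\bf M^j}}$ is a definitional extension of $\mathrm{{\bf WKV}}$.

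I expect the main obstacle to be the two reconciliation points hidden in the phrase ``up to trivial differences''. First, one must check that the recursion schema of $\mathrm{{\bf WKV}}$ genuinely substitutes for Lemma 5.3(b) of \cite{FIM} in every place the Section 4 lemmas invoke it, in particular in Lemmas 4.3 and 4.4, where uniqueness and the functional cases $\mathrm{^*189_f^F}$, $\mathrm{^*190_f^F}$ of Lemma 25 of \cite{IM} are used; this requires that $\mathrm{j}$ and the prime-power pairing of $\mathrm{{\bf M^j}}$ be interderivable, which holds because each is primitive recursive and onto with primitive recursive inverses, hence definable and provably correct in the common extension. Second, the transfer of a definitional extension ``through'' the intermediate theory $\mathrm{{\bf M^j}}$ must be justified by the conservativity bookkeeping indicated above rather than taken for granted. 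Everything else is the routine, already-verified machinery of Sections 4 and 5.
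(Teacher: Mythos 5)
Your proposal is correct and follows essentially the same route as the paper, which only sketches this theorem: the paper likewise observes that $\mathrm{\bf WKV}$ supplies $\mathrm{AC_{00}!}$, $\lambda$-abstraction and a primitive recursion schema equivalent (modulo the pairing) to Lemma 5.3(b) of [FIM], and then invokes the Section 4 method to add the constants of $\mathrm{\bf M}$ definitionally to $\mathrm{\bf WKV}$, landing on $\mathrm{{\bf M^j}}$ up to trivial differences. The only organizational difference is that the paper applies that method to each constant $\mathrm{f}_i$ directly (each introduced, like rec, by a formula of provable functional character), so it never passes through $\mathrm{{\bf M^j}}+\mathrm{Rec}$ and needs no final descent from your $\mathrm{{\bf W^*}}$ to $\mathrm{{\bf M^j}}$; your sandwiching argument for that descent is nevertheless sound. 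One small slip in a side remark: the prime-power pairing $\mathrm{\langle x, y\rangle}$ of $\mathrm{\bf M}$ is not onto the natural numbers (unlike j), but ontoness plays no role in the interderivability you need --- only that each pairing and its projections are primitive recursive and hence available in the common extension.
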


\section{Concluding observations}

\subsection{More on {\bf BIM} and {\bf H}}

The following clarify, among other things, the relationship
between {\bf EL} and {\bf BIM}.

\textsc{Observation 1.} In the presence of enough function
constants in {\bf BIM} without the axiom of unbounded search and
in {\bf H}, the single minimal axiom of countable choice is
equivalent to QF-AC$_{00}$. To see this, consider the description
of {\bf BIM} in \cite{Veldman3}. What follows holds also for {\bf
H}. As it is noted, thanks to the presence of the pairing
function, the binary, ternary etc. functions can be treated as
unary. So we can consider (definitional) extensions of {\bf BIM}
by only unary function constants, expressing primitive recursive
functions, introduced as functors in the formalism (see below the
treatment of the recursor R); then, since the only non-unary
function constant is the binary constant J for the pairing
function, for which we get from the axioms $(*)\;\mathrm{\forall
\alpha \forall \beta \exists \gamma \forall n \gamma (n) =
J(\alpha (n) , \beta (n))}$, we have:

(a) For every term t with free number variables say for simplicity
the x, y, we can prove $\mathrm{ \;  \exists \alpha (\alpha(x,y) =
t (x,y))}$ (for t(x)$\equiv$x consider $\alpha$ with
$\mathrm{\alpha (x)=J(K(x), L(x))}$, justified by $(*)$ and the
axioms for J, K, L, or the identity function, absent from the
earlier versions of the system).

If an extension as above contains a sufficient supply of primitive
recursive functions, essentially like in \cite{FIM}, pp. 27, 30,
we have:

(b) For any quantifier-free formula A(x, y, z) with free number
variables the indicated three (for example), we can prove
$\mathrm{\;\exists \gamma \forall x \forall y \forall z
\,(A(x,y,z) \leftrightarrow \gamma (x,y,z) = 0)}$.

Moreover:

(c) By formal induction on z, we can prove $\mathrm{\forall z
\exists \beta \forall n \beta (n) =z}$.

(d) Given a (number) z and a (function) $\gamma$, we may assume,
by (c), $\mathrm{\forall n \beta (n) =z}$ and define
$\mathrm{\delta (w) = \gamma (K(w), L(w), z)=\gamma (K(w), L(w),
\beta (w))}$, using $(*)$.

Now let $\mathrm{\forall x \exists y A(x,y,z)}$ be the hypothesis
of an instance of QF-AC$_{00}$. By the above facts we get a
$\delta$ such that $\mathrm{A(x,y,z) \leftrightarrow \delta (x,y)
= 0}$, then get $\mathrm{\forall x \exists y \delta (x,y)=0}$, and
applying the minimal axiom of countable choice to this get
$\mathrm{\exists \alpha \forall x \delta (x, \alpha (x)) =0}$ and
finally $\mathrm{\exists \alpha \forall x A(x, \alpha (x), z)}$.

\textsc{Observation 2.} We can add definitionally to {\bf H} and
{\bf BIM} (and in their extensions as above, but in this case the
previous additions become redundant) a recursor constant: we add a
constant R together with the formation rule ``if t is a term and u
a functor, then R(t,u) is a functor'' and defining axiom $\mathrm{
A( x, \alpha , R(x, \alpha ))}$, where $\mathrm{A(x, \alpha ,
\beta )}$ is the formula
$$
\mathrm{ \forall y \forall z [\,\beta (y,0) = x ~\&~
\forall z \beta (y, S(z)) = \alpha (y, z, \beta (y, z))\,]\,}.
$$
We note that facts (a) - (d) hold in the present extension too.
Since we have $\mathrm{\vdash \exists ! \beta A(x, \alpha , \beta
)}$, we can show that this extension is definitional, using the
translation $'$ defined as follows, with terminology, notation and
conditions on the variables as in the previous cases, and with use
of functional versions of results from \cite{IM}, see discussion
before Lemma 4.7.\footnote{For the treatment of definitional
extensions by the introduction of functors in the formalism, see
\cite{Kreisel-Troelstra}. Note that the system called {\bf EL}
(and its subsystem {\bf EL}$_0$) in that work, assumes
AC$_{00}!$.} For each prime formula E without R, E$'$ is E.
Otherwise, if R(t,u) is the first occurrence of an R-plain functor
in E, then
$$
\mathrm{ E' \equiv \exists \beta \,[\, A(t, u, \beta ) ~\&~
[\,E(\beta )\,]'\,]\,}.
$$

\textsc{Observation 3.} In Lemmas 6.1 - 6.3 (on
$\lambda$-eliminability) there is just one use of QF-AC$_{00}$,
for the basis of the inductive argument of Lemma 6.1. By the facts
(a), (d) above, this use can be avoided. So Lemmas 6.1 - 6.3 are
valid for suitable extensions (which may include the recursor R)
of {\bf BIM} and {\bf H}, allowing to add $\lambda$
definitionally.

We have now the following conclusions (note that R can be
introduced in {\bf HA$_1$} and {\bf EL} by explicit definition).

\begin{theorem}
The systems $\mathrm {{\bf H}}$ and $\mathrm{{\bf HA}_1}$ have a
common conservative extension obtained by definitional extensions,
so they are essentially equivalent.
\end {theorem}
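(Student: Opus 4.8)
The plan is to follow the template of Section~5 and the comparisons of Section~7: I would extend each of \textbf{H} and \textbf{HA$_1$} up to a common language by a chain of definitional extensions, and then observe that the two resulting systems coincide up to trivial notational differences. What makes this case simpler than the comparison of \textbf{M} and \textbf{EL} is that \emph{neither} \textbf{H} \emph{nor} \textbf{HA$_1$} \emph{assumes any genuine choice or comprehension principle} beyond the closure of the function sort under primitive recursion; so no analogue of $\mathrm{CF\!_d}$ is required, and the machinery of Observations 1--3 of 8.1 applies directly.

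First I would extend \textbf{H}. By the remarks preceding Observation~1, constants for all primitive recursive functions and relations can be added to \textbf{H} conservatively. By Observation~2, a recursor constant $R$ can then be added definitionally, the required uniqueness $\vdash \exists!\beta\,A(x,\alpha,\beta)$ following from the Axiom of Primitive Recursion of \textbf{H} together with induction and the decidability of numerical equality (Lemma~2.1). By Observation~3, Lemmas 6.1--6.3 remain valid for the resulting system, so $\lambda$, with its functor-formation rule and the schema of $\lambda$-conversion, can also be added definitionally. Call the outcome $\mathbf{H}^{+}$.

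Next I would extend \textbf{HA$_1$}. Since \textbf{HA$_1$} already contains $\lambda$, a recursor, and function constants for all primitive recursive functions, I need only add the pairing and projection constants $J, K, L$ of \textbf{H}; being primitive recursive, these are added definitionally by the method of Section~5.1 (explicit definition via $\mathrm{rec}$ and $\lambda$), and $R$ itself is introduced in \textbf{HA$_1$} by explicit definition, as noted just before the statement. Call the outcome $\mathbf{HA}_1^{+}$. Its language now agrees with that of $\mathbf{H}^{+}$, modulo the interderivable presentations of the recursor and the treatment of multi-ary functions via pairing. It then remains to check the two nontrivial directions of axiom matching. In $\mathbf{HA}_1^{+}$ the single-axiom closure conditions of \textbf{H} become provable: the witnessing functions are produced explicitly, for instance $\lambda n.\,\alpha(\beta(n))$ for composition, $\lambda n.\,J(\alpha(n),\beta(n))$ for pairing, and a suitable $\mathrm{rec}$-term for primitive recursion, with the uniqueness established as above. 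Conversely, in $\mathbf{H}^{+}$ the recursor axioms and $\lambda$-conversion hold by construction, being precisely the defining axioms of the constants just introduced, and the defining equations of the adjoined primitive recursive constants match those of \textbf{HA$_1$}.

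The main obstacle I expect is bookkeeping rather than conceptual difficulty: one must verify that the conjuncts of the closure axiom of \textbf{H} (composition, pairing, and primitive recursion with parameters) are each matched by the correct explicit $\mathrm{rec}$/$\lambda$ construction in $\mathbf{HA}_1^{+}$, with the intended pairing of arguments and the correct uniqueness, and that extending the induction schema to the common language introduces no discrepancy on either side. Once these routine verifications are complete, $\mathbf{H}^{+}$ and $\mathbf{HA}_1^{+}$ coincide up to trivial notational differences, so they form a common conservative (indeed definitional) extension of \textbf{H} and of \textbf{HA$_1$}; hence the two systems are essentially equivalent.
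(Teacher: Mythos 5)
Your proposal is correct and follows essentially the route the paper intends: the theorem is exactly the combination of Observations 1--3 of Section 8.1 (conservative addition of primitive recursive constants to \textbf{H}, definitional addition of the recursor $R$ and then of $\lambda$, with the lone use of $\mathrm{QF\text{-}AC_{00}}$ in Lemma 6.1 replaced by facts (a) and (d)) together with the remark that $R$ is explicitly definable in $\mathrm{\bf HA_1}$ and that $\mathrm{J, K, L}$ can be added definitionally on the other side. The axiom-matching you describe, including the explicit $\lambda$/$\mathrm{rec}$ witnesses for the closure conjuncts of \textbf{H}, is precisely the routine verification the paper leaves implicit.
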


\begin{theorem}
The systems $\mathrm {{\bf BIM}}$ and $\mathrm{{\bf EL}}$ have a
common conservative extension obtained by definitional extensions,
so they are essentially equivalent.
\end{theorem}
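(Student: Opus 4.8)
The plan is to reduce Theorem 8.6 to the essential equivalence of $\mathrm{\bf H}$ and $\mathrm{\bf HA_1}$ already supplied by Theorem 8.5, by showing that the \emph{only} extra commitments separating $\mathrm{\bf BIM}$ from $\mathrm{\bf H}$ and $\mathrm{\bf EL}$ from $\mathrm{\bf HA_1}$ — the axiom of unbounded search and the schema $\mathrm{QF\text{-}AC_{00}}$ — become interderivable once we pass to the common definitional extension. First I would invoke Theorem 8.5 to fix a single system $D$, in a common language obtained by adjoining definitionally the recursor, $\lambda$-abstraction, constants for the primitive recursive functions, and the pairing constants $\mathrm{J, K, L}$ (as in $\mathrm{\bf M^j}$), which is a definitional extension of $\mathrm{\bf H}$ and, up to trivial notational differences, a definitional extension of $\mathrm{\bf HA_1}$. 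The point to keep in view is that neither $\mathrm{\bf H}$ nor $\mathrm{\bf HA_1}$ carries any choice or search principle, so $D$ is genuinely neutral; $\mathrm{\bf BIM}$ is $\mathrm{\bf H}$ together with the single axiom of unbounded search, and $\mathrm{\bf EL}$ is $\mathrm{\bf HA_1}$ together with $\mathrm{QF\text{-}AC_{00}}$.

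The key step is to prove that over $D$ the axiom of unbounded search and $\mathrm{QF\text{-}AC_{00}}$ are interderivable. One direction is already at hand: unbounded search entails the minimal axiom of countable choice (as noted in the description of $\mathrm{\bf BIM}$), and by Observation 1 of 8.1 the latter is equivalent to $\mathrm{QF\text{-}AC_{00}}$ in the presence of enough function constants, which $D$ provides. For the converse I would argue directly: given $\alpha$ with $\forall m \exists n\, \alpha(\langle m,n\rangle)=0$, the matrix $\alpha(\langle m,n\rangle)=0$ is decidable by Lemma 2.3, so by the Fact recalled in the proof of Theorem 3.4 the least-witness condition $\alpha(\langle m,n\rangle)=0 ~\&~ \forall k<n\, \alpha(\langle m,k\rangle)\neq 0$ is provably equivalent to an equation $q=0$; applying $\mathrm{QF\text{-}AC_{00}}$ to $\forall m \exists n\,(q=0)$ yields a $\gamma$ witnessing the full unbounded-search conclusion, including its minimality clause. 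Hence $D + (\text{unbounded search})$ and $D + \mathrm{QF\text{-}AC_{00}}$ coincide, and I shall take this as the candidate common extension.

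Finally I would verify that this system is a common definitional extension of both $\mathrm{\bf BIM}$ and $\mathrm{\bf EL}$. On the $\mathrm{\bf BIM}$ side this is immediate: unbounded search is a single $\forall\alpha$-axiom containing neither the recursor nor $\lambda$, so its principal transform under the Theorem 8.5 translation over $\mathrm{\bf H}$ is itself, and adjoining the same axiom to base and extension leaves all four elimination relations intact; thus $D + (\text{unbounded search})$ is a definitional extension of $\mathrm{\bf H}+(\text{unbounded search})=\mathrm{\bf BIM}$. On the $\mathrm{\bf EL}$ side I would use the translation of $D$ over $\mathrm{\bf HA_1}$: with $\mathrm{J}$ identified with the primitive recursive pairing, the transform of unbounded search is again (a congruent of) unbounded search, a formula of $\mathcal{L}(\mathrm{\bf HA_1})$ which, by the converse direction above, satisfies $\vdash_{\mathrm{\bf EL}}(\text{unbounded search})$. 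Consequently every axiom of $D+(\text{unbounded search})$ transforms to a theorem of $\mathrm{\bf EL}$, so elimination relation III holds and the system is a definitional extension of $\mathrm{\bf EL}$. Exhibiting one system that is a definitional — hence conservative — extension of each then gives the essential equivalence of $\mathrm{\bf BIM}$ and $\mathrm{\bf EL}$.

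I expect the main obstacle to be the bookkeeping of the last paragraph: making sure that adjoining a function-existence principle does not disturb the elimination relations of the Theorem 8.5 translation. The delicate case is that the richer-language instances of the \emph{schema} $\mathrm{QF\text{-}AC_{00}}$ have quantifier-free matrices that may mention the recursor and $\lambda$, so their transforms are a priori not $\mathrm{QF\text{-}AC_{00}}$-instances of $\mathrm{\bf HA_1}$. This is precisely why I would base the common system on the \emph{single} unbounded-search axiom rather than on the schema: being transform-invariant and provable in $\mathrm{\bf EL}$, it sidesteps the schema-versus-transform mismatch, and the interderivability of the second paragraph then lets me recover $\mathrm{QF\text{-}AC_{00}}$ on the nose.
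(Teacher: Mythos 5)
Your proposal is correct and follows essentially the same route as the paper, which obtains this theorem from Observations 1--3 of 8.1 (definitional addition of function constants, the recursor and $\lambda$ to {\bf BIM}/{\bf H}, plus the equivalence of the minimal axiom of countable choice with $\mathrm{QF\text{-}AC_{00}}$ in the presence of enough constants) together with Theorem 8.5. Your explicit derivation of the \emph{full} unbounded-search axiom (minimality clause included) from $\mathrm{QF\text{-}AC_{00}}$ via the decidable least-witness formula, and your choice to anchor the common system on the single axiom to avoid the schema-versus-transform mismatch, are sound refinements of details the paper leaves implicit.
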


\subsection{Additional results}

In proving that the recursor constant rec can be added
definitionally to {\bf M}, AC$_{00}$! has been used in two cases
(except from the point where it is shown that the translation of
AC$_{00}$! itself is a theorem of {\bf M}): in the proof of Lemma
5.3(b) of [FIM], and in the proof of Lemma 4.4. We will prove
these two lemmas by using only QF-AC$_{00}$. In this way, together
with the observation that  QF-AC$_{00}$ is equivalent over {\bf
IA$_1$} with the single axiom
$$
\mathrm{\forall \alpha
\,[\,\forall x \exists y\, \alpha (\langle x, y\rangle)=0
\rightarrow \exists\gamma \forall x\, \alpha (\langle x, \gamma
(x)\rangle) = 0 \,]\,},
$$
we obtain the fact that rec can be added definitionally  to {\bf
IA$_1$} + QF-AC$_{00}$.

\begin{lemma}
In $\,\mathrm{{\bf IA_1} + QF\text{-}AC_{00}}$,
$$
\mathrm{\vdash  \exists  \beta  \,[\, \beta (0) = x ~\&~
\forall z \,\beta (z')=\alpha (\langle \beta (z),z \rangle
)\,]\,}.
$$
\end{lemma}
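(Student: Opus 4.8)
The plan is to represent the desired function $\beta$ by its finite \emph{computation histories} and to recover it by a single application of $\mathrm{QF\text{-}AC_{00}}$, exploiting the fact that — unlike in the proof of Lemma 5.3(b) of \cite{FIM}, which used $\mathrm{AC_{00}!}$ — the relevant existence statement concerns a \emph{decidable} predicate. Let $\mathrm{Seq}(s,z)$ be the formula (writing $(s)_i$ for the $i$-th decoded entry in the length-respecting sequence coding of \cite{FIM})
$$\mathrm{lh(s) = z' ~\&~ (s)_0 = x ~\&~ \forall i < z\,[\,(s)_{i'} = \alpha(\langle (s)_i, i \rangle)\,]},$$
asserting that $s$ codes a sequence $\langle (s)_0, \ldots, (s)_z \rangle$ which is a correct history of the recursion up to stage $z$. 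Since $\alpha$ occurs only as a free function variable applied to terms and the only quantifier is bounded, $\mathrm{Seq}(s,z)$ has only bounded number quantifiers and no function quantifiers, so by Lemma 2.3 it is decidable, and by the Fact in the proof of Theorem 3.4 there is a term $c(z,s)$ of $\mathrm{\bf IA_1}$ with $\vdash c(z,s) \leq 1$ and $\vdash \mathrm{Seq}(s,z) \leftrightarrow c(z,s) = 0$.

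First I would establish existence of histories, $\mathrm{\vdash \forall z \exists s\, Seq(s,z)}$, by induction on $z$: for the basis take $s = \langle x \rangle$, and for the step, given $s$ with $\mathrm{Seq}(s,z)$, the code $\mathrm{s * \langle \alpha(\langle (s)_z, z\rangle)\rangle}$ obtained by appending one entry satisfies $\mathrm{Seq}(\cdot, z')$, using only the provable properties of $*$ and $(\cdot)_i$ from \cite{FIM}. Rewriting the conclusion as $\mathrm{\forall z \exists s\, c(z,s) = 0}$ and applying $\mathrm{QF\text{-}AC_{00}}$ then yields a function $\sigma$ with $\mathrm{\vdash \forall z\, c(z,\sigma(z)) = 0}$, i.e. $\mathrm{\forall z\, Seq(\sigma(z), z)}$. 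This is the crux: because the choice is made over the \emph{quantifier-free} matrix $c(z,s) = 0$, numerical choice suffices where \cite{FIM} needed unique choice.

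Next I would set $\mathrm{\beta \equiv \lambda z.(\sigma(z))_z}$ and verify the two recursion equations; the one nontrivial ingredient is a coherence lemma guaranteeing that the separately chosen histories agree where they overlap. Writing $\mathrm{Agree}(s,z)$ for the decidable formula $\mathrm{(s)_0 = x ~\&~ \forall i < z\,[\,(s)_{i'} = \alpha(\langle (s)_i, i\rangle)\,]}$, which drops the length clause of $\mathrm{Seq}$ and is implied by $\mathrm{Seq}(s,w)$ for every $w \geq z$, I would prove by induction on $z$ that
$$\mathrm{\vdash \forall s \forall s'\,[\,Agree(s,z) ~\&~ Agree(s',z) \rightarrow \forall i \leq z\,(s)_i = (s')_i\,]};$$
the basis is immediate from $(s)_0 = x = (s')_0$, and in the step the recursion clause at $i = z$ propagates agreement from $\leq z$ to $\leq z'$. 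Since $\mathrm{Seq}(\sigma(z),z)$ gives $\mathrm{Agree}(\sigma(z),z)$ and $\mathrm{Seq}(\sigma(z'),z')$ gives $\mathrm{Agree}(\sigma(z'),z)$, this yields the coherence identity $\mathrm{(\sigma(z'))_z = (\sigma(z))_z = \beta(z)}$.

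Finally, by $\lambda$-conversion with $\mathrm{Seq}(\sigma(0),0)$ we obtain $\mathrm{\beta(0) = (\sigma(0))_0 = x}$, and the recursion clause of $\mathrm{Seq}(\sigma(z'),z')$ at $i = z$ gives $\mathrm{\beta(z') = (\sigma(z'))_{z'} = \alpha(\langle (\sigma(z'))_z, z\rangle) = \alpha(\langle \beta(z), z\rangle)}$ after substituting the coherence identity; $\exists\beta$-introduction then delivers the claim. I expect the main obstacle to be purely organizational rather than conceptual: keeping the coding apparatus ($\mathrm{lh}$, $(\cdot)_i$, $*$, $\lambda$) and the bounded-quantifier bookkeeping of the decidability reduction straight, since the genuinely new idea — that $\mathrm{Seq}$ is decidable, so that $\mathrm{QF\text{-}AC_{00}}$ replaces $\mathrm{AC_{00}!}$ — is confined to the single application of numerical choice.
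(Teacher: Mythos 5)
Your proposal is correct and follows essentially the same route as the paper's proof: both code finite computation histories by a predicate ($\mathrm{Seq}$ versus the paper's $\mathrm{P(x,\alpha,y,v)}$) that is provably equivalent to a quantifier-free formula, establish existence of histories by induction, apply $\mathrm{QF\text{-}AC_{00}}$ to obtain a function enumerating them, set $\beta = \lambda z.(\sigma(z))_z$, and verify the recursion equations via a coherence induction showing overlapping histories agree. The only differences (your extra length clause in $\mathrm{Seq}$ and your slightly more general two-history agreement lemma in place of the paper's $\forall z\,\forall i\leq z\;\gamma(z')_i=\gamma(z)_i$) are cosmetic.
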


\begin{proof}
We slightly modify the proof of Lemma 5.3(b) of [FIM] as follows.
Let
$$
\mathrm{P(x, \alpha , y, v)\equiv (v)_0 = x  ~\&~ \forall i < y
\,(v)_{i'} = \alpha (\langle (v)_i, i \rangle)}.
$$
By formal induction (IND) on y we show first $\mathrm{(a)\;\;
\vdash \forall y \exists v\, P(x, \alpha , y, v)}$ as follows:

\textsc{Basis.} We get $\mathrm{\exists v\, P(x, \alpha , 0,v)}$
by ``setting'' $\mathrm{v = p_0^x}$.

\textsc{Ind. step.} Assuming $\mathrm{ P(x, \alpha , y, w)}$ and
``setting'' $\mathrm{ v = \Pi_{i\leq y} p_i^{(w)_i}*p_{y'}^
{\alpha (\langle (w)_y, y \rangle )}},$ since then
$$
\mathrm{\forall i\leq y \;(v)_i = (w)_i ~\&~
(v)_{y'} = \alpha (\langle (w)_y, y \rangle )},
$$
we get $\mathrm{\exists v\, P(x, \alpha , y', v)}$ from the
inductive hypothesis.

Using f$_{15}$, $\mathrm{P(x, \alpha , y, v)}$ is equivalent over
{\bf IA$_1$} to a quantifier-free formula (with the same free
variables). So we can apply QF-AC$_{00}$ to (a) and get
$$
\mathrm{(b)\;\; \exists \gamma \forall y \, P(x, \alpha , y,
\gamma (y))}.
$$
Assume now $\mathrm{(c)\;\;  \forall y \, P(x, \alpha , y, \gamma
(y))}.$ We ``define'' now
$$
\mathrm{\beta = \lambda y. \gamma (y)_y}
$$
(justified by Lemma 5.3(a) of [FIM]). Then we can show
$$
\mathrm{(d)\;\;\exists  \beta  \,[\, \beta (0) = x ~\&~
\forall z \,\beta (z')=\alpha (\langle \beta (z),z \rangle
)\,]\,}
$$
(for the second conjunct in (d), specialize from (c) for z$'$ and
$\mathrm{i = z \;(z< z')}$, and get $\mathrm{ \gamma (z')_{z'} =
\alpha (\langle \gamma (z')_z,z \rangle )}$. But $\mathrm{ \gamma
(z')_z = \gamma (z)_z}$ (by formal induction (IND) on z, using
(c), we can show $\mathrm{\forall z \forall i \leq z \, \gamma
(z')_i = \gamma (z)_i}$), so $\mathrm{ \beta (z') = \alpha
(\langle \beta (z),z \rangle )}$).
\end{proof}

\begin{lemma}
In $\,\mathrm{{\bf IA_1} + QF\text{-}AC_{00}}$,
$$
\mathrm{\vdash  \exists ! \beta  \,[\, \beta (0) = x ~\&~
\forall z \,\beta (z')=\alpha (\langle \beta (z),z \rangle
)\,]\,}.
$$
\end{lemma}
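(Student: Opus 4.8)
The plan is to obtain existence directly from Lemma 8.1 and to establish uniqueness by a routine formal induction; the point worth noting is that, in contrast to existence, the uniqueness half needs no choice principle at all.

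Writing $\mathrm{C(\beta)}$ for the recursion condition $\mathrm{\beta(0)=x \;\&\; \forall z\,\beta(z')=\alpha(\langle\beta(z),z\rangle)}$, Lemma 8.1 gives $\mathrm{\vdash \exists\beta\,C(\beta)}$. To conclude $\mathrm{\exists!\beta\,C(\beta)}$ (in the sense of the unique existential function quantifier introduced earlier), I would fix a witness $\beta$ with $\mathrm{C(\beta)}$ and then establish $\mathrm{\forall\gamma\,(C(\gamma)\rightarrow\beta=\gamma)}$; combining these by $\&$-introduction and $\exists\beta$-introduction yields the claim.

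For the uniqueness step I would assume $\mathrm{C(\gamma)}$ and prove $\mathrm{\forall z\,\beta(z)=\gamma(z)}$ by IND on $z$, after which the extensional definition of function equality gives $\beta=\gamma$. The basis $\mathrm{\beta(0)=\gamma(0)}$ is immediate from the first conjuncts of $\mathrm{C(\beta)}$ and $\mathrm{C(\gamma)}$, both being $\mathrm{x}$. For the inductive step, from the induction hypothesis $\mathrm{\beta(z)=\gamma(z)}$ I would apply the equality axiom for the function variable $\alpha$ (together with replacement for the pairing term) to obtain $\mathrm{\alpha(\langle\beta(z),z\rangle)=\alpha(\langle\gamma(z),z\rangle)}$; the second conjuncts of $\mathrm{C(\beta)}$ and $\mathrm{C(\gamma)}$ then give $\mathrm{\beta(z')=\gamma(z')}$, completing the induction.

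There is no genuine obstacle here: the uniqueness argument is a purely equational induction, using only IND and the equality axioms of $\mathrm{\bf IA_1}$, and invokes no instance of $\mathrm{AC_{00}!}$. The sole appeal to the weaker principle $\mathrm{QF\text{-}AC_{00}}$ is already absorbed in Lemma 8.1, so the unique-existence statement holds in $\mathrm{{\bf IA_1}+QF\text{-}AC_{00}}$ as claimed.
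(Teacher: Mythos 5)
Your proof is correct and is precisely the argument the paper intends: the paper states this lemma without proof, treating it as an immediate consequence of the preceding existence lemma together with exactly the routine uniqueness induction you spell out (deriving $\mathrm{\forall z\,\beta(z)=\gamma(z)}$ by IND using the equality axiom for function variables, then invoking the extensional definition of function equality), just as with the analogous Lemma 4.1 earlier. Your observation that uniqueness uses no choice principle, so that $\mathrm{QF\text{-}AC_{00}}$ enters only through the existence half, is also accurate.
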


In the next lemma  the notation and abbreviations are as in Lemma
4.4.

\begin{lemma}
Let $\mathrm{t, s}$ be terms and $\mathrm{u}$ a functor of
$\,\mathrm{{\bf IA_1}}$. Let $\mathrm{x_0, \ldots , x}_k$ include
all the number variables occurring free in $\mathrm{t, u
\;\text{or} \;s}$, let $\mathrm{w}$ and $\mathrm{v}$ be distinct
number variables not occurring in $\mathrm{t, u, s}$,  and
$\gamma$ a function variable free for $\mathrm{v}$ in
$\mathrm{A(t^w, u^w, s^w, v)}$, not occurring free in
$\mathrm{A(t^w, u^w, s^w, v)}$. Then in $\,\mathrm{{\bf IA_1} +
QF\text{-}AC_{00}}$
$$
\mathrm{\vdash  \exists ! \gamma \forall w A(t^w,
u^w, s^w, \gamma (w))}.
$$
\end{lemma}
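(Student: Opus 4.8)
The plan is to follow the proof of Lemma 4.4 verbatim up to its final step, where the appeal to $\mathrm{AC_{00}!}$ is the only obstacle, and to replace that single step by an explicit construction using $\mathrm{QF\text{-}AC_{00}}$ together with Lemmas 8.3 and 8.4. First I would dispatch the uniqueness half, which is cheap. From Lemma 8.4 one obtains, exactly as Lemma 4.2 is obtained from Lemma 4.1, the statement $\mathrm{\vdash \forall y \exists ! m\, A(x, \alpha , y, m)}$; specializing for $\mathrm{t, u, s}$ and then for $\mathrm{(w)_0, \ldots , (w)}_k$ as in Lemma 4.4 gives $\mathrm{\vdash \forall w \exists ! z\, A(t^w, u^w, s^w, z)}$. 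This immediately yields uniqueness of $\gamma$: if $\gamma$ and $\gamma '$ both satisfy $\mathrm{\forall w\, A(t^w, u^w, s^w, \cdot\,(w))}$, then $\mathrm{\forall w\, \gamma (w) = \gamma '(w)}$, i.e. $\gamma = \gamma '$ extensionally.

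The real work is to produce such a $\gamma$ without $\mathrm{AC_{00}!}$. Here I would reuse the predicate
$$
\mathrm{P(x, \alpha , y, v) \equiv (v)_0 = x ~\&~ \forall i < y\, (v)_{i'} = \alpha (\langle (v)_i, i\rangle )}
$$
from the proof of Lemma 8.3. Taking the universal closure of step (a) of that proof, $\mathrm{\vdash \forall x \forall \alpha \forall y \exists v\, P(x, \alpha , y, v)}$, and substituting $\mathrm{x := t,\ \alpha := u,\ y := s}$ and then $\mathrm{x}_i := \mathrm{(w)}_i$, I get $\mathrm{\vdash \forall w \exists v\, P(t^w, u^w, s^w, v)}$. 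Since $\mathrm{P(t^w, u^w, s^w, v)}$ is equivalent over $\mathrm{\bf IA_1}$ (via $\mathrm{f_{15}}$, which removes the bounded quantifier) to a quantifier-free formula in the variables $\mathrm{w, v}$, I may apply $\mathrm{QF\text{-}AC_{00}}$ to obtain $\mathrm{\vdash \exists \delta \forall w\, P(t^w, u^w, s^w, \delta (w))}$. Assuming $\mathrm{\forall w\, P(t^w, u^w, s^w, \delta (w))}$, I set $\gamma \equiv \mathrm{\lambda w.\, (\delta (w))_{s^w}}$, so that by $\lambda$-conversion $\mathrm{\gamma (w) = (\delta (w))_{s^w}}$ is the recursion value at the bound $\mathrm{s^w}$.

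It then remains to verify $\mathrm{\forall w\, A(t^w, u^w, s^w, \gamma (w))}$. For a generic $w$, Lemma 8.3 applied with parameters $\mathrm{t^w, u^w}$ furnishes a full recursion function $\beta$ with $\mathrm{\beta (0) = t^w}$ and $\mathrm{\forall z\, \beta (z') = u^w(\langle \beta (z), z\rangle )}$; a formal induction on $\mathrm{j}$ then shows $\mathrm{\forall j \leq s^w\, (\delta (w))_j = \beta (j)}$, using $\mathrm{(\delta (w))_0 = t^w = \beta (0)}$ for the basis and the two recursion clauses (the one from $\mathrm{P}$, valid for $\mathrm{i < s^w}$, and the one for $\beta$) for the step. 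In particular $\mathrm{\gamma (w) = (\delta (w))_{s^w} = \beta (s^w)}$, so this $\beta$ witnesses $\mathrm{A(t^w, u^w, s^w, \gamma (w))}$; generalizing on $w$, then $\exists \gamma$-introducing and discharging the hypothesis on $\delta$ by $\exists \delta$-elimination, yields $\mathrm{\vdash \exists \gamma \forall w\, A(t^w, u^w, s^w, \gamma (w))}$. Combined with the uniqueness above this gives the claim $\mathrm{\vdash \exists ! \gamma \forall w\, A(t^w, u^w, s^w, \gamma (w))}$. I expect the reconciliation of the bounded code $\mathrm{\delta (w)}$ with the full recursion function $\beta$ to be the main point: it is precisely this induction, exploiting the recursive structure of $\mathrm{A}$ rather than any choice principle for the non-quantifier-free predicate $\mathrm{A}$ itself, that lets $\mathrm{QF\text{-}AC_{00}}$ do the job $\mathrm{AC_{00}!}$ did in Lemma 4.4.
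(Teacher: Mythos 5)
Your proposal is correct and follows essentially the same route as the paper's proof: both reuse the predicate $\mathrm{P}$ from Lemma 8.3, apply $\mathrm{QF\text{-}AC_{00}}$ to $\mathrm{\forall w \exists v\, P(t^w, u^w, s^w, v)}$ to get $\delta$, set $\gamma = \lambda \mathrm{w}.(\delta(\mathrm{w}))_{\mathrm{s^w}}$, and reconcile the bounded code with the full recursion function by a formal induction up to $\mathrm{s^w}$. Your uniqueness argument via $\mathrm{\forall w \exists ! z\, A(t^w, u^w, s^w, z)}$ is a cosmetic variant of the paper's step (e), resting on the same appeal to Lemma 8.4.
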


\begin{proof}
We want to show
$$
\mathrm{\vdash  \exists ! \gamma \forall w \exists \beta
\,[\,\beta (0) = t^w ~\&~ \forall z \,\beta (z')=(u^w) (\langle
\beta (z),z \rangle ) ~\&~ \beta (s^w) = \gamma (w)) \,]\,}.
$$
From (a) in the proof of Lemma 8.3 we have $\mathrm{\vdash \forall
w \exists v\, P(t^w, u^w , s^w, v)}$. Applying QF-AC$_{00}$ we get
$\mathrm{(a)\;\; \vdash \exists \delta \forall w \,P(t^w, u^w ,
s^w, \delta (w))}$. Assume
$$
\mathrm{(a1)\;\; \forall w \,[\,(\delta (w))_0 = t^w
~\&~ \forall i < s^w (\delta (w))_{i'} = (u^w) (\langle (\delta
(w))_i, i \rangle)\,]\,},
$$
and let $\mathrm{(*)\;\;\gamma = \lambda w. (\delta (w))_{s^w}}.$
By (a1), we get
$$
\mathrm{(a2)\;\; \,(\delta (w))_0 = t^w
~\&~ \forall i < s^w (\delta (w))_{i'} = (u^w) (\langle (\delta
(w))_i, i \rangle)}.
$$
From Lemma 8.4,
$$
\mathrm{(b)\;\;\vdash  \forall w \exists ! \beta  \,[\, \beta (0) = t^w ~\&~
\forall z \,\beta (z')=(u^w) (\langle \beta (z),z \rangle
)\,]\,},
$$
so we may assume
$$
\mathrm{(b1)\;\; \beta (0) = t^w ~\&~
\forall z \,\beta (z')=(u^w) (\langle \beta (z),z \rangle )}.
$$
Then we get $\mathrm{(c)\;\;\beta (s^w) = (\delta (w))_{s^w}}$ (to
get (c) we prove  $\mathrm{ \forall i \leq s^w \;\beta (i) =
(\delta (w))_i}$  by formal induction (IND) on i), so
$\mathrm{(d)\;\gamma (w) = \beta (s^w)}$. By $\rightarrow$-elim.
disch. (b1) with $\forall\beta$- and $\forall$w-introds. we get
$\mathrm{(e)\;\forall w \forall \beta \,[\,(b1) \rightarrow \gamma
(w) = \beta (s^w)\,].}$ From (b) and (e) we get
$$
\mathrm{(f)\;\;\forall w   \exists  \beta \,[\,\beta (0) = t^w
~\&~ \forall z \,\beta (z')=(u^w) (\langle \beta (z),z \rangle )
~\&~ \beta (s^w) = \gamma (w) \,]\,}.
$$
For the uniqueness, assume now
$$
\mathrm{(g)\;\;\forall w   \exists  \beta \,[\,\beta (0) = t^w
~\&~
\forall z \,\beta (z')=(u^w) (\langle \beta (z),z \rangle
) ~\&~ \beta (s^w) = \varepsilon (w) \,]\,}.
$$
Then from (e) and (f) we get easily $\mathrm{(h)\;\forall
w\,\varepsilon (w) = \gamma (w)}.$ So from (g) we get
$$
\mathrm{(i)\; \;\varepsilon = \gamma },
$$
and then $\forall \varepsilon$ ((g) $\rightarrow$ (i)) (disch. (g)
before $\forall$-introd.). So with (f) and $\exists\gamma$-introd.
we get the lemma, after completing the
 $\exists \gamma$,  $\exists \delta$-elims. disch.
 $(*)$ and (a1), respectively.
\end{proof}

\begin{corollary}
The system $\,\mathrm{{\bf IA_1} + QF\text{-}AC_{00}\;+ \;Rec}$ is
a definitional extension of $\,\mathrm{{\bf IA_1} +
QF\text{-}AC_{00}}$.
\end{corollary}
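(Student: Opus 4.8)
The plan is to show that $\mathrm{{\bf IA_1} + QF\text{-}AC_{00} + Rec}$ is a \emph{definitional} extension of $\mathrm{{\bf IA_1} + QF\text{-}AC_{00}}$ by reusing, verbatim, the translation $\,'\,$ of Section 4 (the principal rec-less transform of Lemma 4.6) and re-establishing the elimination relations I--IV. Set $\mathrm{\bf S_2}$ to be $\mathrm{{\bf IA_1} + QF\text{-}AC_{00} + Rec}$ and $\mathrm{\bf S_1}$ to be $\mathrm{{\bf IA_1} + QF\text{-}AC_{00}}$. The key observation is that throughout Section 4 the schema $\mathrm{AC_{00}!}$ entered the arguments only through Lemma 4.1 and Lemma 4.4 (equivalently, Lemma 5.3(b) of \cite{FIM}), which also underwrite the applicability of the functional version of Lemma 25 of \cite{IM}. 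Since Lemmas 8.3, 8.4 and 8.5 above reprove exactly these facts over $\mathrm{\bf S_1}$ using only $\mathrm{QF\text{-}AC_{00}}$, the entire chain of Lemmas 4.3--4.9, 4.11, 4.12 transcribes with $\mathrm{\bf S_1}$, $\mathrm{\bf S_2}$ in the new roles. Thus elimination relations I, II, IV, Lemma 4.11 (immediate consequences), and the reduction of III to the axiom case all carry over unchanged; only the axiom case (the analogue of Lemma 4.10) needs attention.

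For that axiom case I would copy cases (i)--(iv), (vi) and (vii) of the proof of Lemma 4.10 without change: they use $\mathrm{AC_{00}!}$ only through Lemmas 4.1/4.4, now supplied by 8.3--8.5, and the $\mathrm{Rec}$ axiom and the finitely many constant-defining axioms are rec-less or handled exactly as in (vi)--(vii). Case (v), which showed that the translate of $\mathrm{AC_{00}!}$ is a theorem, is simply dropped, since $\mathrm{AC_{00}!}$ is not an axiom of $\mathrm{\bf S_1}$. In its place I must add a new case for the schema $\mathrm{QF\text{-}AC_{00}}$, and this is where the difficulty lies.

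The hard part is precisely this $\mathrm{QF\text{-}AC_{00}}$ case. For $\mathrm{E \equiv \forall x \exists y\,A(x,y) \rightarrow \exists \alpha \forall x\,A(x,\alpha(x))}$ with $\mathrm{A}$ quantifier-free, the contrast with Lemma 4.10(v) is that $\mathrm{QF\text{-}AC_{00}}$ is schematic only over quantifier-free formulas, whereas $\mathrm{A}$ may contain rec-terms; eliminating them makes $\mathrm{[A(x,y)]'}$ carry existential function quantifiers $\mathrm{\exists\gamma_i}$, so $\mathrm{E'}$ is \emph{not} itself an instance of $\mathrm{QF\text{-}AC_{00}}$ and the ``same schema'' argument breaks down. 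To repair this I would exploit uniqueness: each eliminated rec-plain term $\mathrm{rec(t_i,u_i,s_i)}$ contributes a clause $\mathrm{F_i(\gamma_i) \equiv \forall w\, A(t_i^w,u_i^w,s_i^w,\gamma_i(w))}$ which, owing to the $\,^w$-substitution, contains \emph{no} free occurrence of $\mathrm{x}$ or $\mathrm{y}$, and for which $\mathrm{\vdash_1 \exists!\gamma_i\,F_i(\gamma_i)}$ by Lemma 8.5. Using the number-quantifier cases $\mathrm{^*189_n^F}$, $\mathrm{^*190_n^F}$ of the functional version of Lemma 25 of \cite{IM}, I can pull every $\mathrm{\exists\gamma_i}$ past $\mathrm{\forall x}$ and $\mathrm{\exists y}$, reducing $\mathrm{\forall x \exists y\,[A(x,y)]'}$ in $\mathrm{\bf S_1}$ to $\mathrm{\exists \gamma_1 \cdots \exists \gamma_n\,[\,F_1(\gamma_1)~\&~\cdots~\&~F_n(\gamma_n) ~\&~ \forall x \exists y\, A^*(x,y,\gamma_1,\ldots,\gamma_n)\,]}$, where $\mathrm{A^*}$ is a genuinely quantifier-free formula of $\mathrm{\bf S_1}$ (the matrix $\mathrm{A}$ with each rec-term replaced by the corresponding $\mathrm{\gamma_i(\langle\ldots\rangle)}$), carrying the $\mathrm{\gamma_i}$ as free function parameters.

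Finally I would apply $\mathrm{QF\text{-}AC_{00}}$ of $\mathrm{\bf S_1}$ to the quantifier-free $\mathrm{A^*(x,y,\gamma_1,\ldots,\gamma_n)}$, obtaining $\mathrm{\exists\alpha\forall x\,A^*(x,\alpha(x),\gamma_1,\ldots,\gamma_n)}$, and then push the $\mathrm{\exists\gamma_i}$ back inside, using the \textsc{Fact} of Lemma 4.10(Ib) that free substitution by a rec-less term commutes with $\,'\,$ (here the substitution $\mathrm{y \mapsto \alpha(x)}$), to recover $\mathrm{\exists\alpha\forall x\,[A(x,\alpha(x))]'}$. This gives $\mathrm{\vdash_1 E'}$ in the $\mathrm{QF\text{-}AC_{00}}$ case and completes the analogue of Lemma 4.10; with Lemma 4.11 and the concluding remark of Section 4 this yields elimination relation III, so all four relations hold and $\mathrm{\bf S_2}$ is a definitional extension of $\mathrm{\bf S_1}$. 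I expect the only genuine obstacle to be the pull-out/push-in manoeuvre for $\mathrm{QF\text{-}AC_{00}}$ just described, together with its attendant substitution bookkeeping; everything else is a transcription of Section 4 with Lemmas 8.3--8.5 playing the roles of Lemmas 4.1 and 4.4.
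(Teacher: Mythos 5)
Your proposal is correct, and its skeleton (re-prove Lemma 5.3(b) of [FIM] and Lemma 4.4 from $\mathrm{QF\text{-}AC_{00}}$ alone, then transcribe the Section 4 elimination argument) is exactly the paper's; the genuine divergence is in how the $\mathrm{QF\text{-}AC_{00}}$ axiom case is handled. The paper sidesteps the schematic difficulty you identify entirely: it invokes the observation that over $\mathrm{\bf IA_1}$ the schema $\mathrm{QF\text{-}AC_{00}}$ is interderivable with the \emph{single} axiom $\mathrm{\forall \alpha\,[\,\forall x \exists y\, \alpha(\langle x,y\rangle)=0 \rightarrow \exists \gamma \forall x\, \alpha(\langle x,\gamma(x)\rangle)=0\,]}$ (via the characteristic-term \textsc{Fact} and $\lambda$-abstraction), and since that single axiom is rec-less and closed, its principal rec-less transform is itself and the axiom case is immediate --- no new work beyond cases (i)--(iv), (vi), (vii) of Lemma 4.10. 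Your route instead keeps the schema and confronts instances whose quantifier-free matrix contains rec-terms, commuting the existential function quantifiers $\exists\gamma_i$ (licensed by the uniqueness supplied by Lemma 8.5 together with $\mathrm{^*189^F}$/$\mathrm{^*190^F}$) past $\forall x\exists y$, applying $\mathrm{QF\text{-}AC_{00}}$ of $\mathrm{\bf S_1}$ to the genuinely quantifier-free matrix $\mathrm{A^*}$ with the $\gamma_i$ as function parameters, and reassembling via statement (A) of Lemma 4.10(Ib). This works --- the $F_i$ contain no free number variables after the $\,^w$-substitution, $\mathrm{QF\text{-}AC_{00}}$ tolerates free function parameters, and the substitution $\mathrm{y\mapsto\alpha(x)}$ is by a rec-less term --- but it costs you the nontrivial quantifier-permutation bookkeeping that the paper's single-axiom reformulation makes unnecessary. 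Conversely, your argument has the mild virtue of not depending on the availability of $\lambda$ (or of enough constants) to collapse the schema into one axiom, so it would survive in settings where that reformulation is less immediate.
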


\subsection{Obtained relations}

Below we  collect the main results following from the arguments we
presented. We note that II(b) justifies completely the observation
of J. Rand Moschovakis (\cite{JRM2014}, Thm. 1) that {\bf BIM},
{\bf EL} and {\bf IA$_1$} + QF-AC$_{00}$ can be used
interchangeably as a basis for intuitionistic reverse analysis.

\vskip 0.2cm

I. In relation to the small classical models that the weak
constructive systems we studied admit:

(a) The systems {\bf H}, {\bf IA$_1$}, {\bf HA$_1$}, {\bf IA$_1$}
+ Rec have a classical model consisting of primitive recursive
functions.

(b) {\bf BIM} and all the systems resulting from adding
QF-AC$_{00}$  to {\bf IA$_1$}, {\bf HA$_1$}, {\bf IA$_1$} + Rec
have a classical model consisting of general recursive functions.

(c) Adding $\mathrm{CF\!_d}$ to the systems of (b) gives stronger
systems, which do not admit small classical models consisting of
recursive functions.

\vskip 0.2cm

II. The systems of each group are proof-theoretically essentially
equivalent:

(a) {\bf IA$_1$} + Rec, {\bf HA$_1$}, {\bf H}.

(b) {\bf IA$_1$} + QF-AC$_{00}$, {\bf EL}, {\bf BIM}, {\bf H} +
\textit{Unbounded Search}.

(c) {\bf M}, {\bf WKV}, {\bf EL} + $\mathrm{CF\!_d}$, {\bf BIM} +
$\mathrm{CF\!_d}$, {\bf H} + AC$_{00}!$.

\vskip 0.2cm

III. We observe also the following.

(a) Adding QF-AC$_{00}$ to any of $\,${\bf H}, {\bf IA$_1$}, {\bf
HA$_1$} gives stronger systems.

\vskip 0.1cm

(b) Adding QF-AC$_{00}$ to any of $\,${\bf H} + $\mathrm{CF\!_d}$,
{\bf IA$_1$} + $\mathrm{CF\!_d}$, {\bf HA$_1$} + $\mathrm{CF\!_d}$
gives stronger systems (J. Rand Moschovakis, \cite{JRM-GV2012}).

\vskip 0.1cm

(c) Adding $\mathrm{CF\!_d}$ to any of $\,${\bf H}, {\bf IA$_1$},
{\bf HA$_1$}, {\bf EL}, {\bf BIM} gives stronger systems.

\end{document}